\newenvironment{enumeratei}{\begin{enumerate}[\upshape i)]}{\end{enumerate}}
\newenvironment{enumeratea}{\begin{enumerate}[\upshape a)]\setlength{\itemsep}{1ex}}{\end{enumerate}}
\newenvironment{enumerateA}{\begin{enumerate}[\upshape (A)]\setlength{\itemsep}{1ex}}{\end{enumerate}}
\newenvironment{inparaenuma}{\begin{inparaenum}[\upshape \bfseries (a) ]}{\end{inparaenum}}
\theoremstyle{plain}
\newtheorem{theorem}{Theorem}[section]
\newtheorem{lemma}[theorem]{Lemma}
\newtheorem{thm}{Theorem}[section]
\newtheorem{lem}[thm]{Lemma}
\newtheorem{cor}[thm]{Corollary}
\newtheorem{prop}[thm]{Proposition}
\newtheorem{defn}[thm]{Definition}
\newtheorem{ass}[thm]{Assumption}
\newtheorem{constr}[thm]{Construction}
\theoremstyle{definition}
\newtheorem{rem}[thm]{Remark}
\theoremstyle{remark}
\renewcommand{\le}{\leqslant} 
\renewcommand{\ge}{\geqslant} 
\renewcommand{\leq}{\leqslant} 
\renewcommand{\geq}{\geqslant}
\newcommand{\ind}{\mathds{1}}
\newcommand{\set}[1]{\left\{#1\right\}}
\newcommand{\eg}{\emph{e.g.,}}
\newcommand{\probc}{\stackrel{\mathrm{P}}{\longrightarrow}}
\def\qed{ \hfill $\blacksquare$}  
 \let\gb=\beta   
     \let\gl=\lambda
\newcommand{\cA}{\mathcal{A}}\newcommand{\cB}{\mathcal{B}}\newcommand{\cC}{\mathcal{C}}
\newcommand{\cE}{\mathcal{E}}
\newcommand{\cI}{\mathcal{I}}
\newcommand{\cL}{\mathcal{L}}
\newcommand{\cM}{\mathcal{M}}
\newcommand{\cP}{\mathcal{P}}
\newcommand{\cS}{\mathcal{S}}\newcommand{\cT}{\mathcal{T}}
\newcommand{\cV}{\mathcal{V}}
\newcommand{\vA}{\mathbf{A}}
\newcommand{\vP}{\mathbf{P}}\newcommand{\vQ}{\mathbf{Q}}
\newcommand{\ve}{\mathbf{e}}
\newcommand{\vp}{\mathbf{p}}
\newcommand{\vs}{\mathbf{s}}\newcommand{\vt}{\mathbf{t}}
\newcommand{\mvX}{\boldsymbol{X}}
\newcommand{\mvx}{\boldsymbol{x}}\newcommand{\mvy}{\boldsymbol{y}}
\newcommand{\mvzeta}{\boldsymbol{\zeta}}
\newcommand{\mvvarpi}{\boldsymbol{\varpi}}
\newcommand{\fF}{\mathfrak{F}}
\newcommand{\fJ}{\mathfrak{J}}
\newcommand{\fP}{\mathfrak{P}}
\newcommand{\fm}{\mathfrak{m}}
\newcommand{\bb}[1]{\mathbb{#1}}
\newcommand{\bL}{\mathbb{L}}
\newcommand{\bN}{\mathbb{N}}
\newcommand{\bR}{\mathbb{R}}
\newcommand{\bT}{\mathbb{T}}
\newcommand{\bZ}{\mathbb{Z}}        
\newcommand{\sE}{\mathscr{E}}
\DeclareMathOperator{\E}{\mathds{E}}
\DeclareMathOperator{\pr}{\mathds{P}}
\newcommand{\bbT}{\mathbb{T}}
\newcommand{\TT}{\mathcal{T}}
\newcommand{\bs}{\mathbf{s}}
\newcommand{\bt}{\mathbf{t}}
\newcommand{\bfomega}{{\boldsymbol \omega}}
\newcommand{\Zbold}{{\mathbb{Z}}}
\newcommand{\prob}{\mathbb{P}}
\newcommand{\probfr}{\stackrel{\mbox{$\operatorname{P}$-\bf fr}}{\longrightarrow}}
\newcommand{\probcrf}{\stackrel{\mbox{$\operatorname{P}$-\bf efr}}{\longrightarrow}}
\newcommand{\Efr}{\stackrel{\mbox{$\E$-\bf fr}}{\longrightarrow}}
\newcommand{\convas}{\stackrel{\mathrm{a.s.}}{\longrightarrow}}
\newcommand{\convd}{\stackrel{d}{\longrightarrow}}
\newcommand{\convp}{\stackrel{P}{\longrightarrow}}
\newcommand{\stod}{\preceq_{\mathrm{st}}}
 \DeclareMathOperator{\BP}{BP}
\newcommand{\age}{{\sf Age} }
\newcommand{\Bern}{{\sf Bernoulli} }
\newcommand{\Poi}{{\sf Poisson} }
\newcommand{\Ma}{{\sf Mac} }
\newcommand{\nnd}{{\sf NoDel} }
\newcommand{\fringe}{{\sf fringe} }
\DeclareMathAlphabet{\mathscrbf}{OMS}{mdugm}{b}{n}
\DeclareFontFamily{U}{BOONDOX-calo}{\skewchar\font=45 }
\DeclareFontShape{U}{BOONDOX-calo}{m}{n}{
  <-> s*[1.05] BOONDOX-r-calo}{}
\DeclareFontShape{U}{BOONDOX-calo}{b}{n}{
  <-> s*[1.05] BOONDOX-b-calo}{}
\DeclareMathAlphabet{\mathcalb}{U}{BOONDOX-calo}{m}{n}
\SetMathAlphabet{\mathcalb}{bold}{U}{BOONDOX-calo}{b}{n}
\DeclareMathAlphabet{\mathbcalb}{U}{BOONDOX-calo}{b}{n}
\newcommand{\sss}{\scriptscriptstyle}
\newcommand{\fpm}{\mvvarpi}
\newcommand{\fplus}[1]{\lfloor #1\rfloor_{_+}}
\newcommand{\Cod}{\mathcalb{CoD}}
\newcommand{\Zma}{\mathcalb{Z}_{\Ma}}
\newcommand{\Mis}{\mathcalb{Mis}}
\tikzset{every path/.style={line width=.07 cm}}
\begin{document}

\title[Network evolution with macroscopic delay]{Network evolution with Macroscopic Delays: asymptotics and condensation}

\author[Banerjee]{Sayan Banerjee$^1$}
\author[Bhamidi]{Shankar Bhamidi$^1$}
\author[Dey]{Partha Dey$^2$}
\author[Sakanaveeti]{Akshay Sakanaveeti$^1$}

\thanks{$^1$\textsc{Department of Statistics and Operations Research, 304 Hanes Hall, University of North Carolina, Chapel Hill, NC 27599}\\
$^2$\textsc{Department of Mathematics, University of Illinois, Urbana-Champaign, IL}\\
\textit{E-mail addresses}: \texttt{sayan@email.unc.edu}, \texttt{bhamidi@email.unc.edu}, \texttt{psdey@illinois.edu}, \texttt{sakshay@unc.edu}.}

\subjclass[2020]{Primary 60K35; Secondary 05C80}
\keywords{temporal networks, delay distribution, continuous time branching processes, random trees, stable age distribution theory, local weak convergence, coupling, renewal theory}

\begin{abstract}
Preferential attachment models typically assume that each arriving vertex observes the current network before choosing its connection. Motivated by distributed systems and social networks, we study \emph{network delay}, where this decision uses only a time-delayed snapshot. We focus on \emph{macroscopic delays}, for which the delay is proportional to the current network size and hence removes a non-vanishing fraction of the available information. We identify the local weak limit as a continuous-time branching process whose reproduction point process has memory of its entire past. Since this non-Markovian description is difficult to analyze directly, we construct a dual branching process in which edges reproduce, recovering enough independence for quantitative analysis. This yields a detailed understanding of how the delay affects features such as the tail behavior of the asymptotic degree distribution, together with necessary and sufficient conditions for condensation—the phenomenon in which a positive fraction of the degree mass escapes to infinity. We conclude by studying the impact of the delay distribution on macroscopic functionals such as the root degree.
\end{abstract}

\maketitle
\tableofcontents

\section{Introduction}
\label{sec:intro}
Models of growing networks seek to explain macroscopic phenomena such as heavy-tailed degrees, information diffusion, sampling bias, and the recoverability of early vertices; see~\cite{albert2002statistical,newman2003structure,newman2010networks,bollobas2001random,durrett-rg-book,van2009random} and the references therein. Time-evolving networks~\cite{holme2012temporal,masuda2016guidance} arise in myriad applications such as social systems, where temporal dynamics affect diffusion and positional advantage~\cite{shah2012rumor,wagner2017sampling,espin2018towards,antunes2023learning}, and biological systems, where protein-interaction networks evolve through copying mechanisms~\cite{navlakha2011network,young2018network}. Their mathematical study has clarified mechanisms for heavy-tailed degree distributions~\cite{albert2002statistical,barabasi1999emergence}, seed reconstruction~\cite{bubeck-mossel,bubeck2017finding,banerjee2020persistence}, and bias in sampling and ranking algorithms~\cite{wagner2017sampling,karimi2022minorities,karimi2018homophily,antunes2023learning}.

A central feature of preferential attachment is its feedback mechanism: current degree determines the chance of receiving future connections. Most network evolution models assume that an arriving vertex observes the entire current network before choosing its connections. In distributed systems this information may be unavailable because of communication latency, privacy constraints, or synchronization bottlenecks. Limited information has motivated two complementary directions:
\begin{enumeratea}
\item connection rules based on local exploration from random locations~\cite{krapvisky2023magic,banerjee2022co};
\item \emph{network delay}, where an arriving vertex sees a delayed snapshot of the network.
\end{enumeratea}
We pursue the second direction. Delayed information is intrinsic to distributed ledgers and related systems~\cite{nakamoto2008bitcoin,popov2018tangle,muller2022tangle,li2019markov}. Relevant probabilistic work includes delay in queueing networks and its relation to unimodularity~\cite{baccelli2019renewal}, fluid limits for distributed ledgers~\cite{king2021fluid}, and the evolution of longest paths and one-endedness~\cite{dey2022asymptotic}. Related directed acyclic graph models appear in~\cite{computers12120257,xiao2024accelerating,ahmed2024optimization,monch2021dag}. This paper and its companion~\cite{BBDS04_meso} answer Question~6 of~\cite{dey2022asymptotic}.

Delay changes both the information set available to an arriving vertex and the reinforcement experienced by existing vertices. The main question is whether this stale information merely perturbs standard preferential attachment or produces a genuinely different large-network limit. The answer depends sharply on the scale of the delay.

We next define the model in the tree setting studied here.

\subsection{Network evolution with delay}
\label{sec:net-mod}
The model has three ingredients:
\begin{enumeratea}
\item a \emph{time-scale parameter} $\gb \in [0,1]$. We study the macroscopic regime $\gb=1$; the companion paper~\cite{BBDS04_meso} treats the mesoscopic regime $\gb<1$;
\item a \emph{delay distribution} $\mu$ on $(\bR_+,\cB(\bR_+))$, which for $\gb=1$ may be assumed to be supported on $[0,1]$;
\item a strictly positive \emph{attachment function} $f:\set{0,1,\ldots}\to\bR_+$. Here $f(k)=k+\alpha$ with $\alpha\geq0$.
\end{enumeratea}

{\begin{defn}[Network evolution with delay]
\label{defn:model}We grow a sequence of random trees $\set{\cT(n):n\geq 0}$ as follows:  

\begin{enumeratei}
	\item Initialize the sequence as $\cT(0)=\cT(1) = \{v_1\}$. At time $2$, the tree $\cT(2)$ consists of two vertices, labeled as $\set{v_1, v_2}$ and attached by a single edge directed from $v_2$ to $v_1$. Call the vertex $v_1$ the ``root'' of the tree. 
    
	\item Suppose the network has been constructed through time $n\geq2$, with vertices $\{v_1,\dots,v_n\}$ and edges directed from children to parents. For $v_i$ and $j\geq i$, let $\deg(v_i,j)$ denote its degree at time $j$ (in-degree plus one except at the root). Set $\deg(v_i,i)=1$ and $\deg(v_i,j)=0$ for $j<i$.

    \item  At time  $n+1$, a new vertex $v_{n+1}$ enters the system. A \emph{normalized} time delay $\xi_{n+1} \sim \mu$ independent of $\cT(n)$ is sampled. 
Conditional on $\cT(\fplus{n - n^{\gb}\xi_{n+1}})$, this new vertex attaches to a vertex $v\in \cT(\fplus{n - n^{\gb}\xi_{n+1}})$ with probability proportional to 
	\[
        f(\deg(v,\fplus{n - n^{\gb}\xi_{n+1}})).
    \]
\end{enumeratei}

Denote by $\cL(\gb, \mu, f)$ the corresponding probability distribution of the sequence of growing random trees.  
\end{defn}}
 
Thus $v_{n+1}$ sees only the vertex set and degrees in $\cT(\fplus{n-n^\gb\xi_{n+1}})$, not the state at time $n$. Delay therefore privileges older vertices, in contrast to \emph{aging} models in which older vertices lose attractiveness~\cite{garavaglia2017dynamics,medo2011temporal,hajra2005aging,csardi2006dynamics,zhu2003effect}.

This distinction is important: a delayed vertex can attach only to vertices already present in its snapshot and evaluates their attractiveness using stale degrees. Thus delay favors early vertices through an information constraint rather than by modifying the attachment function itself.

The chosen vertex is the parent of $v_{n+1}$, producing the rooted tree $\cT(n+1)$ on $n+1$ labeled vertices $\{v_1,\dots,v_{n+1}\}$ rooted at $v_1$. To reduce notational overhead, we henceforth suppress integer parts $\lfloor \cdot \rfloor$. We focus on trees and defer extensions and open problems to Section~\ref{sec:conc}.

Our main regime is $\gb=1$, where the delay $n\xi_{n+1}$ is of the same order as the network size and $1-\xi_{n+1}$ represents the fraction of the network visible to the arriving vertex. In particular, $\xi_{n+1}=0$ gives complete current information, whereas $\xi_{n+1}=1$ exposes only the root. The mesoscopic regime $\gb<1$ with general $f$ is treated in~\cite{BBDS04_meso}.

\subsection{Rationale for the paper and overview of our results}
Formal statements appear in the next three sections. The main contributions are:
\begin{enumeratea}
\item \textbf{Local limits, memory, and duality.}
Unlike the mesoscopic regime~\cite{BBDS04_meso}, where the local limit agrees with the no-delay model under weak assumptions, macroscopic delay changes the limit object. Theorem~\ref{thm:macroscopic-local} identifies it as a continuous-time branching process driven by non-Markovian reproduction point processes whose inter-arrival times depend on their entire history; see Sections~\ref{intuition} and~\ref{sec:lim-obj-macro}. The memory is unavoidable: the chance that a vertex reproduces depends on the degrees visible across all earlier delayed snapshots. The theorem gives expected fringe convergence even in the condensation regime and, when $\mu(\set{1})=0$, convergence in probability to an infinite {\tt sin}-tree. The latter also yields limits for global functionals such as the empirical spectral distribution.

Direct quantitative analysis of this non-Markovian representation is intractable because its inter-arrival times are recursively dependent. We therefore construct a dual process in which \emph{edges reproduce} (Section~\ref{edgedesc}). This representation exposes independent Poisson reproduction mechanisms and makes classical branching-process tools available for studying degree tails and condensation.

\item \textbf{Degree exponents and escape of mass.}
Delay suggests two competing heuristics for the degree distribution. Since early vertices are favored, mass may escape from the bulk and leave a lighter limiting tail. On the other hand, among vertices that remain visible, stale degrees may amplify reinforcement and produce a heavier tail. We show that the atom at maximal delay separates these behaviors. If $\mu(\set{1})>0$, the root attracts a positive density of vertices, condensation occurs, and the limiting degree distribution is strictly lighter-tailed than without delay (Corollary~\ref{cor:special-case-macro} and Remark~\ref{cond}). If instead $\mu(\set{1})=0$ and $\mu(\set{0})<1$, no mass escapes to the root and Theorem~\ref{thm:non-conden} shows that the limiting degree distribution is strictly heavier-tailed than in the no-delay model. Thus the effect of macroscopic delay is not a uniform shift of the classical exponent but a qualitative dichotomy governed by the endpoint behavior of $\mu$.

\item \textbf{Global functionals.}
Theorem~\ref{thm:macro-max-deg} gives the scaling of the root degree and illustrates a route from local to global asymptotics. Such a passage is not automatic because local weak convergence does not control an exceptional early vertex. The proof couples the global dynamics to the limiting edge branching process and controls the approximation error using renewal theory. This provides a template for other global statistics driven by early vertices.
	
\end{enumeratea}

Figures~\ref{fig:sim} and~\ref{fig:deg} illustrate how the delay distribution changes network geometry and compare simulated degree tails with the exponents predicted by Corollary~\ref{cor:degree-tail-macro-non-cond}.

\begin{figure}[htbp]
    \centering
    \begin{subfigure}[b]{0.49\textwidth}
    \includegraphics[width=1.0\linewidth]{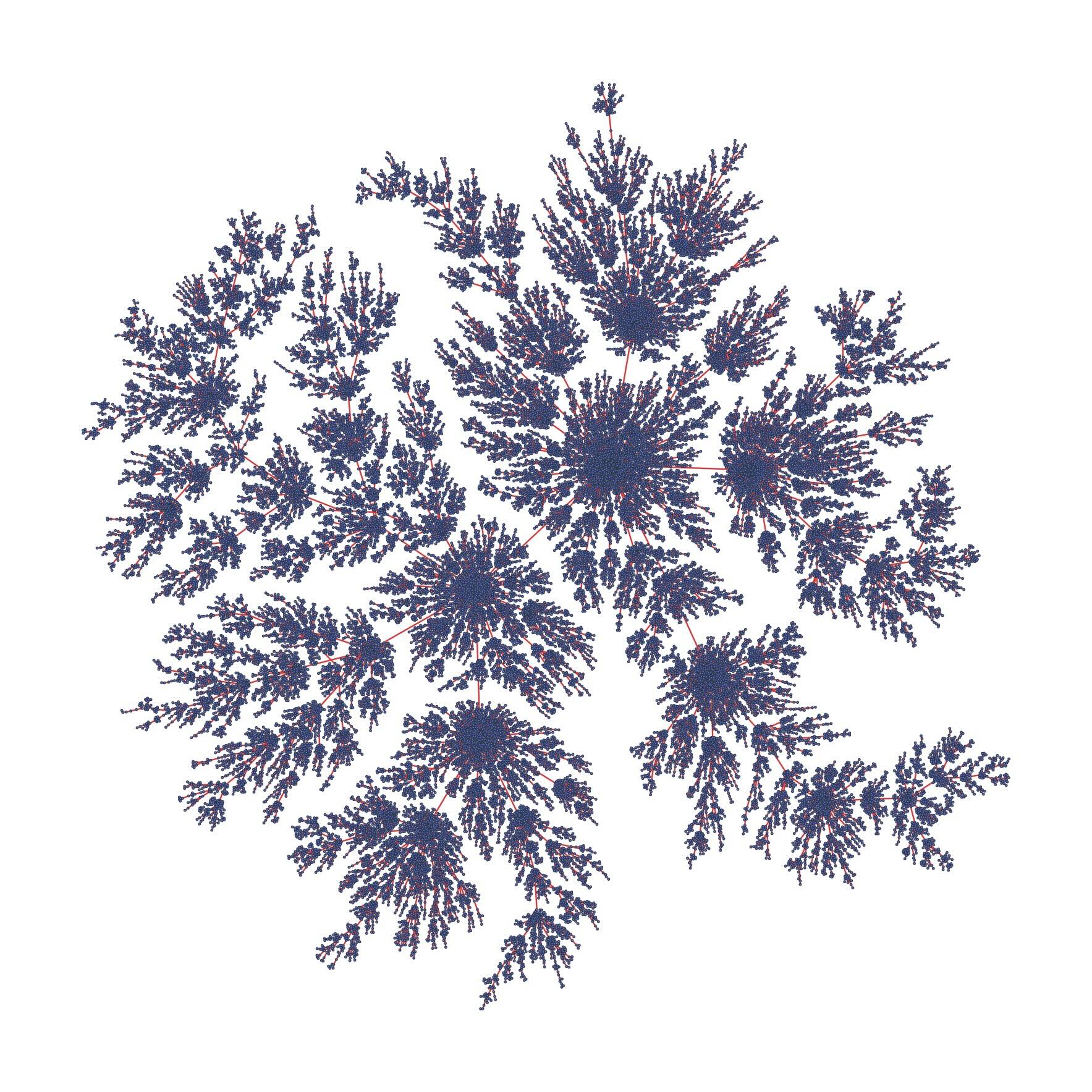}
    \caption{$\xi\equiv0$ (no delay)}
    \end{subfigure}
    \begin{subfigure}[b]{0.49\textwidth}
    \includegraphics[width=1.0\linewidth]{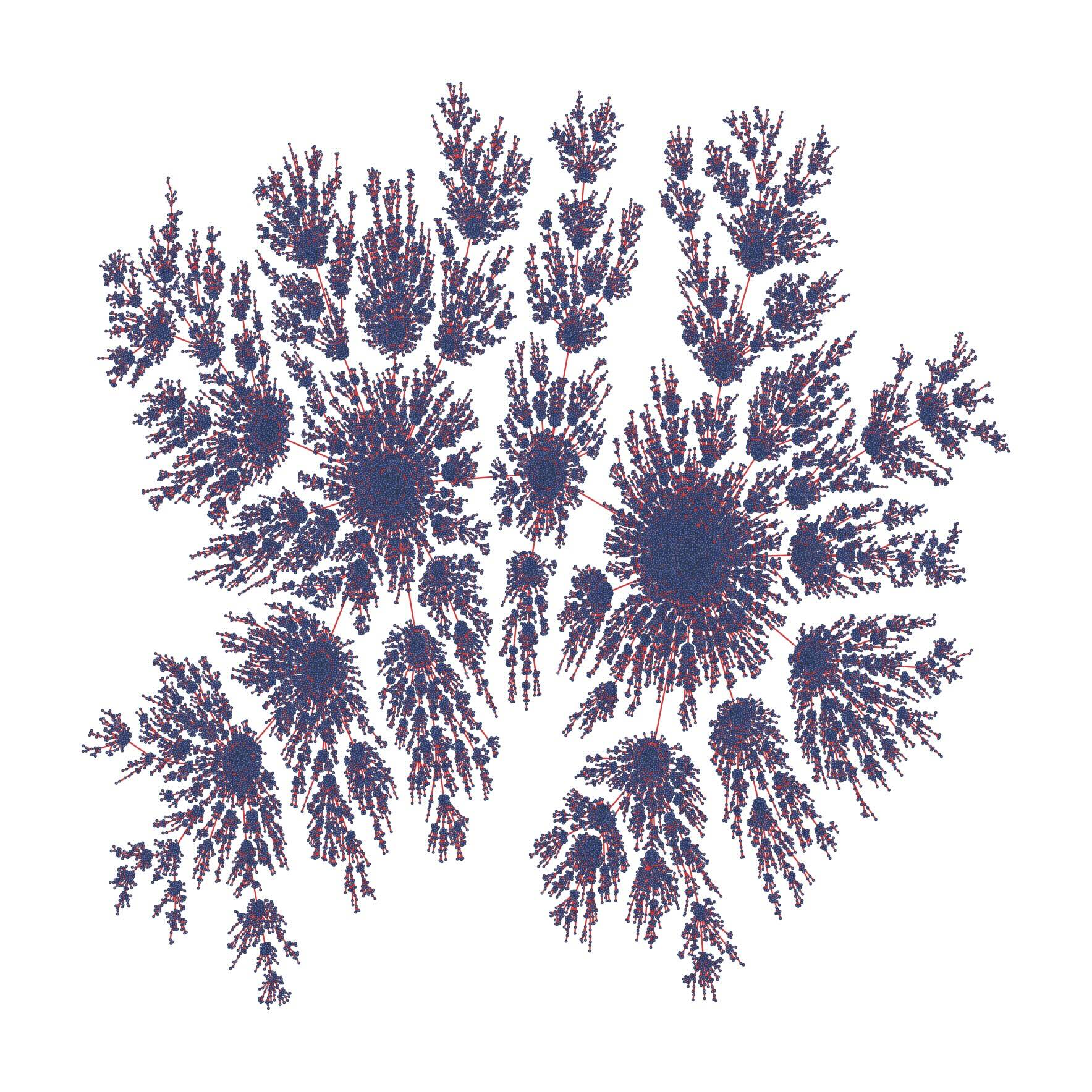}
    \caption{$\xi\sim 1-$Uniform$(0,1)^{1/2}$}
    \end{subfigure}
    \begin{subfigure}[b]{0.49\textwidth}
    \includegraphics[width=1.0\linewidth]{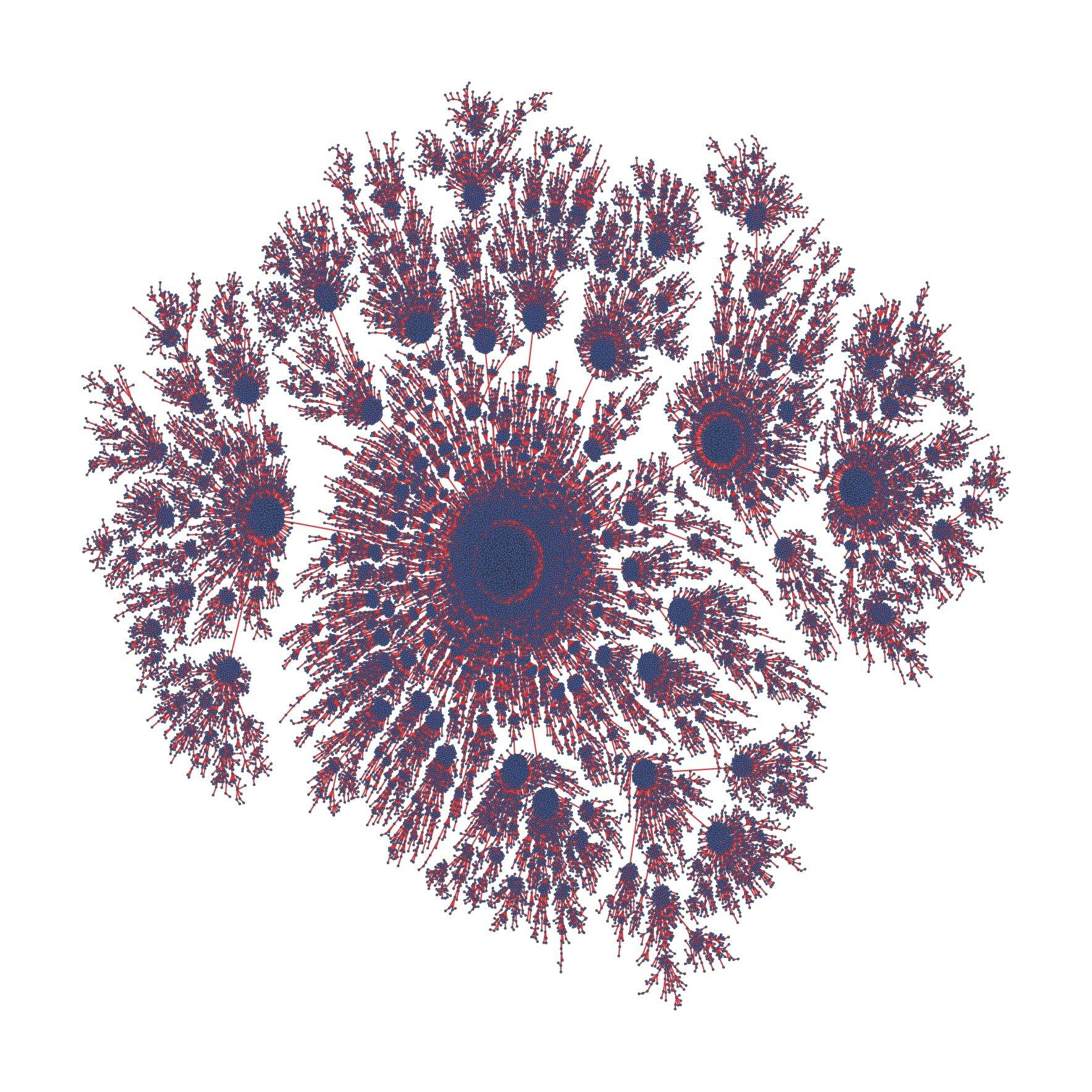}
    \caption{$\xi\sim 1-$Uniform$(0,1)$}
    \end{subfigure}
    \begin{subfigure}[b]{0.49\textwidth}
    \includegraphics[width=1.0\linewidth]{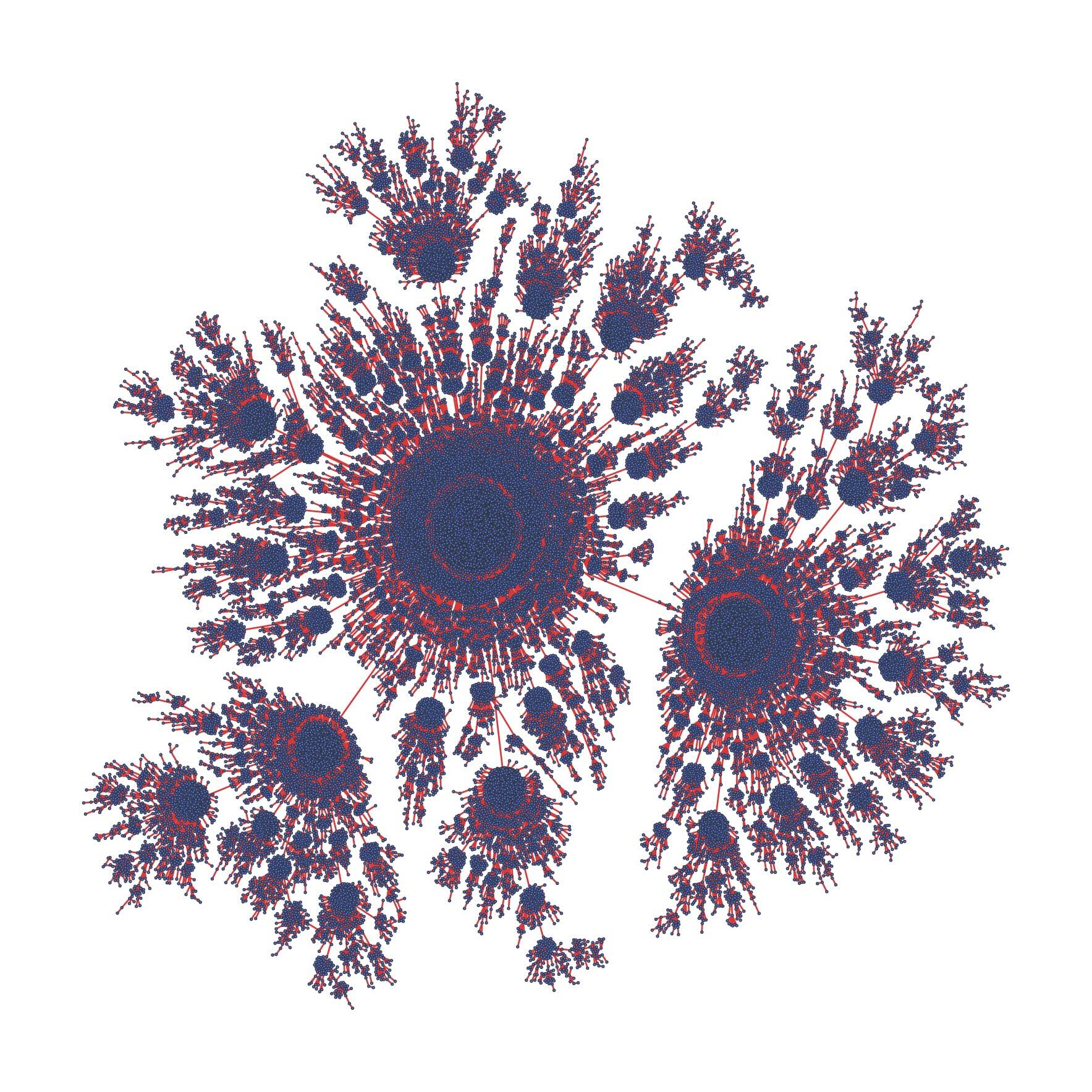}
    \caption{$\xi\sim 1-$Uniform$(0,1)^{2}$}
    \end{subfigure}
    \caption{Simulated Linear Preferential Attachment trees on 50000 nodes with different macroscopic delay distributions.}
    \label{fig:sim}
\end{figure}

\begin{figure}[htbp]
    \centering
    \begin{subfigure}[b]{.49\textwidth}
    \includegraphics[trim={18cm 0 0 1cm},clip,width=\linewidth]{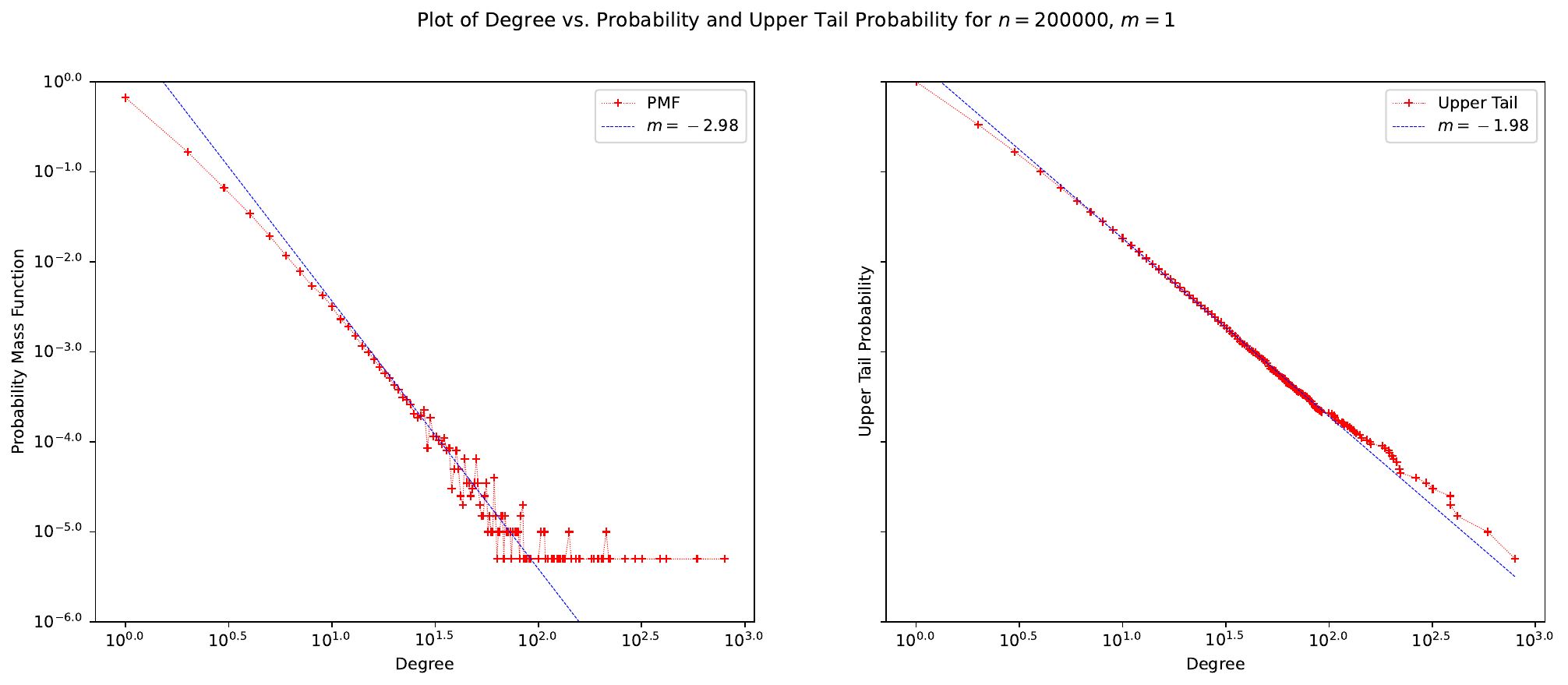}
    \caption{$\xi\equiv0$}
    \end{subfigure}
    \begin{subfigure}[b]{.49\textwidth}
    \includegraphics[trim={18cm 0 0 1cm},clip,width=\linewidth]{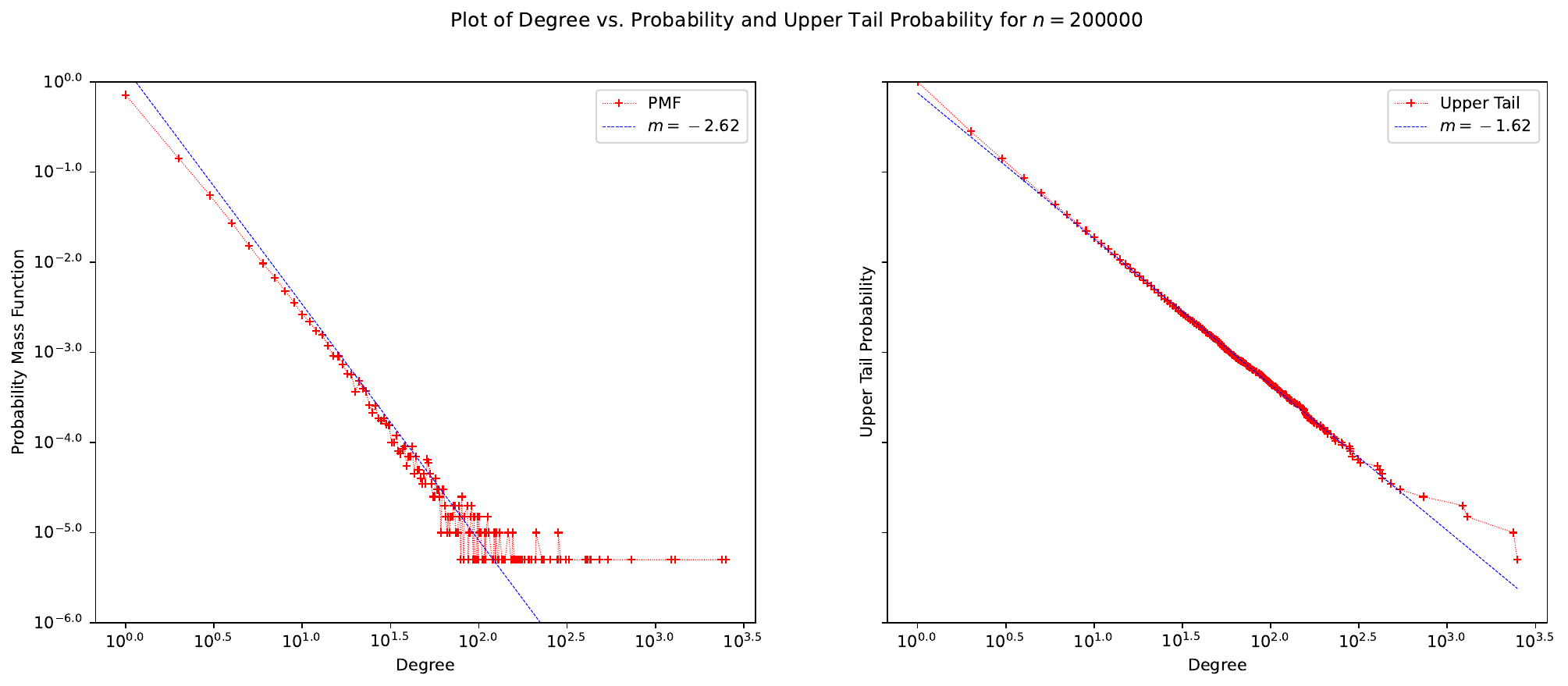}
    \caption{$\xi\sim 1-$Uniform$(0,1)^{1/2}$}
    \end{subfigure}
    \begin{subfigure}[b]{.49\textwidth}
    \includegraphics[trim={18cm 0 0 1cm},clip,width=\linewidth]{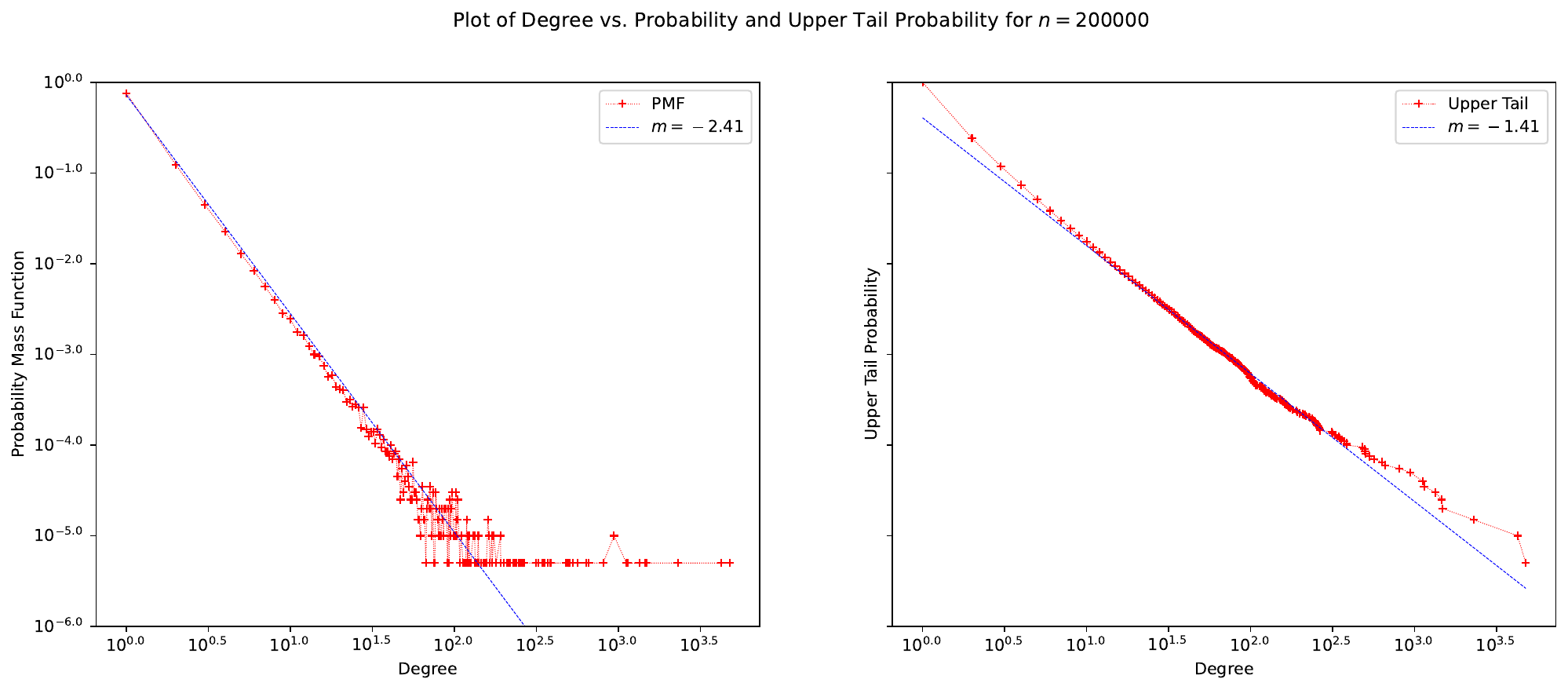}
    \caption{$\xi\sim $Uniform$(0,1)$}
    \end{subfigure}
    \begin{subfigure}[b]{.49\textwidth}
    \includegraphics[trim={18cm 0 0 1cm},clip,width=\linewidth]{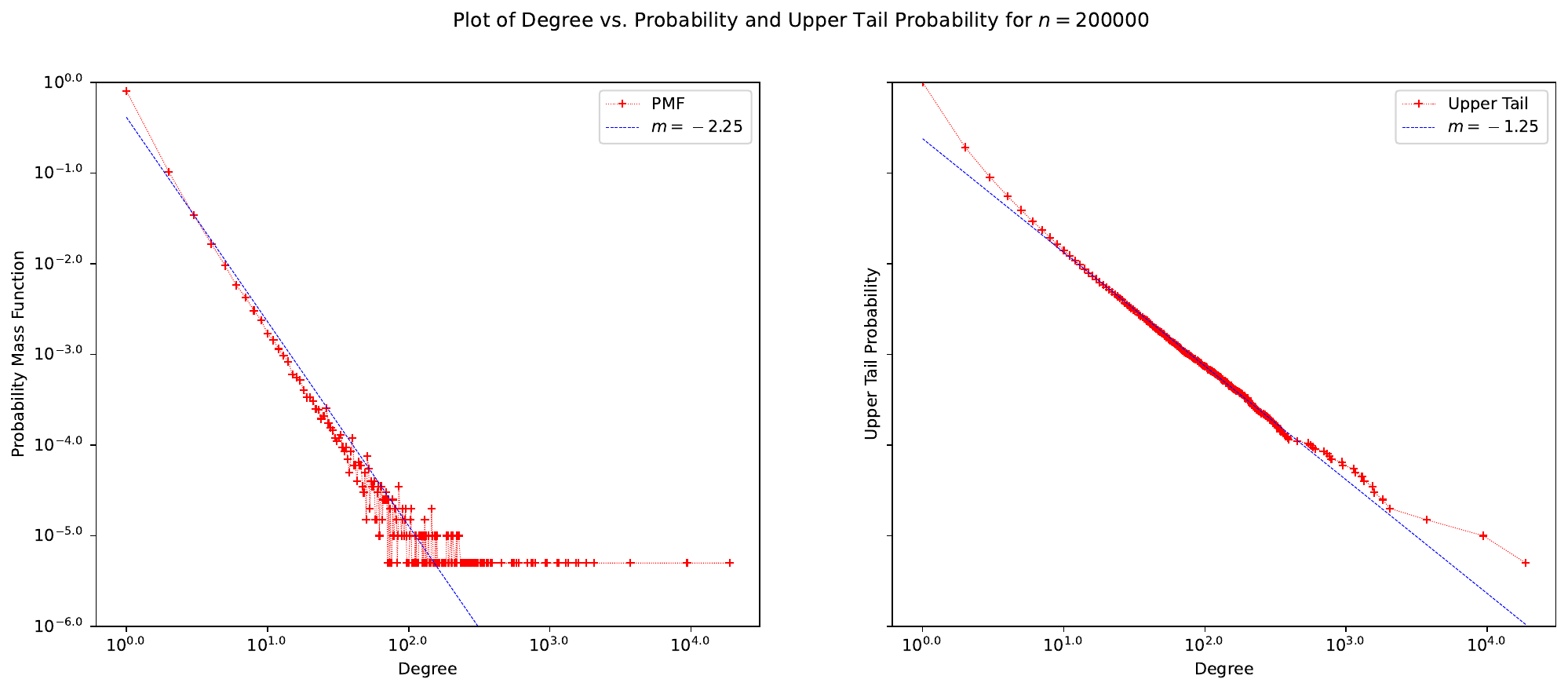} 
    \caption{$\xi\sim 1-$Uniform$(0,1)^{2}$}
    \end{subfigure}
    \caption{Tail exponent of degree distribution for Preferential Attachment trees on 200000 nodes with different delay distributions along with the predicted tail exponent. See Cor.~\ref{cor:degree-tail-macro-non-cond}(b) for a proof of the tail exponent $2/(1-\theta+\sqrt{1+\theta^2})$ when $\xi\sim 1-\text{Uniform}(0,1)^{1/\theta}$.}
    \label{fig:deg}
\end{figure}

\subsection{Organization of the paper}
{ Section~\ref{sec:local-wll} sets up basic notation { for local weak convergence specialized to the setting of trees}. Section~\ref{limobj} describes probabilistic constructions required to describe the local weak limit. Section~\ref{sec:main-res} contains statements of the main results and a brief overview of related work.   Section~\ref{sec:proofs-macro} contains proofs of the main results. We conclude with open problems and potential extensions in Section~\ref{sec:conc}.}

\section{Local weak convergence and {\tt sin}-trees}
\label{sec:local-wll}
\subsection{Overview}
{ Local weak convergence~\cite{aldous-steele-obj,benjamini-schramm,van2023random} has emerged as one standard tool-kit to understand asymptotics of large discrete random structures. As opposed to the general (graph) context, when the underlying sequence of discrete structures are rooted unordered trees, the treatment becomes significantly simpler via orienting the tree using paths originating from the root. We largely follow Aldous~\cite{aldous-fringe}, which developed the foundations for local weak limits for probabilistic models of trees, and defined a {\bf stronger notion} of local weak convergence in terms of convergence {\bf in probability} of fringe distribution of the sequence of random trees to corresponding objects for infinite trees with a single infinite path to infinity (sometimes referred to as {\tt sin}-trees).  The rationale for using the notion of fringe convergence for the rest of the paper are:
\begin{enumeratea}
    \item Fix a vertex at random and consider the sub-tree of descendants of this vertex (called the fringe at this vertex). Then under general conditions on the delay distribution, {\bf including the condensation regime when $\mu(\set{1}) >0$} our results imply convergence to an explicit limit finite random tree distribution described in the next section. 
    \item When the delay distribution has positive probability of attaching to the root ($\mu(\set{1})>0$), one cannot have weak convergence of local neighborhoods since a  positive density of vertices are connected to the root. When $\mu(\set{1}) = 0$, the theory in this section will enable us to show that the fringe distribution convergence in (a) automatically implies that,  in this setting, one {\bf does have} local weak convergence (in fact convergence in probability) to a limit infinite random object.   This automatically allows one to read off laws of large numbers for local functionals as well as global functionals such as the spectral distribution of the adjacency matrix. 
\end{enumeratea}
}


\subsection{Mathematical notation}
We use $\stod$ for stochastic domination between two real-valued probability measures. For $J\geq 1$ let $[J]:= \set{1,2,\ldots, J}$. If $Y$ has an exponential distribution with rate $\gl$, write this as $Y\sim \exp(\gl)$. Write $\bZ$ for the set of integers, $\bR$ for the real line, $\bN$ for the natural numbers and let $\bR_+:=(0,\infty)$. Write $\convas,\convp,\convd$ for convergence almost everywhere, in probability and in distribution, respectively. For a non-negative function $n\mapsto g(n)$,
we write $f(n)=O(g(n))$ when $|f(n)|/g(n)$ is uniformly bounded, and
$f(n)=o(g(n))$ when $\lim_{n\rightarrow \infty} f(n)/g(n)=0$.
Furthermore, write $f(n)=\Theta(g(n))$ if $f(n)=O(g(n))$ and $g(n)=O(f(n))$.
We write that a sequence of events $(A_n)_{n\geq 1}$
occurs \emph{with high probability} (whp) when $\pr(A_n)\rightarrow 1$ as $n \rightarrow \infty$. One of the core objects of this paper is the study of a sequence of growing random trees $\set{\cT_n:n\geq 1}$. Throughout we will write $\deg(v,\cT_n)$ for the degree of the vertex $v$ in tree $\cT_n$ and write for the empirical degree counts,  
\begin{equation}
    \label{eqn:deg-count}
    N_k(n) = \sum_{v\in \cT_n} \ind\set{\deg(v,\cT_n) = k }, \qquad k\geq 1.
\end{equation}

\subsection{Fringe decomposition for trees}
For $n\geq 1$, let $ \bT_{n} $ be the space of all rooted { unordered} trees on  $n$ {vertices}. Let $ \bbT =
\cup_{n=0}^\infty \bT_{n} $ be the space of all finite rooted trees.  Here $\bT_{0} = \emptyset $ will be used to represent the empty tree (tree on zero vertices). Let $\rho_{\bt}$ denote the root of $\bt$.  For any $r\geq 0$ and $\bt\in \bbT$, let $B(\bt, r) \in \bbT$ denote the subgraph of $\bt$ of vertices within graph distance $r$ from $\rho_{\bt}$, viewed as an element of $\bbT$ and rooted again at $\rho_{\bt}$. 

 Given two rooted finite trees $\bs, \bt \in \bbT$,  say that $\bs \simeq \bt $ if  there exists a {\bf root
preserving} isomorphism between the two trees viewed as unlabelled graphs. Given two rooted trees $\bt,\bs \in { \bbT}$ (\cite{benjamini-schramm},~\cite[Equation 2.3.15]{van2023random}), define the distance 
\begin{align}
\label{eqn:distance-trees}
	d_{\bbT}(\bt,\bs):= \frac{1}{1+R^*}, \qquad \text{with } \qquad R^* =\sup\{r: B(\bt, r) \simeq B(\bs, r)
	\}.
\end{align}

 Next, fix a tree $\bt\in \bbT$ with root $\rho = \rho_\bt$ and a vertex $v\in \bt$ at (graph) distance $h$ from the root.  Let $(v_0 =v, v_1, v_2, \ldots, v_h = \rho)$ be the unique path from $v$ to $\rho$. The tree $\bt$  can be decomposed as $h+1$ rooted trees $f_0(v,\bt), \ldots, f_h(v,\bt)$, where $f_0(v,\bt)$ is the tree rooted at $v$, consisting of all vertices for which there exists a path from the root passing through $v$. For $i \ge 1$, $f_i(v,\bt)$ is the subtree rooted at $v_i$, consisting of all vertices for which the path from the root passes through $v_i$ but not through $v_{i-1}$, see Figure~\ref{fig:fringe}. 

 \begin{figure}[htbp]
\centering
\includegraphics[scale=.2]{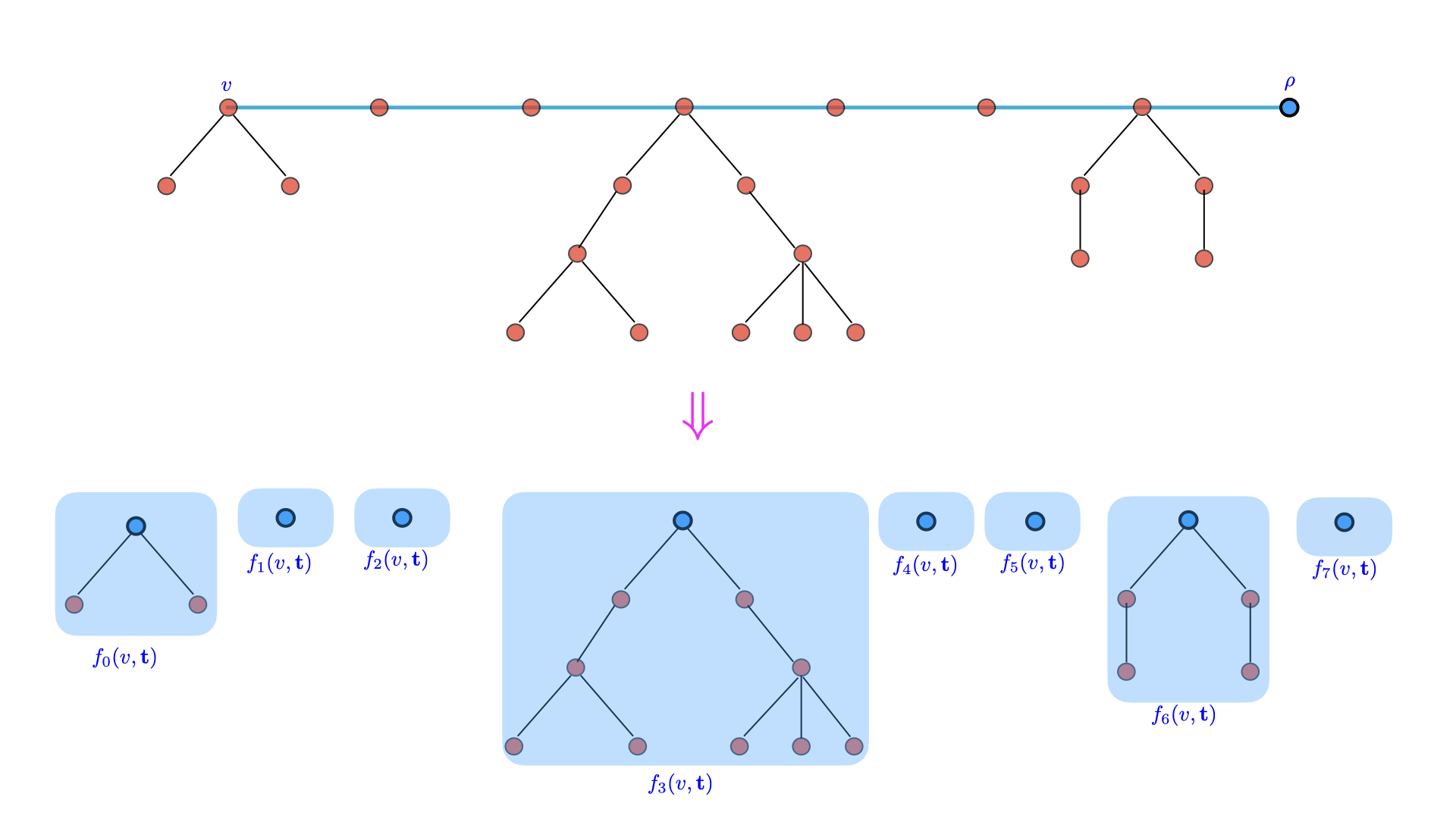}
\caption{Fringe decomposition around vertex $v$ of a finite tree rooted at $\rho$. Here, the blue colors represent the roots of the respective trees. }
\label{fig:fringe}
\end{figure}

  Call the map $(v,\bt) \leadsto \bbT^\infty$ where $v\in \bt$, defined via, 
\[F(v, \bt) = \left(f_0(v,\bt), f_1(v,\bt) , \ldots, f_h(v,\bt), \emptyset, \emptyset, \ldots \right),\]
as the fringe decomposition of $\bt$ about the vertex $v$. Call $f_0(v,\bt)$ the {\bf fringe} of the tree $\bt$ at $v$.
For $k\geq 0$, call $F_k(v,\bt) = (f_0(v,\bt) , \ldots, f_k(v,\bt))$ the {\bf extended fringe} of the tree $\bt$ at $v$ truncated at distance $k$ from $v$ on the path to the root.

Now consider the space $\bbT^\infty$. The metric in~\eqref{eqn:distance-trees} extends  to $\bbT^\infty$, \eg\ using the distance,
\begin{align}
\label{eqn:dist-inf}
	d_{\bbT^\infty}((\bt_0, \bt_1, \ldots),(\bs_0, \bs_1, \ldots)):= \sum_{i=0}^\infty \frac{1}{2^i} d_{\bbT}(\bt_i, \bs_i). 
\end{align}
We can also define analogous extensions to $\bT^k$ for finite $k$.  

Next, an element $\bfomega = (\bt_0, \bt_1, \ldots) \in \bbT^\infty$, with $|\bt_i|\geq 1$ for all $ i\geq 0$,  can be thought of as a locally finite infinite rooted tree with a {\bf s}ingle path to {\bf in}finity (thus called a {\tt sin}-tree~\cite{aldous-fringe}), as follows: Identify the sequence of roots of $\set{\bt_i:i\geq 0}$ with the integer lattice $\Zbold_+ = \set{0,1,2,\ldots}$, equipped with the natural nearest neighbor edge set, rooted at $\rho=0$. Analogous to the definition of extended fringes for finite trees, for any $k\geq 0$, write 
$F_k(0,\bfomega)= (\bt_0, \bt_1, \ldots, \bt_k)$. See Figure~\ref{fig:sin}. 

\begin{figure}[htbp]
\centering
\includegraphics[scale=.2]{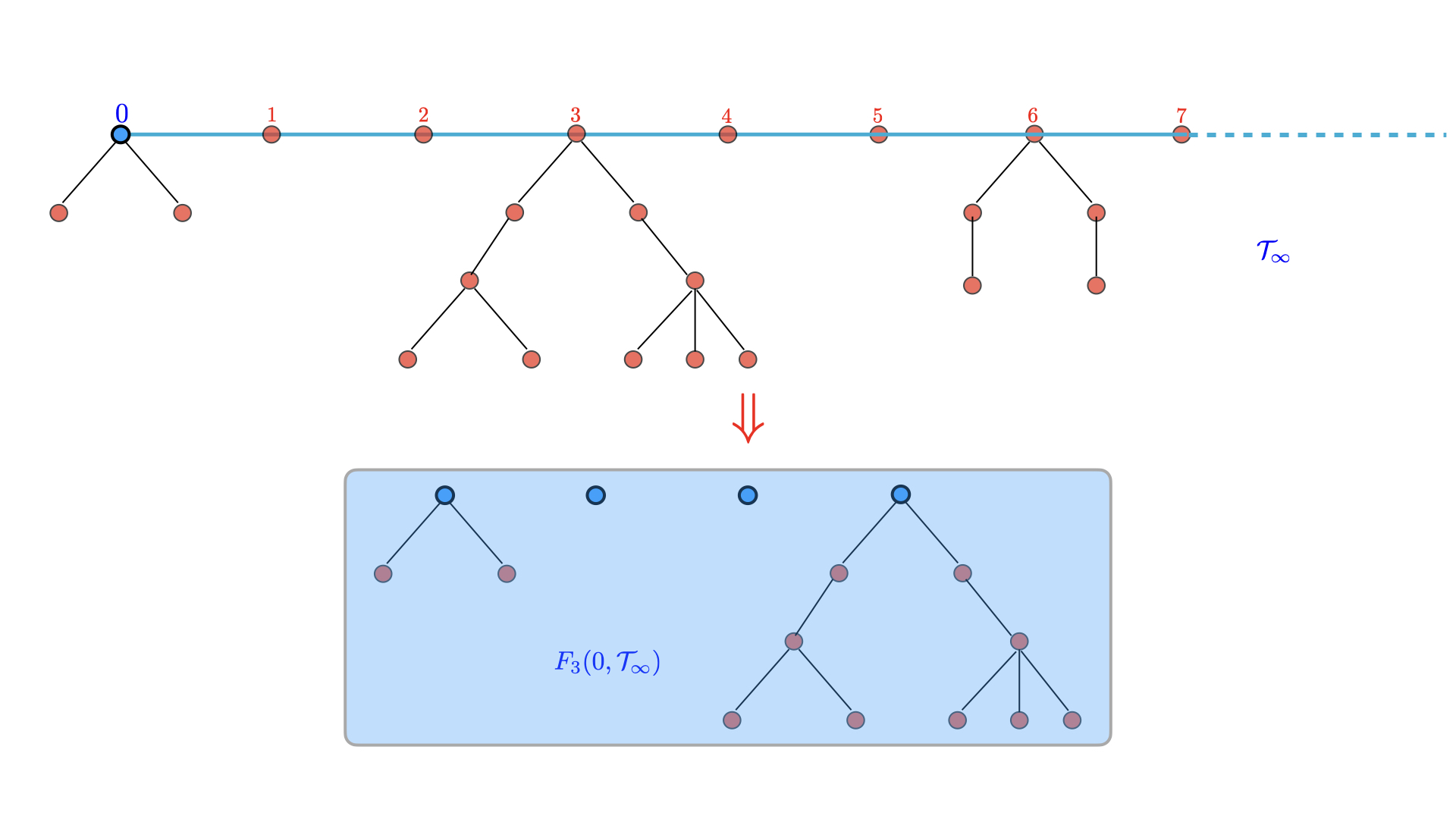}
\caption{A {\tt sin}-tree $\cT_\infty$, namely a tree rooted at $0$ with a single infinite path to infinity, and the corresponding extended fringe $F_3(0,\cT_\infty)$ up to level three about $0$. }
\label{fig:sin}
\end{figure}

Call this the extended fringe of the tree $\bfomega$ at vertex $0$, till distance $k$, on the infinite path from $0$. Call $\bt_0 = F_0(0,\bfomega)$ the {\bf fringe} of the {\tt sin}-tree $\bfomega$. Now suppose $\prob$ is a probability measure on $\bbT^\infty$ such that, for $\TT:= (\bt_0(\TT), \bt_1(\TT),\ldots)\sim \prob$,  $|\bt_i(\TT)|\geq 1$ a.s.  $\forall~i\geq 0$. Then $\TT$ can be thought of as an infinite {\bf random} {\tt sin}-tree. 

Define a matrix $\vQ = (\vQ(\vs,\vt): \vs, \vt \in \bbT)$ as follows: suppose the root $\rho_{\vs}$ in $\vs$ has degree $\deg(\rho_{\vs}) \ge 1$, and let $(v_1,\ldots, v_{\deg(\rho_{\vs})})$ denote its children. { For $1\leq i\leq \deg(\rho_{\vs})$,} let $f(\vs, v_i)$ be the subtree below $v_i$ and rooted at $v_i$, viewed as as an element of $\bbT$. Write,
\begin{align}
\label{eqn:Q-def}
	\vQ(\vs,\vt):= \sum_{i=1}^{\deg(\rho_{\vs})} \ind\set{d_{{ \bbT}}(f(\vs, v_i), \vt) = 0}. 
\end{align} 
Thus, $Q(\vs, \vt)$ counts the number of descendant subtrees of the root of $\vs$ that are isomorphic in the sense of topology to $\vt$. If $\deg(\rho_{\vs})=0$, define $Q(\vs, \vt)=0$. Now consider a sequence $(\bar \vt_0, \bar \vt_1, \dots)$ of trees in $\bbT$ such that $Q(\bar \vt_i, \bar \vt_{i-1}) \ge 1$ { for} all $i \ge 1$. Then there exists a unique infinite {\tt sin}-tree $\TT$ with infinite path indexed by $\Zbold_+$ such that $\bar \vt_i$ is the subtree rooted at $i$ for all $i \in \Zbold_+$. Conversely by taking $\bar \vt_i$ to be the union of (vertices and induced edges) of $\vt_0,\dots, \vt_i$ for each $i \in \Zbold_+$, one may check that every infinite {\tt sin}-tree has such a representation. Following~\cite{aldous-fringe}, we call this the \emph{monotone representation} of the {\tt sin}-tree $\TT$.

\subsubsection{Convergence on the space of trees}
\label{sec:fringe-convg-def}
{ For $1\leq k\leq \infty$, let $\cM_{\pr}(\bbT^k)$ denote the space of probability measures on the associated space. Metrize this space using the topology of weak convergence inherited from the corresponding metric on the space $\bbT^k$, resulting in a Polish space.  See~\cite{billingsley2013convergence} for further details. }
Suppose $\set{\cT_n}_{n\geq 1} \subseteq \bbT$ be a sequence of {\bf finite} rooted random trees on some common probability space (for notational convenience, assume $|\cT_n|= n$, or more generally $|\cT_n|\convas \infty$). For $n\geq 1$ and for each fixed $k\geq 0$, { consider  the empirical distribution of the {\bf extended} fringe up to distance $k$ on the path to the root from each vertex in the tree:} 
\begin{align}
\label{eqn:empirical-fringe-def}
	\fP_{n}^k:= \frac{1}{n} \sum_{v\in \cT_n} \delta\set{F_k(v,\cT_n)}. 
\end{align}
Thus $\set{\fP_{n}^k:n\geq 1}$ can be viewed as a random sequence in  $\cM_{\pr}(\bbT^k)$ and we can talk about weak convergence of this sequence, which is the content of (b) and (c) of the definition below. Further, with $k=0$, define the probability measure  $\E(\fP_n^0)$ on $\bbT$ via the operation $\E(\fP_n^0)[\vt] = \E(\fP_n^0[\vt])~\forall~\vt \in \bbT$.

\begin{defn}[Local weak convergence]
	\label{def:local-weak}
 Fix a probability measure $\varpi$ on $\bT$.
	\begin{enumeratea}
 \item \label{it:fringe-exp} Say that a sequence of trees $\set{\cT_n}_{n\geq 1}$ converges in {\bf expectation}, in the fringe sense, to $\varpi$, if 
 \[\E(\fP_n^{0}) \to \varpi, \quad \text{ on } \cM_{\pr}(\bbT) \quad  \text{ as } n\to\infty. \]
 Denote this convergence by $\TT_n\Efr \varpi$ as $n\to\infty$.
	    \item \label{it:fringe-a}  Say that a sequence of trees $\set{\cT_n}_{n\geq 1}$ converges in probability, in the fringe sense, to $\varpi$, if \[\fP_n^{0} \probc \varpi, \quad \text{ as } n\to\infty. \]
	Denote this convergence by $\TT_n\probfr \varpi$ as $n\to\infty$.
	    \item \label{it:fringe-b} Say that a sequence of trees $\set{\cT_n}_{n\geq 1}$ converges in probability, in the {\bf extended fringe sense}, to a limiting infinite random {\tt sin}-tree $\TT_{\infty}$ if for all $k\geq 0$ one has
	  \[\fP_n^k \probc \prob\left(F_k(0,\TT_{\infty}) \in \cdot \right), \qquad \text{ as } n\to\infty. \]
	Denote this convergence by $\TT_n\probcrf \TT_{\infty}$ as $n\to\infty$.
	\end{enumeratea}
\end{defn} 
{
\begin{rem}\label{lwcrem}
   In part(c), if we replaced convergence in probability with convergence in expectation, namely $\E(\fP_n^k(\cdot)) \to \prob\left(F_k(0,\TT_{\infty}) \in \cdot \right)$ then this is the same as the notion of local weak convergence as in \cite{aldous-steele-obj,benjamini-schramm} specialized to the setting of trees. Such notions allow one to prove convergence of expectations of the degree distribution but not convergence in probability of this same functional.  Part(c) describes a stronger notion, namely, convergence in probability. It is easy to construct settings where one has local weak convergence but {\bf not} (c), for e.g., $\cT_n$ is a rooted random $3$-regular tree upto generation $n$ with probability $1/3$ and random $4$-regular tree  upto generation $n$ with probability $2/3$.    In an identical fashion, one can define notions of convergence in distribution or almost surely in the fringe, respectively, extended fringe sense. 
\end{rem}
}
{
\begin{rem}
\label{rem:star}
   Letting $\varpi_{\infty}(\cdot) = \pr(F_0(0, \cT_{\infty}) = \cdot)$ denote the distribution of the fringe of $\cT_\infty$ on $\bT$, convergence in (c) above clearly implies convergence in notion (b) with $\varpi =\varpi_{\infty}(\cdot) $. However one can have fringe convergence without extended fringe convergence. For example, suppose $\cT_n$ is the $n$-star rooted at the hub. Then it is trivial to check that, with $\varpi_{\infty} = \delta_{\bullet}$, namely the probability measure with all the mass on the rooted tree with one vertex, we have $\TT_n\probfr \varpi_{\infty}$. However, one {\bf does not} have convergence in the extended fringe sense (and thus the sequence of random trees does not converge in the local weak convergence sense). For the random tree models considered in this paper,  we will show that, under weak technical conditions, one always has expected fringe convergence of the sequence of random trees as in (a) (with the limit fringe distribution having an intricate structure). Moreover, when the delay random variable $\xi$ does not have positive mass at $1$, then one automatically has extended fringe convergence (implying local weak convergence) to an appropriate limit object. 
\end{rem}
}

{ It turns out, as discovered in \cite{aldous-fringe},  if the limiting distribution $\varpi$ in (b) has certain `stationarity' and extremality properties defined next, then \emph{convergence in the expected fringe sense implies convergence in the extended fringe sense}. In this setting, {\bf just proving convergence of expectations} of the fringe distribution \emph{automatically guarantees} convergence in probability to a limit random object. }


\begin{defn}[Fringe distribution~\cite{aldous-fringe}] \label{fringedef}
	Say that a probability measure $\varpi$ on $\bbT$ is a fringe distribution if 
	\[\sum_{\vs} \varpi(\vs) \vQ(\vs, \vt) = \varpi(\vt), \qquad \forall~\vt \in \bbT. \]
\end{defn}
It is easy to check that the space of fringe distributions $\cM_{\pr, \fringe}(\bbT) \subseteq \cM_{\pr}(\bT)$ is a convex subspace of the space of probability measure on $\bT$ and thus one can talk about extreme points of this convex subspace. The following fundamental theorem is one of the highlights of~\cite{aldous-fringe}. 
\begin{thm}[\cite{aldous-fringe}]
\label{thm:aldous-efr-pfr}
    Fix a fringe distribution $\varpi \in \cM_{\pr, \fringe}(\bbT)$. Suppose a sequence of trees $\set{\cT_n:n\geq 1}$ converges in the expected fringe sense $\cT_n \Efr \varpi$ as $n\to\infty$. If $\varpi$ is extremal in the space of fringe measures, then the above convergence in expectation automatically implies $\cT_n \probfr \varpi$. 
\end{thm}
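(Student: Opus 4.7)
The plan is a tightness-plus-extremality argument, in the spirit of Choquet's representation theorem. The first step is to show that the random measures $\{\fP_n^0\}_{n \geq 1}$ form a tight family in $\cM_{\pr}(\bbT)$. Since $\E[\fP_n^0] \to \varpi$ in the Polish space $\cM_{\pr}(\bbT)$, Prokhorov's theorem furnishes, for each $\epsilon > 0$, a compact $K_\epsilon \subset \bbT$ with $\sup_n \E[\fP_n^0(K_\epsilon^c)] \leq \epsilon^2$; Markov's inequality then upgrades this to $\pr(\fP_n^0(K_\epsilon^c) > \epsilon) \leq \epsilon$, and the fact that $\{\mu : \mu(K_\epsilon^c) \leq \epsilon\}$ is compact in $\cM_{\pr}(\bbT)$ gives tightness of the laws of $\fP_n^0$. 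Passing to any convergent subsequence $\fP_{n_k}^0 \convd \fP^{\ast}$, convergence of the means forces $\E[\fP^{\ast}] = \varpi$.

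Next I would identify any subsequential limit $\fP^{\ast}$ as almost surely a fringe distribution. The key pathwise identity, obtained by pairing each non-root vertex of $\cT_n$ with its parent, is
\[
\sum_{\vs \in \bbT} \vQ(\vs, \vt)\, \fP_n^0[\vs] \;=\; \fP_n^0[\vt] \;-\; \frac{1}{n}\ind\bigl\{f_0(\rho_{\cT_n}, \cT_n) = \vt\bigr\}, \qquad \vt \in \bbT,
\]
since the left-hand side counts, normalized by $n$, exactly those vertices whose fringe class is $\vt$ and which have a parent. The defect is $O(1/n)$ uniformly in $\vt$. Truncating to trees $\vt$ of depth and maximum degree bounded by some $D$ (using the Step 1 tightness to control the tails as $D \to \infty$) and passing to the weak limit along the subsequence yields $\sum_{\vs} \vQ(\vs, \vt)\,\fP^{\ast}[\vs] = \fP^{\ast}[\vt]$ almost surely for every $\vt$, so $\fP^{\ast} \in \cM_{\pr, \fringe}(\bbT)$ almost surely.

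Finally, I would invoke extremality. Suppose toward contradiction that $\pr(\fP^{\ast} \neq \varpi) > 0$. Choose $\vt_0 \in \bbT$ and a constant $c$ so that the cylinder event $A := \{\fP^{\ast}[\vt_0] \leq c\}$ satisfies $p := \pr(A) \in (0,1)$ and $\mu_1 := \E[\fP^{\ast} \mid A] \neq \mu_2 := \E[\fP^{\ast} \mid A^c]$. Both $\mu_1$ and $\mu_2$ lie in $\cM_{\pr, \fringe}(\bbT)$ because the defining relation $\sum_\vs \vQ(\vs, \vt)\, \mu(\vs) = \mu(\vt)$ is linear and hence preserved under conditional expectation of random fringe distributions. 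The convex combination $\varpi = p \mu_1 + (1-p) \mu_2$ then contradicts the extremality of $\varpi$, forcing $\fP^{\ast} = \varpi$ almost surely. Since the subsequence was arbitrary and the tight family has a unique subsequential limit, $\fP_n^0 \probc \varpi$. The main obstacle is Step 2: rigorously transferring the pathwise $O(1/n)$ invariance to the random weak limit, because $\bbT$ is not locally compact under the ultrametric $d_{\bbT}$ and mass can in principle leak to trees of unbounded complexity; this is precisely where the tightness from Step 1 is leveraged via monotone truncation.
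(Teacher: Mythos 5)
The paper does not prove this result; it cites it to Aldous~\cite{aldous-fringe} and moves on, so there is no paper proof to compare against. Your blind proposal reconstructs the standard Choquet-type argument, and the overall skeleton — tightness of the laws of $\fP_n^0$, identification of subsequential limits as a.s.\ fringe distributions with mean $\varpi$, and the conditioning-on-an-event contradiction with extremality — is sound and, I believe, close in spirit to Aldous's own. Two points need correction or tightening.

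First, your closing caveat is mistaken: $\bbT$ \emph{is} locally compact. In fact it is a countable \emph{discrete} space: for any finite rooted tree $\vt$ of depth $d$, the ball of radius $1/(d+2)$ around $\vt$ requires $B(\bs,d+1)\simeq B(\vt,d+1)=\vt$, which forces $\bs$ to have no vertices beyond depth $d$ and hence $\bs\simeq\vt$. So singletons are open, compacts are finite sets, and the evaluation maps $\mu\mapsto\mu(\vt)$ are continuous. This simplifies rather than complicates your argument; the genuine subtlety lies elsewhere.

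Second, the real gap is in Step~2, where you pass the identity through the weak limit. Tightness of $\E[\fP_n^0]$ controls the measures $\fP_n^0$ on $\bbT$, but the kernel $\vQ(\vs,\vt)$ is unbounded in $\vs$, so tightness alone does \emph{not} let you interchange the infinite sum $\sum_\vs\vQ(\vs,\vt)\,\cdot$ with the weak limit. What you do get, via the Portmanteau theorem applied to the open set $\{\mu:\sum_{\vs\in S}\vQ(\vs,\vt)\mu(\vs)>\mu(\vt)\}$ for a finite $S$ followed by monotone convergence as $S\uparrow\bbT$, is only the one-sided inequality
\[
\sum_{\vs}\vQ(\vs,\vt)\,\fP^{\ast}(\vs)\;\le\;\fP^{\ast}(\vt)\quad\text{a.s., for every }\vt.
\]
To upgrade this to equality you need to invoke the \emph{hypothesis} that $\varpi$ is itself a fringe distribution: take expectations, apply Tonelli, and compare
\[
\E\!\left[\sum_{\vs}\vQ(\vs,\vt)\,\fP^{\ast}(\vs)\right]=\sum_{\vs}\vQ(\vs,\vt)\,\varpi(\vs)=\varpi(\vt)=\E\!\left[\fP^{\ast}(\vt)\right],
\]
so the nonnegative defect $\fP^{\ast}(\vt)-\sum_{\vs}\vQ(\vs,\vt)\,\fP^{\ast}(\vs)$ has zero mean and is therefore a.s.\ zero. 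Without this comparison the argument stalls at an inequality and does not establish that $\fP^{\ast}$ lies a.s.\ in $\cM_{\pr,\fringe}(\bbT)$. Once that is in place, your Step~3 extremality argument (splitting on an event where some $\fP^{\ast}(\vt_0)$ is non-degenerate and using linearity of the fringe relation under conditioning) is correct, and the conclusion that $\fP^{\ast}=\varpi$ a.s., hence $\fP_n^0\probc\varpi$, follows.
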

The advantage of this result is that for proving convergence in the probability fringe sense, at least under the extremality of the limit object, dealing with expectations is enough. 
The next result shows that convergence in the probability fringe sense often automatically implies convergence to a limit infinite {\tt sin}-tree. We need one additional definition. For any fringe distribution $\varpi$ on $\bbT$, one can uniquely obtain the law $\varpi^{EF}$ of a random {\tt sin}-tree $\TT$ with monotone decomposition $(\bar\bt_0(\TT), \bar\bt_1(\TT),\ldots)$ such that for any $i \in \Zbold_+$, any $\bar \vt_0, \bar \vt_1, \dots$ in $\bbT$,
\begin{align}\label{ftoef}
\varpi^{EF}((\bar\bt_0(\TT), \bar\bt_1(\TT),\ldots, \bar\bt_i(\TT)) = (\bar \vt_0,\vt_1,\dots,\vt_i)) := \varpi(\vt_i) \prod_{j=1}^{i}Q(\vt_i,\vt_{i-1}),
\end{align}
where the product is taken to be one if $i=0$. The following lemma follows by adapting the proof of~\cite[Propositions 10 and 11]{aldous-fringe}, and the proof is omitted.

\begin{lemma}\label{ftoeflemma}
Suppose a sequence of trees $\set{\cT_n}_{n\geq 1}$ converges in probability, in the fringe sense, to $\varpi$. Moreover, suppose that $\varpi$ is a fringe distribution in the sense of Definition~\ref{fringedef}. Then $\set{\cT_n}_{n\geq 1}$ converges in probability, in the extended fringe sense, to a limiting infinite random sin-tree $\TT_{\infty}$ whose law $\varpi^{EF}$ is uniquely obtained from $\varpi$ via~\eqref{ftoef}.
\end{lemma}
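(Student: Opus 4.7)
The plan is to establish pointwise convergence of the empirical extended-fringe distributions $\fP_n^k$ to the finite-dimensional marginals of the claimed law $\varpi^{EF}$ for each $k \geq 0$, and then upgrade this to weak convergence on $\cM_{\pr}(\bbT^{k+1})$ using the discrete topology on $\bbT$.

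First I would derive a clean combinatorial identity relating $\fP_n^k$ to $\fP_n^0$. Fix a non-degenerate sequence $(\vt_0,\dots,\vt_k)\in\bbT^{k+1}$ with $|\vt_i|\geq 1$ for every $i$, and let $\bar\vt_0,\bar\vt_1,\dots,\bar\vt_k$ be its monotone representation, so that $\bar\vt_j$ is formed from $\bar\vt_{j-1}$ by attaching it as a distinguished child-subtree at the root of $\vt_j$. A vertex $v\in\cT_n$ satisfies $F_k(v,\cT_n)=(\vt_0,\dots,\vt_k)$ precisely when its $k$-th ancestor $v_k$ has $f_0(v_k,\cT_n)=\bar\vt_k$ and the descending path $v_k\to v_{k-1}\to\cdots\to v$ selects successive child-subtrees isomorphic to $\bar\vt_{k-1},\dots,\bar\vt_0$. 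By the definition of $Q$ in~\eqref{eqn:Q-def}, the number of admissible down-paths emanating from any such $v_k$ equals exactly $\prod_{j=1}^{k}Q(\bar\vt_j,\bar\vt_{j-1})$. Summing over $v_k\in\cT_n$ and dividing by $n$ gives the clean identity
\[
\fP_n^k\bigl[(\vt_0,\dots,\vt_k)\bigr] \;=\; \fP_n^0\bigl[\bar\vt_k\bigr]\cdot\prod_{j=1}^{k}Q(\bar\vt_j,\bar\vt_{j-1}).
\]
For degenerate sequences containing an empty component, $\fP_n^k$ charges only vertices within distance $k-1$ of the root, whose total mass is at most $|B(\cT_n,k-1)|/n=o_{\pr}(1)$ under the standing hypothesis that $|\cT_n|\to\infty$ and $\varpi$ is concentrated on $\bbT$.

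Combining the identity with $\cT_n\probfr\varpi$ and the formula~\eqref{ftoef} yields pointwise convergence
\[
\fP_n^k\bigl[(\vt_0,\dots,\vt_k)\bigr]\;\probc\;\varpi(\bar\vt_k)\prod_{j=1}^{k}Q(\bar\vt_j,\bar\vt_{j-1})\;=\;\varpi^{EF}\bigl(F_k(0,\TT_\infty)=(\vt_0,\dots,\vt_k)\bigr).
\]
Since $\bbT$ is countable and each finite rooted tree is an isolated point in the metric $d_{\bbT}$, the space $\bbT^{k+1}$ carries the discrete topology; pointwise convergence of random probability mass functions to a probability measure on such a space is equivalent, via Scheff\'e's lemma, to total variation convergence, and hence to weak convergence. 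Verifying that the pointwise limits form a bona fide probability measure and are consistent under the projections $\bbT^{k+1}\twoheadrightarrow\bbT^{k}$ reduces to the identity $\sum_{\vt_k}\varpi(\bar\vt_k)Q(\bar\vt_k,\bar\vt_{k-1})=\varpi(\bar\vt_{k-1})$, which, because $\vt_k\mapsto\bar\vt_k$ is a bijection onto trees with $Q(\bar\vt_k,\bar\vt_{k-1})\geq 1$, is exactly the fringe-distribution identity of Definition~\ref{fringedef}; iterating it down to $k=0$ yields total mass one and hence uniquely specifies the sin-tree law $\varpi^{EF}$.

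The principal obstacle I expect is the careful statement and justification of the combinatorial identity above: one has to implement the bijection between ``extended fringes of level $k$ at $v$'' and ``pairs $(v_k,\text{length-}k\text{ descending path to }v)$ with fringe $\bar\vt_k$ at $v_k$'', being careful to account for each of the $Q(\bar\vt_j,\bar\vt_{j-1})$ choices at every step without over-counting when $\bar\vt_j$ has several children isomorphic to $\bar\vt_{j-1}$. Once this bookkeeping is in place, the fringe-distribution property does all the remaining analytic work, exactly mirroring the argument in~\cite{aldous-fringe}*{Propositions 10 and 11}.
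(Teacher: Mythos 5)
Your proof is correct and is essentially the argument the paper points to when it says the Lemma ``follows by adapting the proof of~\cite{aldous-fringe}*{Propositions 10 and 11}.'' The combinatorial identity $\fP_n^k[(\vt_0,\dots,\vt_k)] = \fP_n^0[\bar\vt_k]\prod_{j=1}^k Q(\bar\vt_j,\bar\vt_{j-1})$ is exact: given a vertex $u$ with fringe $\bar\vt_k$, the descending path is chosen one level at a time, at each level $j$ there are exactly $Q(\bar\vt_j,\bar\vt_{j-1})$ admissible children, and removing any such child from $\bar\vt_j$ leaves a tree isomorphic to $\vt_j$ regardless of which one is removed; the map $v \mapsto (v_k,\text{path})$ is a bijection so there is no over-counting. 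The Scheff\'e upgrade on the countable discrete space $\bbT^{k+1}$ is standard once one passes to an a.s.-convergent subsequence of $\fP_n^0(\vs)$ simultaneously for all $\vs\in\bbT$.

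One inaccuracy worth flagging, though it does not damage the proof: the assertion $|B(\cT_n,k-1)|/n=o_\pr(1)$ does \emph{not} follow merely from $|\cT_n|\to\infty$ and $\varpi$ being concentrated on $\bbT$. For a sequence of star graphs one has $\fP_n^0\probc\delta_\bullet\in\cM_\pr(\bbT)$ while $|B(\cT_n,1)|=n$. What saves the step is the hypothesis that $\varpi$ is a fringe distribution: that is precisely what makes $\varpi^{EF}$ restricted to $\bbT^{k+1}$ a probability measure, and then Fatou applied to your pointwise limits over non-degenerate sequences already forces $\sum_{\text{non-deg.}}\fP_n^k\probc 1$, hence the degenerate mass (which equals $|B(\cT_n,k-1)|/n$) tends to zero automatically. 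So you should either justify the bound via the fringe-distribution identity or, more cleanly, simply drop it and let Scheff\'e do the work, as your last paragraph in fact does.
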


Fringe convergence and extended fringe convergence imply convergence of functionals, such as the degree distribution. For example, letting $\cT_{\varpi} \sim \varpi$ with root denoted by $0$ say, convergence in notion~\eqref{it:fringe-a} in particular implies that for any $k\geq 0$, 
\begin{align}
\label{eqn:deg-convg-fr}
	\frac1n\cdot\#\set{v\in \TT_n: \deg(v) = k+1} \convp \prob(\deg(0,\cT_{\varpi})=k).
\end{align}
However, both convergences give much more information about the asymptotic properties of $\set{\cT_n:n\geq 1}$ beyond its degree distribution.

\section{Limit objects for Macroscopic delays}\label{limobj}

{\subsection{Intuition behind limiting objects}\label{intuition}
Before formally exhibiting the local weak limit, we give an intuitive picture of the objects used in its description. In the context of dynamic networks, local weak limits are often convenient to describe in terms of ``continuous time'' branching processes (CTBP) mimicking the original network process. This enables a description of the local weak limit in terms of the CTBP randomly stopped at an independent exponential time \cite{rudas-2,banerjee2022co,banerjee2023local}. Thus, we start by considering a continuous time analog $\Cod$ of the current network process.  Start with one vertex $\Cod(0) = \set{\rho}$ at time $t=0$, and for each $t \ge 0$, a new vertex is born into the system at rate $|\Cod(t)|$. A new vertex $v$ born, say at time $t$, independently samples a delay variable $\eta_v$ and uses it to connect to a vertex in $\Cod(t-\eta_v)$ with probability proportional to its degree in $\Cod(t-\eta_v)$. 

The first question is the relationship between the delay distribution $F_\eta$ of the delay $\eta$ in continuous time and the original delay $\xi \sim \mu$ of the discrete time network process. From the dynamics of $\Cod$, it is clear that the size of the process $\set{|\Cod(t)|:t\geq 0}$ has the same distribution as a rate one Yule process; standard results about the Yule process imply that $e^{-t}|\Cod(t)| \stackrel{a.s., \bL^2}{\to} W$ where $W\sim \exp(1)$. 
Thus, heuristically, one should have, 
\[1- \xi \approx \frac{|\Cod(t-\eta)|}{|\Cod(t)|} \approx e^{-\eta}. \]
This gives $\eta \approx -\log(1-\xi)$ giving an explicit transformation of the continuous time delay $\eta$ in terms of the original delay $\xi$. 

Now, we heuristically construct the reproduction point process that will be used to describe the CTBP and, in turn, the local limit. For some large time $t>0$, consider a vertex $v \in \Cod(t)$, born at time $B(v) < t$, and suppose that $v$ has already attached to $i$ vertices after its birth (think of these as children of $v$) at times $\sigma_1 + B(v)<\dots <\sigma_i + B(v) \le t$. Then the instantaneous rate at which $v$ gives birth to a new child at time $t$ is
\begin{align*}
&\int_0^{t-B(v)}\left(\sum_{j=1}^i \frac{j+\alpha}{2+\alpha}\ind\set{t - a \in [B(v) + \sigma_{j-1}, B(v) + \sigma_j)}\right)\frac{|\Cod(t)|}{|\Cod(t-a)|}dF_\eta(a)\\
& \approx \int_0^{t-B(v)}\left(\sum_{j=1}^i \frac{j+\alpha}{2+\alpha}\ind\set{t - a \in [B(v) + \sigma_{j-1}, B(v) + \sigma_j)}\right)e^a dF_\eta(a).
\end{align*}
In particular, the above implies that for $x>0$, the instantaneous reproduction rate of $v$ at time $\sigma_i + B(v) + x$, given no births in $[\sigma_i + B(v),\sigma_i + B(v) + x)$, is approximately
\begin{align*}
&\int_0^{\sigma_i+x}\left(\sum_{j=1}^i \frac{j+\alpha}{2+\alpha}\ind\set{\sigma_i + B(v) + x - a \in [B(v) + \sigma_{j-1}, B(v) + \sigma_j)}\right)e^a dF_\eta(a)\\
&=\frac{1}{2+\alpha}\left[ \sum_{j=1}^{i} \int_{x+\sigma_i - \sigma_j}^{x+\sigma_i - \sigma_{j-1}} (j+\alpha) e^a dF_{\eta}(a) + \int_0^x (i+1+\alpha) e^{a} dF_{\eta}(a)\right].
\end{align*}
We will later formally define these rates in terms of `hazard rates' for the associated reproduction point processes. 

From the above heuristic, it is natural to expect that the local limit will be a CTBP driven by the above point processes, stopped at an independent exponential time $T_1$. To guess its exact distribution, let $V_t$ denote a randomly selected vertex in $\Cod(t)$, born at time $B(V_t)$, for some large $t>0$. Then its {\bf age} at time $t$, $\age(V_t) = t- B(V_t)$, satisfies 
\begin{equation}
    \label{eqn:958}
    \pr(\age(V_t)> a|\Cod(t)) = \frac{|\Cod(t-a)|}{|\Cod(t)|}\approx e^{-a} \Rightarrow \age(V_t) \approx \exp(1). 
\end{equation}
$T_1$ typically corresponds to the age of a uniformly chosen vertex when the network is large, and thus the above points to $T_1\approx \exp(1)$.

In subsequent sections, we will formalize the above ideas, culminating in the proof of the local limit. }

\subsection{A branching process driven by point processes with memory}
\label{sec:lim-obj-macro}
In the setting of macroscopic delays, we restrict ourselves to the affine Preferential attachment setting with attachment function $f(k) = k+\alpha$ for $k\geq 1$, for a fixed parameter $\alpha\geq 0$. We will write the corresponding model in Definition~\ref{defn:model} as $\cL(\gb \equiv 1, \mu, \alpha)$, where now the delay distribution $\mu$ is supported on $[0,1]$.  In the construction of the continuous-time branching process~\cite{jagers-ctbp-book,jagers-nerman-1,jagers-nerman-2}, one formulation is via describing a point process $\xi$ with inter-arrival times $\set{\sE_{(i-1) \leadsto i}: i\geq 1}$, where for each $i\geq 1$, conceptually $\sE_{(i-1) \leadsto i}$ (in the branching process) has the interpretation as the amount of time for a vertex to go from having $(i-1)$ to $i$ children.  We will now specify a point process via recursive construction of its inter-arrival times.  Recall the delay distribution random variable $\xi\in [0,1]$ from Definition~\ref{defn:model}. Define random variable $\eta$ by,
\begin{align}
\label{eqn:eta-dist-def}
\eta { =} -\log(1-\xi) \text{ with } \pr(\eta \in dx){ =} dF_\eta(x), \text{ for } x\in [0,\infty]. 
\end{align}
Note that $\xi = 1$ corresponds to the event $\eta = \infty$.   Let $\sigma_i = \sum_{j=1}^i \sE_{j-1\leadsto j}$, with $\sigma_0\equiv 0$. The construction proceeds as follows: 
\begin{enumeratea}
	\item {\bf Base case $\sE_{0 \leadsto 1}$:}  Define the hazard function $h_{0\leadsto 1}(\cdot)$ via, 
\begin{align}
\label{eqn:h01-hazard}
h_{0\leadsto 1}(x):= \frac{1+\alpha}{2+\alpha}\int_{0}^x e^u dF_\eta(u), \qquad x\in [0,\infty].
\end{align}
Let $\sE_{0\leadsto 1}$ be the random variable on $[0,\infty]$ with the above hazard rate so that for any $x$, $\pr(\sE_{0\leadsto 1} > s) = \exp(-\int_0^s h_{0\leadsto 1}(x) dx)$.  

	\item {\bf General case (general $i$):} Having constructed $\set{\sE_{j-1 \leadsto j}: 1\leq j \leq i}$, conditional on the above sequence, consider the hazard function for $x> 0$,
\begin{align}
\label{eqn:h-gen-hazard}
h_{i\leadsto i+1}(x) := \frac{1}{2+\alpha}\left[ \sum_{j=1}^{i} \int_{x+\sigma_i - \sigma_j}^{x+\sigma_i - \sigma_{j-1}} (j+\alpha) e^u dF_{\eta}(u) + \int_0^x (i+1+\alpha) e^{u} dF_{\eta}(u)\right].  
\end{align}
	 Let $\sE_{i \leadsto i+1} >0$ a.s. be the random variable with the above hazard rate. 
\end{enumeratea}
 
\begin{figure}[htbp]
	\centering
		\begin{tikzpicture}
		[scale=.8, background rectangle/.style=
		     {draw=blue!30,fill=blue!5,rounded corners=2ex},
		   show background rectangle]
		\draw[very thick,blue, |-] (0,0) -- (11,0);
				\draw[very thick,red, |->] (11,0) -- (16,0);
			\node at ( 14,0) [regular polygon, regular polygon sides= 6, draw=red!50,fill=red!100, label = above: $\sigma_3+x$] {};
		   \node at (-1,0) [draw = red!100, fill = blue!20] {$\mathcal{P}_{\Ma,\alpha}$};
			\node at ( 4,0) [circle,draw=blue!50,fill=blue!100, label = above: $\sigma_{1}$] {};
			\node at ( 6,0) [circle,draw=blue!50,fill=blue!100, label = above: $\sigma_{2}$] {};
			\node at ( 11,0) [circle,draw=blue!50,fill=blue!100, label = above: $\sigma_{3}$] {};
			\draw[very thick,red, |-|] (0,-1) -- (4,-1);
			\node at ( 2, -1) [label = below: $\sE_{0\leadsto 1}$] {};
			\draw[very thick,red, |-|] (4,-2) -- (6,-2);
			\node at ( 5, -2) [label = below: $\sE_{1\leadsto 2}$] {};
			\draw[very thick,red, |-|] (6,-3) -- (11,-3);
			\node at ( 9, -3) [label = below: $\sE_{2\leadsto 3}$] {};
		\end{tikzpicture}	
		
	\caption{Showing the construction of $\sE_{3 \leadsto 4}$ conditional on the preceding inter-arrival times and the corresponding point process $\cP_{\Ma,\alpha}$ in the Macroscopic regime.  }
	\label{fig:point-process-macro}
\end{figure}

The following gives a simpler representation of the above hazard rates, which can be verified from the definition { by reorganizing the terms in the sum appearing in \eqref{eqn:h-gen-hazard}}. 

\begin{lem}
    \label{lem:hazard-rate-sum}
    Let $h(x) =  \frac{1}{2+\alpha}\int_{0}^x e^u dF_\eta(u)$. Then,
    \[ h_{i\leadsto i+1}(x) = \sum_{j=0}^i h(x+\sigma_i -\sigma_j) + \alpha h(x + \sigma_i), \qquad x\geq 0.  \]
\end{lem}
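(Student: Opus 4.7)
\medskip
\noindent\textbf{Proof proposal.}
The plan is to match the two expressions by a straightforward substitution followed by Abel summation, since both sides are linear combinations of the same basic integrals $\int_0^{t} e^u\,dF_\eta(u)$ evaluated at $i+1$ shifted points. There is no genuine probabilistic obstacle; the statement is essentially an algebraic rearrangement of the definition~\eqref{eqn:h-gen-hazard}, and the only thing to check is that the coefficients match after telescoping.

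Concretely, I would introduce the notation
\[
a_j := x + \sigma_i - \sigma_j, \qquad j = 0,1,\dots,i,
\]
so that $a_0 = x+\sigma_i$, $a_i = x$, and $a_0 \ge a_1 \ge \cdots \ge a_i$, and set
\[
I_j := \int_0^{a_j} e^u\, dF_\eta(u), \qquad \text{so that } \quad h(a_j) = \frac{I_j}{2+\alpha}.
\]
With this notation, the right-hand side of the claim becomes
\[
\sum_{j=0}^{i} h(a_j) + \alpha\, h(a_0) = \frac{1}{2+\alpha}\left[(1+\alpha) I_0 + \sum_{j=1}^{i} I_j\right],
\]
using $\sigma_0 = 0$.

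On the left-hand side, the substitution converts the $j$-th integral in~\eqref{eqn:h-gen-hazard} into $I_{j-1}-I_j$ and the last integral into $I_i$, giving
\[
(2+\alpha)\, h_{i\leadsto i+1}(x) = \sum_{j=1}^{i}(j+\alpha)(I_{j-1}-I_j) + (i+1+\alpha)\, I_i.
\]
Reindexing the first term via $k = j-1$ and collecting coefficients of each $I_k$ (Abel summation), the internal terms telescope: the coefficient of $I_0$ is $1+\alpha$, each interior $I_k$ (for $1 \le k \le i-1$) gets coefficient $(k+1+\alpha) - (k+\alpha) = 1$, and the coefficient of $I_i$ collapses to $-(i+\alpha) + (i+1+\alpha) = 1$. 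This exactly reproduces the RHS computed above, finishing the proof. The main (and only) thing to be careful with is tracking the boundary indices in the Abel summation, which I would display in one aligned block to make the coefficient match transparent.
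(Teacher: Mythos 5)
Your proof is correct and is exactly the ``easy verification from the definition'' that the paper alludes to without spelling out: the substitution $a_j = x + \sigma_i - \sigma_j$ turns each shell integral in~\eqref{eqn:h-gen-hazard} into $I_{j-1}-I_j$, and the Abel/telescoping step leaves coefficient $1+\alpha$ on $I_0$ and $1$ on each of $I_1,\dots,I_i$, matching $(1+\alpha)I_0 + \sum_{j=1}^i I_j$ on the right. No gaps; the index bookkeeping you flagged is the only place one could slip, and you handled it correctly.
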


As shown in Theorem~\ref{thm:macroscopic-local}, the local weak limit of our network model is given by a branching process, with reproductions driven by $\cP_{\Ma, \alpha}$, stopped at an independent exponential time. We now define these objects.
\begin{defn}[BP in the Macroscopic regime]
\label{def:limit-bp-macro}
	Let $\BP_{\Ma,\alpha}(\cdot)$ be a continuous-time branching process started with one individual at time $t=0$, where each individual has offspring distribution $\cP_{\Ma, \alpha}$ defined above. Let $T_{1}$ be an $\exp(1)$ random variable, independent of $\BP_{\Ma,\alpha}$, and write $\varpi_{\Ma,\alpha}$ for the distribution of $\BP_{\Ma, \alpha}(T_1)$, viewed as a random finite rooted tree on $\bbT$, where we retain only genealogical information between individuals in $\BP_{\Ma, \alpha}(T_1)$. 
\end{defn}

\subsection{Alternate description of \texorpdfstring{$\BP_{\Ma}$}{BP-Mac} using edge branching process}\label{edgedesc}

{ Although the above branching process is natural in view of Section \ref{intuition} and is used in the rigorous proof of local weak convergence to the limit, its non-Markovian evolution makes it highly challenging to extract more quantitative information (like the limiting degree distribution exponent). A key insight that we present now is a `dual' description of the branching process $\BP_{\Ma}$ in terms of another branching process $\BP_{\Ma,\ve}$ where the \emph{edges reproduce} (instead of vertices). As we will see later, this construction uncovers a certain amount of independence in $\BP_{\Ma,\ve}$, thereby making it amenable to powerful branching process techniques.} 

Consider the branching process $\BP_{\Ma}$ in Def.~\ref{def:limit-bp-macro} (we will suppress the dependence on $\alpha$ to ease notation).  Let $\set{\Zma(t):t\geq 0}$ denote the process representing the number of children of the root, namely for any $t\geq 0$, $\Zma(t) = \#\set{i: \sum_{j=1}^i \cE_{(j-1)\leadsto j} \leq t}$. By Theorem~\ref{thm:macroscopic-local} below, the limit degree distribution $p_{\Ma}$ is just the p.m.f of the random variable $\Zma(T_1)+1$ where $T_1\sim\exp(1)$ independent of $\Zma$.

Let us first explain the origin of this construction in the no-delay model where the network evolves using ``pure'' preferential attachment, i.e., a new vertex enters the system at each discrete time step and attaches to an existing vertex with probability proportional to the degree of the existing vertex. It is well known (e.g.,~\cite{krapivsky2005network,vazquez2003growing}) that this model is equivalent to the following \emph{edge-copying} model (see Figure~\ref{fig:three graphs}):

\begin{enumeratea}
    \item At each stage, a new vertex $v$ enters the system and picks an edge in the existing graph uniformly at random to copy. Thus, conceptually, we can think of each edge reproducing at rate one, independent across edges. 
    \item Suppose the edge selected to copy is $\set{a,b}$. Then $v$ attaches to $a$ with probability $1/2$ and $b$ with probability $1/2$.
\end{enumeratea}

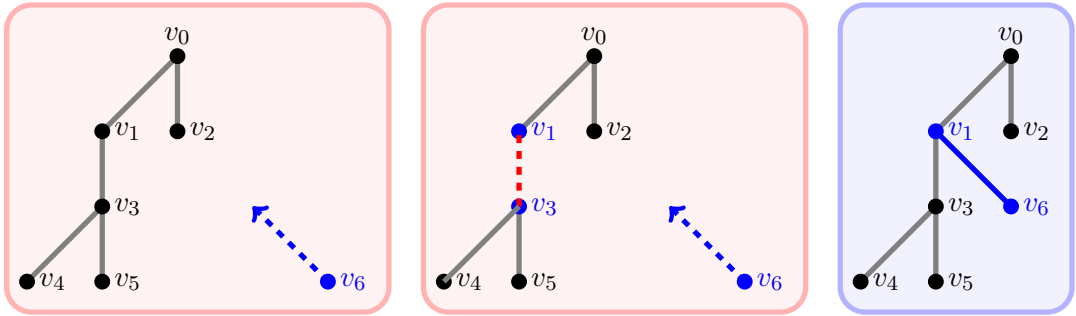
\begin{figure}[h]   
\centering
\begin{subfigure}[b]{0.3\textwidth}
\begin{tikzpicture}[background rectangle/.style=
		     {draw=red!30,fill=red!5,rounded corners=2ex},
		   show background rectangle]
  \draw[gray](0,0)--(0,-1);
  \draw[gray](0,0)--(-1,-1);
  \draw[gray](-1,-1)--(-1,-2);
  \draw[gray](-1,-2)--(-1,-3);
  \draw[gray](-1,-2)--(-2,-3);
    \filldraw(0,0) circle (2pt) node[above] {$v_0$};
   \filldraw(0,-1) circle (2pt) node[right] {$v_2$};
    \filldraw(-1,-1) circle (2pt) node[right] {$v_1$};
    \filldraw(-1,-2) circle (2pt) node[right] {$v_3$};
     \filldraw(-1,-3) circle (2pt) node[right] {$v_5$};
       \filldraw(-2,-3) circle (2pt) node[right] {$v_4$};
       \draw[dashed, blue,->](2,-3)--(1,-2);
       \filldraw[blue](2,-3) circle (2pt) node[right] {$v_6$};
\end{tikzpicture}
\end{subfigure}
\hspace{2ex}
\begin{subfigure}[b]{0.3\textwidth}
\begin{tikzpicture}[background rectangle/.style=
		     {draw=red!30,fill=red!5,rounded corners=2ex},
		   show background rectangle]
  \draw[gray](0,0)--(0,-1);
  \draw[gray](0,0)--(-1,-1);
  \draw[gray](-1,-2)--(-1,-3);
 \filldraw(0,0) circle (2pt) node[above] {$v_0$};
   \filldraw(0,-1) circle (2pt) node[right] {$v_2$};
    \filldraw[blue](-1,-1) circle (2pt) node[right] {$v_1$};
    \filldraw[blue](-1,-2) circle (2pt) node[right] {$v_3$};
     \filldraw(-1,-3) circle (2pt) node[right] {$v_5$};
       \filldraw(-2,-3) circle (2pt) node[right] {$v_4$};
         \draw[-, red, dashed](-1,-2)--(-1,-1);
           \draw[gray](-2,-3)--(-1,-2);   
              \draw[dashed, blue,->](2,-3)--(1,-2);
       \filldraw[blue](2,-3) circle (2pt) node[right] {$v_6$};
\end{tikzpicture}
\end{subfigure}  
\hspace{2ex}
\begin{subfigure}[b]{0.2\textwidth}
\begin{tikzpicture}[background rectangle/.style=
		     {draw=blue!30,fill=blue!5,rounded corners=2ex},
		   show background rectangle]
  \draw[gray](0,0)--(0,-1);
  \draw[gray](0,0)--(-1,-1);
  \draw[gray](-1,-1)--(-1,-2);
  \draw[gray](-1,-2)--(-1,-3);
  \draw[gray](-1,-2)--(-2,-3);
    \filldraw(0,0) circle (2pt) node[above] {$v_0$};
   \filldraw(0,-1) circle (2pt) node[right] {$v_2$};
    \filldraw[blue](-1,-1) circle (2pt) node[right] {$v_1$};
    \filldraw(-1,-2) circle (2pt) node[right] {$v_3$};
     \filldraw(-1,-3) circle (2pt) node[right] {$v_5$};
       \filldraw(-2,-3) circle (2pt) node[right] {$v_4$};
          \draw[-, blue](0,-2)--(-1,-1);
        \filldraw[blue](0,-2) circle (2pt) node[right] {$v_6$};
\end{tikzpicture}
\end{subfigure}
        \caption{$v_6$ is a new incoming vertex, and selects the edge connecting $v_1$ and $v_3$ to ``copy''; it then chooses either $v_1$ or $v_3$ with equal probability (in this case $v_1$).}
        \label{fig:three graphs}
\end{figure}

The dynamics above can be modified to incorporate affine attachment functions as well. It turns out, at least in the limit, one can construct similar dynamics in the delay regime, which allows significantly more tractable expressions for functionals of interest. The rest of this section is organized as follows:
\begin{enumeratei}
    \item We will describe an edge branching process $\BP_{\Ma, \ve}$ for the general macroscopic delay model with affine preferential attachment function with parameter $\alpha$.
    \item The offspring (children/immediate descendants) of $\BP_{\Ma, \ve}$ can be constructed using a simpler primitive branching process $\BP_{\ve}^{\circ}$ and a Poisson modulated immigration process $\chi(\cdot)$. 
\end{enumeratei}

\begin{defn}[Edge branching process $\BP_{\Ma, \ve}$]
    \label{def:bpmave}
    Consider the following branching process  $\BP_{\Ma,\ve}$: 
\begin{enumerateA}
    \item At time $0$, the population consists of two individuals $\tilde{v}_0, \tilde{v}_1$ connected by an edge $\ve_0$ directed from $\tilde{v}_1$ (child) to $\tilde{v}_0$ (parent).
    \item\label{item:rt} At time $t \ge 0$ units after its birth, an existing edge $\ve$ reproduces according to an inhomogeneous Poisson point process with rate $r(t):= \int_0^t e^s dF_{\eta}(s)$, giving birth to a new edge (and thereby, a new individual) in the system.
    \item At each reproduction epoch of edge $\ve$, the resulting new edge attaches to the parent vertex of $\ve$ with probability $1/(2 + \alpha)$ and to the child of $\ve$ with probability $(1 + \alpha)/(2 + \alpha)$.
\end{enumerateA}
Denote by $\set{\BP_{\Ma,\ve}(t) : t \ge 0}$ the \emph{entire set of descendants }  of $\tilde{v}_1$ at time $t$. Further let $\rho_{\ve}(t)$ denote the number of children (immediate descendants) of $\tilde{v}_1$ by time $t$. 
\end{defn}

Besides the description of $\BP_{\Ma,\ve}$ as a more conventional continuous-time branching process (without complex hazard rates), the added advantage of the above construction is that the degree of the root (and thus any other vertex) in this branching process evolves as the size of \emph{another branching process with immigration}.  

Recall the hazard rate function~\eqref{eqn:h01-hazard}.  We will let $\mvzeta_{\ve}$ denote the Poisson point process on $[0,\infty)$ with intensity measure, 
\begin{align}
    \label{eqn:intensity-measure} 
    \mu_{\mvzeta_{\ve}}(dt) = h(t) dt =  \left[\frac{1}{2 + \alpha} \int_0^t e^s dF_\eta(s)  \right] dt. 
\end{align}

\begin{enumeratea}
    \item Let $\BP^\circ_{\ve}$ denote a (continuous time) branching process started with one individual at time $t=0$ and { where individuals reproduce independently according to a Poisson process $\mvzeta_{\ve}$ with intensity measure $\mu_{\mvzeta_{\ve}}$.} Let $\{\BP^\circ_{\ve,i} : i \in \mathbb{N}_0\}$ be iid copies of $\BP^\circ_{\ve}$ with respective sizes given by $\{|\BP^\circ_{\ve,i}(t)| : i \in \mathbb{N}_0\}$.
    \item Let $\{\chi(t): t \ge 0\}$ denote a the Poisson point process with rate $r_{\chi}(t) := \frac{\alpha}{2 + \alpha} r(t), \, t \ge 0,$, with $r(\cdot)$ as in Def.~\ref{def:bpmave}~\eqref{item:rt}. Let $\{\theta_i : i \in \mathbb{N}_0\}$ denote the reproduction times of $\chi$, with $\theta_0=0$ (that is, $\chi(0)=1$).
\end{enumeratea}
    Consider the branching process with immigration $\BP_{\ve}$, starting with one individual, where the progeny of each existing individual grows according to (an independent copy of) $\BP^\circ_{\ve}$, and there is an additional incoming stream of individuals into the population at epochs of $\chi$. The size of this branching process is thus given by
\begin{equation}
    |\BP_{\ve}(t)| := \sum_{i=0}^{\chi(t)-1} |\BP^\circ_{\ve,i}(t - \theta_i)|, \ t \ge 0.
\end{equation}

\begin{prop}
    \label{prop:edge-equiv-bp} 
    We have the distributional equivalence
    \begin{equation}\label{eq:degeq}
        \set{\Zma(t)+1: t\geq 0}  \stackrel{d}{=} \set{\rho_{\ve}(t) : t \ge 0} \stackrel{d}{=} \set{|\BP_{\ve}(t)|: t\geq 0}.
    \end{equation}
    Thus,
    $ \set{\BP_{\Ma,\ve}(t): t \ge 0} \stackrel{d}{=} \set{\BP_{\Ma}(t): t \ge 0}.$
    Consequently, $\varpi_{\Ma}\stackrel{d}{=}\BP_{\Ma,\ve}(T_1)$ and, writing $D_{\Ma}$ for the limit degree random variable in Theorem~\ref{thm:macroscopic-local} with distribution  $p_{\Ma}(\cdot)$,  $ D_{\Ma} \stackrel{d}{=} |\BP_{\ve}(T_1)|$, where $T_1\sim \exp(1)$ independent of $\BP_{\ve}$.   
\end{prop}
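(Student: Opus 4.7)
The plan is to establish the process-level equality in~\eqref{eq:degeq} by showing that all three counting processes $\{Z_\Ma(t)+1\}$, $\{\rho_\ve(t)\}$ and $\{|\BP_\ve(t)|\}$ share the same conditional intensity (compensator) given their past jump times. By standard uniqueness for counting processes with a prescribed intensity (via sequential construction of inter-arrival times, or the martingale characterization), this will yield the three distributional equalities. I would then extend to the tree-level identity $\BP_{\Ma,\ve} \stackrel{d}{=} \BP_\Ma$ by iterating the root identity using the branching property, and evaluating at an independent $T_1\sim\exp(1)$ will deliver the consequences for $\varpi_\Ma$ and $D_\Ma$.

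For $Z_\Ma$: by the sequential hazard-rate construction of $\cP_{\Ma,\alpha}$ combined with Lemma~\ref{lem:hazard-rate-sum}, given $Z_\Ma(t^-)=i$ with previous jumps at $\sigma_1,\dots,\sigma_i$ (with $\sigma_0=0$), the intensity at time $t$ equals $(1+\alpha) h(t) + \sum_{j=1}^{i} h(t-\sigma_j)$, where $h(x)=r(x)/(2+\alpha)$. For $\rho_\ve$: when $\rho_\ve(t^-)=k+1$, the $\tilde v_1$-incident edges are $\ve_0$ (born at $0$, with $\tilde v_1$ as child) together with the $k$ edges to children born at previous jump times $\tau_1,\dots,\tau_k$ (each with $\tilde v_1$ as parent). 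Each edge born at $b$ reproduces as an inhomogeneous Poisson process of rate $r(t-b)$, and Poisson thinning by the attachment probabilities $(1+\alpha)/(2+\alpha)$ (to the child side) and $1/(2+\alpha)$ (to the parent side) yields total intensity for births of new $\tilde v_1$-incident edges equal to
\[
r(t)\cdot\tfrac{1+\alpha}{2+\alpha} + \sum_{j=1}^{k} r(t-\tau_j)\cdot\tfrac{1}{2+\alpha} \;=\; (1+\alpha) h(t) + \sum_{j=1}^{k} h(t-\tau_j),
\]
matching the previous expression. For $|\BP_\ve|$: with birth times $0=b_0<b_1<\dots<b_{k-1}$ of its $k$ individuals, offspring contribute total rate $\sum_{i=0}^{k-1} h(t-b_i)$ and immigration contributes rate $r_\chi(t) = \alpha h(t)$, summing once more to $(1+\alpha) h(t) + \sum_{i=1}^{k-1} h(t-b_i)$. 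Uniqueness of the law given the compensator then yields~\eqref{eq:degeq}.

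For the tree-level identity, I would argue by induction on depth via the branching property. In $\BP_{\Ma,\ve}$, each child $w$ of $\tilde v_1$ born at time $\tau_w$ has an edge $w\to\tilde v_1$ that plays, relative to $w$'s subtree, exactly the role of $\ve_0$ in the full process; mutual independence of the Poisson reproductions of distinct edges (built in by construction) ensures that the subtree rooted at $w$ is a time-shifted independent copy of $\BP_{\Ma,\ve}$, and distinct children produce conditionally independent subtrees given birth times. Since $\BP_\Ma$ satisfies the analogous branching property by definition, the matching of root-offspring processes above combined with inductive matching of subtrees yields $\BP_{\Ma,\ve} \stackrel{d}{=} \BP_\Ma$. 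Evaluating at $T_1 \sim \exp(1)$ independent of the processes then gives $\varpi_\Ma \stackrel{d}{=} \BP_{\Ma,\ve}(T_1)$ and $D_\Ma \stackrel{d}{=} Z_\Ma(T_1)+1 \stackrel{d}{=} |\BP_\ve(T_1)|$.

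The main obstacle will be the careful bookkeeping in the intensity matching: one must isolate the asymmetric role of $\ve_0$ (whose child-side reproduction contributes the factor $(1+\alpha)h(t)$ rather than merely $h(t)$) and see that precisely this asymmetry generates both the extra $\alpha h(t)$ term in Lemma~\ref{lem:hazard-rate-sum} and the immigration rate $r_\chi(t) = \alpha h(t)$ in the construction of $\BP_\ve$. A subsidiary point, vital for the branching-property step, is that the edge-branching description recovers an explicit independence across descendant subtrees—independence not at all apparent from the recursive hazard-rate description of $\BP_\Ma$—which is precisely what makes the dual description tractable for later analyses of degree exponents and condensation.
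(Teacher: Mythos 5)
Your proof is correct and takes essentially the same approach as the paper's, namely matching conditional intensities of the three counting processes via Lemma~\ref{lem:hazard-rate-sum} (with Poisson thinning for the edge-reproduction rates), then extending to the full tree via independence of the per-edge Poisson reproduction processes. The paper is more terse — it verifies the hazard-rate identity~\eqref{eqn:1112} directly for $|\BP_\ve|$ and invokes ``rate considerations'' for the remaining equalities and for independence — whereas you spell out the thinning calculation for $\rho_\ve$ and the branching-property step explicitly, but the underlying argument is the same.
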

\begin{proof}
To prove the proposition, it suffices to show~\eqref{eq:degeq} along with the independence of reproduction point processes across individuals in $\BP_{\Ma,\ve}$. The last distributional equality is immediate from rate considerations of the associated Poisson point processes. We will now show that the first and third objects in~\eqref{eq:degeq} have the same law.

    Write $\set{\tilde{\sigma_k}: k\geq 0}$ for the times of arrival of new individuals into the branching process $\BP_{\ve}(\cdot)$ starting with  $\tilde{\sigma}_0=0$ when the root of $\BP_{\ve}$ is the only vertex in the system. Write $\tilde{\cE}_{k\leadsto (k+1)} = \tilde{\sigma}_{k+1} -\tilde{\sigma}_k$ for the inter-arrival time for the branching process between the $k$-th and $k+1$-th individual. Then, directly from construction, it is easy to check that for any $s> 0$,
    \begin{equation}
        \label{eqn:1112}
        \pr(\tilde{\cE}_{k\leadsto (k+1)} > s \mid \tilde{\sigma}_1, \ldots, \tilde{\sigma}_k ) = \exp\bigg(-\int_0^s  \left(\sum_{i=0}^k h(u+\tilde{\sigma}_k-\tilde{\sigma}_i) + \alpha h(u + \tilde{\sigma}_k)\right) du\bigg).
    \end{equation}
    { Indeed for $\tilde{\cE}_{k\leadsto (k+1)}$ to exceed $s$, the existing individuals should not give birth and the immigration stream $\chi(\cdot)$ should not have an arrival in the interval $[\tilde{\sigma}_k, \tilde{\sigma}_k + s]$ and the above follows using the independence of the associated birth and immigration processes.}
    Comparing this to the hazard rates for $\Zma(\cdot)$ in Section~\ref{sec:lim-obj-macro} and using Lemma~\ref{lem:hazard-rate-sum} completes the proof. The independence of birth processes across individuals also follows via similar rate considerations.
\end{proof}

The above construction will play a crucial role in proving all the main results related to degree distribution tail behavior and analyzing condensation phenomena at the root, which are formally stated in Section~\ref{sec:main-res}.

\section{Main results}
\label{sec:main-res}

{ For the rest of this paper, we set  $\gb = 1$, namely the macroscopic regime,} and the model uses linear attachment with fixed affine parameter $\alpha \geq 0$. Further, we assume that $\pr(\xi =0) < 1$, { since otherwise one is in the no-delay regime,} and $\pr(\xi =1) < 1$ { so that we are not in the trivial regime where all vertices directly attach to the root}. Recall that in the setting {\bf without} delay, the limit degree distribution of the model (with associated degree exponent) is given by~\cite{bollobas2001degree,bollobas2003directed,barabasi1999emergence}:
\begin{equation}
    \label{eqn:no-delay-limit}
    p_{\nnd, \alpha }(k)  = (2 + \alpha) \frac{\Gamma(k + \alpha)\Gamma(3 + 2\alpha)}{\Gamma(k + 3 + 2\alpha)\Gamma(1 + \alpha)},~~\Rightarrow~p_{\nnd, \alpha}(k) \sim C/k^{3+\alpha}, \quad \text{as } k\to\infty . 
\end{equation}
As we describe the main results, the above can be viewed as something to compare and contrast in the setting with macroscopic delays. 

{ \subsection{Formal statement of results}
We now give precise statements of our results. Recall, as before, from~\eqref{eqn:deg-count} that for $k\geq 1$, $N_{k}(n)$ denotes the number of nodes in $\cT(n)$ with degree exactly $k$. 

\begin{thm}[Local weak limit, macroscopic regime]\label{thm:macroscopic-local}
\begin{enumeratea}
\item Consider the sequence of random trees
\[
\set{\cT(n):n\geq 2} \sim \cL(\gb \equiv 1, \mu, f(\cdot) = \cdot+\alpha)
\]
in the macroscopic regime with affine linear attachment function and delay distribution $\mu$ on $[0,1]$.   Then $\set{\cT(n):n\geq 1}$ converge in the expected fringe sense (Def.~\ref{def:local-weak}~\eqref{it:fringe-exp}) to $\varpi_{\Ma,\alpha}$ as in Definition~\ref{def:limit-bp-macro}. In particular, the degree distribution satisfies, 
	\[
 \E\left(\frac{N_k(n)}{n}\right) \to \pr\left(\sum_{j=1}^{k-1} \sE_{\sss j-1\leadsto j} \leq T_1 < \sum_{j=1}^{k} \sE_{\sss j-1\leadsto j}\right):=p_{\Ma, \alpha}(k), \qquad k\geq 1,
 \]
	where $T_1$ is a rate one exponential random variable independent of $\set{\sE_{\sss j-1 \leadsto j}:j\geq 1 }$. 
    \item The limit degree distribution $D_{\Ma, \alpha}$ with pmf $p_{\Ma, \alpha}$ satisfies 
    \[\E(D_{\Ma}) = \frac{(2+ \alpha) + \alpha \pr(\eta < \infty)}{2 + \alpha-\pr(\eta < \infty)}.\]
 \item  If $\pr(\xi = 1) = \pr(\eta =\infty) =0$, then the sequence of random trees $\set{\cT(n):n\geq 2} \sim \cL(\gb \equiv 1, \mu, f(\cdot) = \cdot+\alpha)$ converges in probability in the extended fringe sense (Definition~\ref{def:local-weak}~\ref{it:fringe-b}) to the unique infinite random {\tt sin}-tree with fringe distribution  $\varpi_{\Ma,\alpha}$.
 \end{enumeratea}
\end{thm}
\begin{rem}
    Note that (a) holds, irrespective of whether the delay distribution has mass at $1$ (namely $\mu(\{1\})>0$) or not. Thus, there is always convergence in the expected fringe sense for this model. However, if $\mu(\{1\})>0$, then one can check that one cannot have local weak convergence (see Remarks \ref{lwcrem} and \ref{rem:star}), and (c) says that this is the only obstacle for convergence in probability in the extended fringe sense, which is stronger than local weak convergence. 
\end{rem}

Besides giving asymptotics for empirical distributions of local functionals like the degree distribution, the local limit results also imply convergence of global functions. We give an example. Let $\cT(n)$ be the random tree as above and let $\vA_n$ denote the adjacency matrix of $\cT(n)$ and let $\set{\lambda_i^{\sss(n)}:1\leq i\leq n}$ denote the eigen-values and $\hat{\mu}_n = n^{-1} \sum_{i=1}^n \delta_{\lambda_i^{\sss(n)}}$ denote the empirical spectral distribution of $\vA_n$ where $\delta$ is the Dirac delta function. 

\begin{cor}
\label{cor:rand-adj}
Under the assumptions of Theorem~\ref{thm:macroscopic-local}(c), there exists a deterministic distribution $\mu_\infty$ (whose specific form depends on the parameters $\alpha, \mu$) such that $\hat{\mu}_n \convd \mu_\infty$. The limit distribution has an infinite set of atoms in $\bR$.
\end{cor}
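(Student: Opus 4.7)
The plan is to transfer the extended fringe convergence $\cT(n)\probcrf \cT_\infty$ from Theorem~\ref{thm:conden-condition}(b) into spectral convergence via the resolvent / Stieltjes transform method, and then to force infinitely many atoms in $\mu_\infty$ by exhibiting a countable family of ``twin-pendant'' motifs occurring with positive density in $\cT_\infty$.

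For the first step, I would fix $z$ in the upper half plane and consider the Stieltjes transform $g_n(z) := n^{-1}\tr((\vA_n - zI)^{-1}) = n^{-1}\sum_{v\in\cT(n)} R_n(v,v;z)$ where $R_n := (\vA_n - zI)^{-1}$. Since $\cT(n)$ is a tree, the Schur complement identity expresses $R_n(v,v;z)$ recursively in terms of the resolvents of the subtrees hanging off $v$'s neighbors. Standard tree-resolvent estimates (as in Bordenave--Lelarge) yield $|R_n(v,v;z)|\le 1/|\Im z|$ together with an approximation of $R_n(v,v;z)$, up to error $O_{\Im z}(2^{-k})$, by a bounded continuous functional of the extended fringe $F_k(v,\cT(n))$. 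Averaging over $v$ and invoking $\cT(n)\probcrf\cT_\infty$ yields $g_n(z)\convp g_\infty(z) := \E[R_{\cT_\infty}(0,0;z)]$ in probability, uniformly on compact subsets of the upper half plane. The Stieltjes continuity theorem then delivers $\hat{\mu}_n\convd \mu_\infty$, where $\mu_\infty$ is the deterministic probability measure with Stieltjes transform $g_\infty$.

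For the atoms, I would use a symmetrization / pendant-motif argument in the spirit of Salez and Bhamidi--Evans--Sen. Fix a finite rooted tree $\tau$, an eigenvalue $\lambda$ of $\vA_\tau$, and a corresponding eigenvector $\vx$; let $\bs_\tau\in\bbT$ denote the rooted tree whose root has exactly two children, each supporting an isomorphic copy of $\tau$. For every $v\in\cT(n)$ with fringe $F_0(v,\cT(n))\simeq \bs_\tau$, placing $\vx$ on one pendant copy of $\tau$, $-\vx$ on the other (via the isomorphism), and $0$ elsewhere produces an eigenvector of $\vA_n$ with eigenvalue $\lambda$: antisymmetry cancels the coupling at $v$, and the vector vanishes at $v$'s parent. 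Taking $\tau = P_m$, the path on $m$ vertices, no vertex of a pendant $P_m$ has two children of its own, so the motifs at distinct vertices are vertex-disjoint and the corresponding eigenvectors are linearly independent. Consequently, with $\lambda_m = 2\cos(\pi/(m+1))$,
\[
\hat{\mu}_n(\{\lambda_m\}) \;\ge\; n^{-1}\#\{v\in\cT(n): F_0(v,\cT(n))\simeq \bs_{P_m}\} \;\convp\; \varpi_{\Ma,\alpha}(\bs_{P_m}).
\]
A standard bump-function argument together with the weak convergence from the first step upgrades this to $\mu_\infty(\{\lambda_m\})\ge \varpi_{\Ma,\alpha}(\bs_{P_m})$. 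Since $\BP_{\Ma,\alpha}$ has positive probability of producing any prescribed finite offspring pattern before the exponential time $T_1$, $\varpi_{\Ma,\alpha}(\bs_{P_m})>0$ for every $m\ge 1$, and the $\lambda_m$ are pairwise distinct, yielding infinitely many atoms.

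The principal technical obstacle lies in the first step: Bordenave--Lelarge-type resolvent arguments are cleanest under bounded or uniformly integrable degree, whereas here the degrees of early-born vertices grow with $n$. I expect the cleanest route to exploit the non-condensation hypothesis of Theorem~\ref{thm:conden-condition}(b): since $\pr(\eta=\infty)=0$, Theorem~\ref{thm:conden-condition}(a) gives $\E D_{\Ma,\alpha}<\infty$, placing the limit sin-tree within the standard unimodular-resolvent framework. Combined with a truncation of $\vA_n$ at a slowly growing degree cutoff $D_n\to\infty$ (with truncation error controlled via a rank-perturbation estimate and the degree tail bounds from Corollary~\ref{cor:degree-tail-macro-non-cond}), this should yield the required convergence of $g_n$.
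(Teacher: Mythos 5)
The paper's own proof is a one-line citation: the extended fringe convergence from Theorem~\ref{thm:conden-condition}(b) is plugged into \cite{bhamidi2012spectra}*{Theorem 4.1}, which asserts exactly this statement for any sequence of random trees converging in the probability extended fringe sense to a {\tt sin}-tree. Your argument is, in effect, a from-scratch reconstruction of the content of that theorem, and the two ingredients you supply---resolvent recursion along the extended fringe decomposition for the existence of a deterministic $\mu_\infty$, and the antisymmetric twin-pendant ($\tau=P_m$) eigenvector construction producing atoms at $2\cos(\pi/(m+1))$ with mass at least $\varpi_{\Ma,\alpha}(\bs_{P_m})>0$---are precisely the ideas behind the cited result, so the two proofs are essentially the same modulo the paper outsourcing the work. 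The vertex-disjointness argument for the $P_m$-motifs (a vertex on a pendant path has at most one child, so cannot carry the two-pendant pattern itself) and the Portmanteau step transferring the atom lower bound to $\mu_\infty$ are both correct. One overclaim to repair: the $O_{\Im z}(2^{-k})$ error in approximating $R_n(v,v;z)$ by a functional of $F_k(v,\cT(n))$ is not justified when degrees are unbounded, since the Schur complement recursion on a tree has no uniform geometric contraction rate without degree control. No rate is actually needed: the uniform bound $|R_n(v,v;z)|\le 1/|\Im z|$ makes bounded convergence available once one establishes pointwise continuity of $\TT\mapsto R_{\TT}(0,0;z)$ at a.e.\ realization of the limit {\tt sin}-tree, and the hypothesis $\E(D_{\Ma,\alpha})<\infty$ from Theorem~\ref{thm:conden-condition}(a) is what guarantees (via the unimodular-graph theory for graphs with integrable degree, as in Bordenave--Lelarge) a.s.\ essential self-adjointness of the limiting adjacency operator and hence this continuity. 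Your instinct that non-condensation is exactly what keeps the degree tail from breaking the resolvent argument is correct; the truncation-plus-rank-perturbation fallback you sketch would also work but is not needed once the uniform resolvent bound is exploited.
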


The proof of this result follows directly by combining the extended fringe convergence in Theorem~\ref{thm:macroscopic-local}(c) with Theorem 4.1 in~\cite{bhamidi2012spectra}.

Now, we address the limiting degree distribution asymptotics. To start off, consider an illustrative special case where one vacillates between complete information and no information (thus connecting directly to the root). The following corollary to the local limit result gives an exact formula for the limiting degree distribution in this case.
\begin{cor}
\label{cor:special-case-macro}
    Fix $p\in (0,1)$ and suppose the delay distribution $\mu = \Bern(1-p) = p\delta_{0} + (1-p) \delta_{1}$. Then, for $\alpha = 0$, the limit degree distribution is given by 
    \[p_{\Ma}(k) = \frac{4(k-1)!p^k}{\prod_{j=1}^k (jp+2)} \sim C k^{-(1+2/p)}, \qquad \text{as } k\to \infty,\]
    for a constant $C\in (0,\infty)$. In particular, since the degree exponent is $(1+2/p) >3$, the degree distribution tail is strictly lighter than the regime with no delay. 
\end{cor}
The reader can compare this result with known results in the no-delay setting in~\eqref{eqn:no-delay-limit}. In the presence of delay, with ``older vertices'' getting a higher chance to reinforce their degree, one might imagine that the limiting degree distribution (which gives the degree behavior of ``younger vertices" near the fringe) should have \emph{lighter} tails. In the setting of Corollary~\ref{cor:special-case-macro}, this is seen to be true.

Under a wide array of settings, if $\pr(\xi = 1) =0$ (i.e., no ``trivial'' mechanism for root connectivity), we will see that the degree exponent is always {\bf heavier} than the model without delay. The goal of the next few results is to understand the tail behavior of the degree distribution when $\pr(\xi = 1) = \pr(\eta =\infty) = 0$. As before, we will phrase our results in terms of the transformed delay distribution $\eta$ instead of the original distribution $\xi$.  
The edge branching process construction described in Section~\ref{edgedesc} will play a major role both in the proof of these results. 

Recall the Poisson point process $\mvzeta_{\ve}$ with intensity measure $\mu_{\mvzeta_{\ve}}$ as in~\eqref{eqn:intensity-measure}. The following lemma relates the Laplace transform of the intensity measure to the moments of the delay random variable $\eta$.

\begin{lemma}\label{lapmom}
Assume $\pr(\eta =\infty) = 0$. For any $\theta>0$, we have
$$
\int_0^\infty e^{-\theta t} \mu_{\mvzeta_{\ve}}(dt) = \frac{1}{\theta(2+ \alpha)}\E\left(e^{(1-\theta)\eta}\right).
$$
\end{lemma}

\begin{proof}
    By interchanging the order of integrals, we obtain
\begin{align*}
\int_0^\infty e^{-\theta t} \mu_{\mvzeta_{\ve}}(dt) &= \int_0^{\infty}e^s\int_s^{\infty} \frac{e^{-\theta t}}{2+\alpha}dt \,dF_\eta(s)\\
&= \frac{1}{\theta(2+ \alpha)}\int_0^{\infty}e^{(1-\theta)s}dF_\eta(s)
= \frac{1}{\theta(2+ \alpha)}\E\left(e^{(1-\theta)\eta}\right).
\end{align*}
\end{proof}

\begin{ass}[Malthusian rate and $\hat\mvzeta\log^+\hat\mvzeta$ condition]
\label{ass:xlogx}
     Let $\theta_0 := \inf\{\theta \ge 0: \E\left(e^{(1-\theta)\eta}\right) < \infty\}$. Then, we assume that $\lim_{\theta \downarrow \theta_0} \frac{1}{\theta(2+ \alpha)}\E\left(e^{(1-\theta)\eta}\right) > 1$ (which trivially holds if $\theta_0 = 0$).
     By Lemma \ref{lapmom}, this implies that there exists a unique `Malthusian rate' $\gl > 0$ solving the equation, 
    \begin{align}
        \label{eqn:malthus-edge}
        \int_0^\infty e^{-\gl t} \mu_{\mvzeta_{\ve}}(dt) = 1. 
    \end{align}
     With the above $\gl$, define the random variable $\hat\mvzeta_{\ve}(\gl) = \int_0^\infty e^{-\gl t} \mvzeta_{\ve}(dt)$. Assume $$\E(\hat\mvzeta_{\ve}(\gl)\log^+(\hat\mvzeta_{\ve}(\gl))) < \infty.$$
\end{ass}

The following lemma connects $\gl$ to the distribution of $\eta$ and consequently gives a range for $\gl$.

\begin{lemma}\label{lem:lambdarange}
Assume $\pr(\eta =\infty) = 0$. The parameter $\gl$, defined in Assumption~\ref{ass:xlogx}, satisfies
\begin{equation}\label{eq:lambdaeta}
\E\left(e^{(1-\gl)\eta}\right) = (2 + \alpha)\gl.
\end{equation}
In particular, $\gl \in (1/(2+\alpha), 1)$.
\end{lemma}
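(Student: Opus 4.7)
The plan is to reduce the Malthusian condition~\eqref{eqn:malthus-edge} to a moment-generating-function identity for $\eta$ via Fubini, and then pin down the range of $\lambda$ by elementary monotonicity and evaluation at the two endpoints $\lambda=1$ and $\lambda=1/(2+\alpha)$. Because we are in the non-condensation regime where $\pr(\eta<\infty)=1$, plugging the density $h(t)=\tfrac{1}{2+\alpha}\int_0^t e^s\,dF_\eta(s)$ of $\mu_{\mvzeta_\ve}$ into~\eqref{eqn:malthus-edge} and swapping the order of integration (Tonelli, as the integrand is nonnegative) yields
\[
1 \;=\; \frac{1}{2+\alpha}\int_0^\infty e^s\Bigl(\int_s^\infty e^{-\lambda t}\,dt\Bigr)dF_\eta(s) \;=\; \frac{\E\bigl(e^{(1-\lambda)\eta}\bigr)}{(2+\alpha)\,\lambda},
\]
which, after cross-multiplying by $(2+\alpha)\lambda$, is exactly~\eqref{eq:lambdaeta}. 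The inner $t$-integral converges because $\lambda>0$, and the outer integral is finite by Assumption~\ref{ass:xlogx} for $\lambda$ sufficiently close to $1$.

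For the range I would introduce $\phi(\lambda):=\E(e^{(1-\lambda)\eta})-(2+\alpha)\lambda$, which is strictly decreasing wherever finite: both summands are decreasing in $\lambda$, and the linear term $-(2+\alpha)\lambda$ alone gives strict monotonicity. Consequently $\phi$ has at most one zero, which by Assumption~\ref{ass:xlogx} is precisely the $\lambda$ of~\eqref{eqn:malthus-edge}. Evaluating at $\lambda=1$ gives $\phi(1)=\E(1)-(2+\alpha)=-(1+\alpha)<0$, so by monotonicity the root satisfies $\lambda<1$. Evaluating at $\lambda=1/(2+\alpha)$ gives $\phi(1/(2+\alpha))=\E\bigl(e^{(1+\alpha)\eta/(2+\alpha)}\bigr)-1$; since the standing hypothesis $\pr(\xi=0)<1$ forces $\pr(\eta>0)>0$, the integrand is $\ge 1$ a.s.\ and $>1$ on a positive-probability set, so the expectation exceeds $1$, forcing $\phi(1/(2+\alpha))>0$ and hence the root to lie strictly above $1/(2+\alpha)$. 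Combining, $\lambda\in(1/(2+\alpha),1)$.

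There is no substantive obstacle; the only technical caveat is that Assumption~\ref{ass:xlogx} only guarantees $\E(e^{\theta\eta})<\infty$ for \emph{some} positive $\theta$, so $\phi$ may already be $+\infty$ at $\lambda=1/(2+\alpha)$. This does not affect the argument: if $\E(e^{(1+\alpha)\eta/(2+\alpha)})=+\infty$, then $\phi$ descends from $+\infty$ down to the negative value $-(1+\alpha)$ across $(1/(2+\alpha),1]$, and the unique root is still bracketed strictly between $1/(2+\alpha)$ and $1$.
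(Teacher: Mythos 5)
Your proposal is correct and follows essentially the same route as the paper: the identity~\eqref{eq:lambdaeta} is obtained by the same Fubini/Tonelli interchange applied to~\eqref{eqn:malthus-edge}, and the range $\gl\in(1/(2+\alpha),1)$ is obtained from the same monotone/anti-monotone comparison of the two sides, with the paper merely stating the endpoint comparisons tersely where you spell out $\phi(1)=-(1+\alpha)<0$ and $\phi(1/(2+\alpha))>0$ (using $\pr(\eta>0)>0$) explicitly. Your extra remark about $\phi$ possibly being $+\infty$ at the left endpoint is a reasonable tidying-up that the paper elides; it does not change the argument.
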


\begin{proof}
\eqref{eq:lambdaeta} follows from Lemma \ref{lapmom}. The range for $\gl$ follows from~\eqref{eq:lambdaeta} upon noting that the left-hand side in~\eqref{eq:lambdaeta} is decreasing as a function of $\gl$ and the right-hand side is increasing.
\end{proof}

 We will use $p_{\Ma, \alpha}(\geq k) = \sum_{l=k}^\infty p_{\Ma, \alpha}(l) $ for the tail of the degree distribution. 

\begin{thm}[Degree asymptotics]
    \label{thm:non-conden}
  Assume  $\pr(\xi = 1) = \pr(\eta =\infty) = 0$.
   
    \begin{enumeratea}
        \item Under Assumption~\ref{ass:xlogx}, there exists constant $C_1 > 0$ such that $p_{\Ma, \alpha}(\geq k) \geq C_{1} k^{-1/\gl}$. Since $\gl \in (1/(2+\alpha), 1)$, the degree exponent is {\bf strictly smaller} than the regime without delay (that is, the distribution tails are heavier). 
        \item Further assume there exists $l> 1/\gl$ such that $\E([\hat\mvzeta_{\ve}(\gl(1 \wedge \alpha^{-1}))]^l) < \infty$. Then, there exists constant $C_2 > 0$ such that $p_{\Ma, \alpha}(\geq k) \leq C_{2} k^{-1/\gl}$. 
    \end{enumeratea}
\end{thm}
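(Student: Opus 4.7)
By Proposition~\ref{prop:edge-equiv-bp}, we may work with $|\BP_\ve(T_1)|$ in place of $D_{\Ma}$, where $T_1\sim\exp(1)$ is independent of the edge branching process and $|\BP_\ve(t)|=\sum_{i=0}^{\chi(t)-1}|\BP^\circ_{\ve,i}(t-\theta_i)|$. Since $\BP^\circ_\ve$ is a supercritical CTBP with Malthusian parameter $\lambda$ and Assumption~\ref{ass:xlogx} is precisely the $x\log x$ condition, Nerman's theorem gives a random variable $W\geq 0$ with $e^{-\lambda t}|\BP^\circ_\ve(t)|\to W$ almost surely and in $L^1$, with $\E(W)>0$; in particular $\pr(W>0)>0$.

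For part (a), use $|\BP_\ve(t)|\geq|\BP^\circ_{\ve,0}(t)|$ and Fubini to obtain
\[
\pr(D_{\Ma}\geq k)\geq\int_0^\infty e^{-t}\pr(|\BP^\circ_\ve(t)|\geq k)\,dt.
\]
Pick $\epsilon>0$ with $\pr(W\geq\epsilon)>0$. On $\{W\geq\epsilon\}$, a.s.~convergence provides a random $T_0(\omega)$ beyond which $|\BP^\circ_\ve(t)|\geq We^{\lambda t}/2$. Taking $t_k:=\lambda^{-1}\log(2k/W)$, which diverges on $\{W>0\}$, the right-hand side is bounded below for large $k$ by $\E(\ind_{\{W\geq\epsilon\}}e^{-t_k})=2^{-1/\lambda}\E(\ind_{\{W\geq\epsilon\}}W^{1/\lambda})\,k^{-1/\lambda}$. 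The coefficient is at least $\epsilon^{1/\lambda}\pr(W\geq\epsilon)>0$, yielding the lower bound $C_1 k^{-1/\lambda}$.

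For part (b), the key estimate is a moment bound $\E(|\BP_\ve(t)|^l)\leq Ce^{l\lambda t}$ uniformly in $t$ for some $l>1/\lambda$. Granted this, Markov's inequality gives $\pr(|\BP_\ve(t)|\geq k)\leq(Ce^{l\lambda t}/k^l)\wedge 1$; splitting $\int_0^\infty e^{-t}\pr(|\BP_\ve(t)|\geq k)\,dt$ at $t^\ast:=\lambda^{-1}\log k$, the Markov regime contributes $O(k^{-1/\lambda})$ (using $l\lambda>1$) and the trivial-bound regime contributes $e^{-t^\ast}=k^{-1/\lambda}$, yielding the upper bound $C_2 k^{-1/\lambda}$.

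The principal obstacle is establishing the $L^l$ bound. For a single CTBP, $\E(|\BP^\circ_\ve(t)|^l)\leq Ce^{l\lambda t}$ follows from $L^l$-convergence of the Malthus martingale via classical Biggins/Iksanov-type results under $\E(\hat\mvzeta_\ve(\lambda)^l)<\infty$. For the full $|\BP_\ve(t)|$ including immigration, condition on $\chi$ so the summands become independent and apply Minkowski to obtain $(\E(|\BP_\ve(t)|^l\mid\chi))^{1/l}\leq Ce^{\lambda t}\sum_i e^{-\lambda\theta_i}$, and then control $\E((\sum_i e^{-\lambda\theta_i})^l)$ by Poisson moment computations against the intensity $\alpha h(s)\,ds$. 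When $\alpha\leq 1$, the $\lambda$-Laplace mass of this intensity is $\alpha\leq 1$ and the hypothesis at $\lambda(1\wedge\alpha^{-1})=\lambda$ suffices; when $\alpha>1$, the hypothesis rescales to $\lambda/\alpha$ precisely so that the heavier immigration contribution is absorbed via a Laplace-transform comparison. This moment propagation from $\mvzeta_\ve$ through the branching process with immigration is the technical heart of the argument.
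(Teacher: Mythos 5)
Your proposal is correct and follows essentially the same route as the paper: reduce via Proposition~\ref{prop:edge-equiv-bp} to $|\BP_\ve(T_1)|$, prove a.s.\ convergence of the Malthus-normalized size to obtain the lower bound, and establish a uniform $L^l$ bound on $e^{-\lambda t}|\BP_\ve(t)|$ (your conditional-Minkowski step is the paper's Lemma~\ref{lem:bpe-moment}(b), which uses Jensen instead and cites M\'ori--Rokob rather than Biggins/Iksanov) combined with Markov and an integral split at $\lambda^{-1}\log k$ for the upper bound. One small patch in (a): the quantity $\E(\ind_{\{W\geq\epsilon\}}e^{-t_k})$ is not a valid lower bound as written since the a.s.-convergence threshold $T_0(\omega)$ is unbounded and can exceed $t_k(\omega)$ on a positive-probability set for each fixed $k$; insert the indicator $\ind\{T_0\leq t_k\}$ and then invoke monotone convergence in $k$ to recover a positive constant (the paper sidesteps this by fixing deterministic $\delta,t_0,\eta$ with $\pr(e^{-\lambda t}|\BP_\ve(t)|>\delta)\geq\eta$ for all $t\geq t_0$, which is a cleaner formulation of the same idea).
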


The above result leads to the following special cases. 

\begin{cor}[Special cases]\label{cor:degree-tail-macro-non-cond}
For the following special cases of the delay distribution $\eta$: 
\begin{enumeratea}
    \item 	Suppose $\eta \in (a,b)$ i.e. in a bounded sub-interval of $\bR_+$. Then there exist (distribution dependent) constants $0 < C_1 < C_2 < \infty $ such that 
$ C_{1} k^{-1/\gl}\leq p_{\Ma, \alpha}(\geq k) \leq  C_{2} k^{-1/\gl} $ where $\gl \in (1/(2+\alpha), 1)$ is the unique solution of the equation in~\eqref{eqn:malthus-edge}.
\item Suppose $\eta \sim \exp(\theta)$ with $\theta > 0$. Then~\eqref{eq:lambdaeta} has a unique positive solution given by
\begin{equation}
\label{eqn:gltheta}
    \gl = \gl(\theta) = \frac{1-\theta + \sqrt{(1-\theta)^2 + 4\theta/(2+\alpha)}}{2}.
\end{equation}
and $p_{\Ma, \alpha}(\geq k) \ge C_{1} k^{-1/\gl}$.
Moreover, if either (i) $\alpha =0, \, \theta>0$ or (ii) $\alpha>0, \,\theta \ge 1$,
then
$p_{\Ma, \alpha}(\geq k) \leq  C_{2} k^{-1/\gl}$.
Here, $0 < C_1 < C_2 < \infty$ are constants depending on $\alpha, \theta$.
\end{enumeratea}
\end{cor}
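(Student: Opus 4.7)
The plan is to derive both parts directly from Theorem~\ref{thm:non-conden} by verifying its hypotheses in each special case. The lower bound requires only Assumption~\ref{ass:xlogx}, while the upper bound needs in addition that $\E([\hat\mvzeta_{\ve}(\gl(1 \wedge \alpha^{-1}))]^l) < \infty$ for some $l > 1/\gl$. In both cases, finiteness of moments of the Laplace functional $\hat\mvzeta_{\ve}(\gl')$ for $\gl' > 0$ reduces, via Campbell's formula for Poisson processes combined with the elementary bound $e^u - 1 \le u e^u$ for $u \ge 0$, to the integrability of $e^{-\gl' t} h(t)$ where $h(t) = \frac{1}{2+\alpha}\int_0^t e^s dF_\eta(s)$. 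This single computation will govern the whole proof.

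For part (a), the MGF of the bounded variable $\eta$ is finite for all exponents, so Assumption~\ref{ass:xlogx} sets up a valid Malthusian parameter $\gl$. Since $\eta \le b$ almost surely, $h(t) \le e^b/(2+\alpha)$ is bounded on $[0, \infty)$, and hence $\int_0^\infty e^{-\gl' t} h(t) dt < \infty$ for every $\gl' > 0$. Applying the MGF identity $\E(\exp(s \hat\mvzeta_{\ve}(\gl'))) = \exp(\int_0^\infty (e^{s e^{-\gl' t}} - 1) h(t) dt)$ together with $e^{s e^{-\gl' t}} - 1 \le s e^s e^{-\gl' t}$ shows that the MGF of $\hat\mvzeta_{\ve}(\gl(1 \wedge \alpha^{-1}))$ is finite in a neighborhood of the origin. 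Thus every moment (and in particular the $x\log x$ and any $l$-th moment) is finite, and the matching bounds follow from Theorem~\ref{thm:non-conden}.

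For part (b), I would first compute $\E(e^{(1-\gl)\eta}) = \theta/(\theta - 1 + \gl)$, valid whenever $\gl > 1-\theta$. Substituting into Lemma~\ref{lem:lambdarange}'s identity $\E(e^{(1-\gl)\eta}) = (2+\alpha)\gl$ yields the quadratic $(2+\alpha)\gl^2 + (2+\alpha)(\theta-1)\gl - \theta = 0$ whose positive root is precisely~\eqref{eqn:gltheta}. Evaluating this quadratic at $\gl = 1-\theta$ produces $-\theta < 0$, so the unique positive root automatically satisfies $\gl > (1-\theta)_+$, legitimizing the MGF computation. Under $\eta\sim\exp(\theta)$, one has the explicit form $h(t) = \frac{\theta}{(2+\alpha)(1-\theta)}(e^{(1-\theta)t} - 1)$ when $\theta \ne 1$ and $h(t) = t/(2+\alpha)$ when $\theta = 1$. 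The relation $\gl > 1 - \theta$ gives $\int_0^\infty e^{-\gl t} h(t) dt < \infty$, which yields $\E(\hat\mvzeta_{\ve}(\gl)^2) < \infty$ and hence Assumption~\ref{ass:xlogx}, producing the lower bound for all $\theta > 0$. For the upper bound in case (i) with $\alpha = 0$ one has $1 \wedge \alpha^{-1} = 1$, so the Laplace weight is $\gl$ itself and the bound just described delivers finite MGF and hence every moment of $\hat\mvzeta_{\ve}(\gl)$. In case (ii) with $\theta \ge 1$, $h$ is either bounded or grows linearly, so $\int_0^\infty e^{-\gl' t} h(t) dt < \infty$ holds for any $\gl' > 0$, covering the weight $\gl(1 \wedge \alpha^{-1})$ regardless of $\alpha$.

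The main (and essentially only) obstacle is understanding why cases (i) and (ii) are the correct scope for the upper bound. In the complementary regime $\alpha > 0$ with $\theta < 1$ and $\alpha > 1$, the weight $\gl/\alpha$ can fall below $1-\theta$ (indeed, for large $\alpha$ this is forced since $\gl < 1$), in which case the integral $\int e^{-(\gl/\alpha) t} e^{(1-\theta)t} dt$ diverges, and the hypothesis of Theorem~\ref{thm:non-conden}(b) is no longer automatic. Beyond noting this boundary, the proof is entirely a matter of assembling Lemma~\ref{lem:lambdarange}, the explicit $h(t)$, and the Campbell-type MGF bound for Poisson integrals.
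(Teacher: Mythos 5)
Your proposal is correct and arrives at the same conclusions as the paper, but by a genuinely different route. The paper bounds $\E\bigl([\hat\mvzeta_{\ve}(\gl')]^l\bigr)$ by Jensen's inequality applied to the probability weight $\gl'e^{-\gl't}\,dt$ (after writing $\hat\mvzeta_{\ve}(\gl')=\int\gl'e^{-\gl't}\mvzeta_{\ve}(t)\,dt$), then feeds in Poisson moment bounds $\E[\mvzeta_{\ve}(t)^l]\lesssim 1+t^l+e^{l(1-\theta)t}$; this leads to the requirement $l(1-\theta)<\gl$ for some $l>1/\gl$, i.e., $\gl^2>1-\theta$, which the paper verifies via an explicit expansion of $\gl^2$ using the closed form in~\eqref{eqn:gltheta}. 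You instead invoke Campbell's exponential formula for the Poisson point process, $\E[e^{s\hat\mvzeta_{\ve}(\gl')}]=\exp\bigl(\int_0^\infty(e^{se^{-\gl't}}-1)h(t)\,dt\bigr)$, and bound the integrand by $se^se^{-\gl't}h(t)$; this reduces everything to the single integrability condition $\int_0^\infty e^{-\gl't}h(t)\,dt<\infty$, which yields finiteness of the \emph{entire} MGF and hence every moment. Your approach buys two things: a strictly stronger conclusion (all moments of $\hat\mvzeta_{\ve}(\gl)$ are finite, not just moments up to $\gl/(1-\theta)$), and a lighter verification step, since the needed inequality $\gl>1-\theta$ follows instantly from the sign of the defining quadratic at $\gl=1-\theta$, bypassing the paper's algebraic check that $\gl^2>1-\theta$. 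Your remark locating the obstruction in the regime $\alpha>1,\theta<1$ where the weight $\gl/\alpha$ can drop below $1-\theta$ correctly explains why the hypothesis of Theorem~\ref{thm:non-conden}(b) is restricted as stated. One minor note: the paper's intermediate Poisson bound omits a polynomial $t^l$ factor for $\theta>1$ (where $H(t)$ grows linearly, not stays bounded); this is inconsequential there since polynomials are integrable against $e^{-\gl't}$, and your formulation avoids the issue entirely.
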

\begin{rem}
\begin{enumeratea}
    \item In the setting (b) above, $\eta \sim \exp(\theta)$ corresponds to original delay $\xi$ having distribution $F_\xi(x) = 1- (1-x)^{\theta}$ for $x\in [0,1]$. When $\theta \to \infty$, the delay distribution converges weakly to the Dirac mass at zero. This is exhibited in the observation $\gl(\theta) \to 1/(2 + \alpha)$ as $\theta \to \infty$, that is, the degree exponent approaches $2 + \alpha$, namely, the exponent for the affine preferential attachment model without delay. 
    \item For $\alpha>0,\theta \in (0,1)$, to obtain a matching upper bound for the degree distribution, one needs to somehow verify the moment condition of Theorem~\ref{thm:non-conden}(b). We leave this as an open problem. 
\end{enumeratea}
\end{rem}

\begin{rem}[Condensation]\label{cond}
    In view of the above results, it is clear that there is a qualitative difference in the network asymptotics between the cases $\pr(\xi = 1) >0$ and $\pr(\xi = 1) = 0$. If $\pr(\xi = 1) = \pr(\eta =\infty) >0$ as in Corollary~\ref{cor:special-case-macro}, it is easy to see that the root degree satisfies $M(\rho, n) = \Theta_P(n)$. This phenomenon, where the root attracts a non-vanishing fraction of the incoming vertices as the network grows, also implies lack of uniform integrability for the empirical degree distribution as the network grows (``mass escapes to $\infty$''). This lack of uniform integrability, which is also called {\bf condensation}, is seen to be equivalent to $\E(D_{\Ma}) =\sum_{k=1}^\infty k p_{\Ma, \alpha}(k) <2$ (expectation under the limiting degree distribution is strictly less than the limit of the means under the empirical degree distribution). By Theorem~\ref{thm:macroscopic-local}(a), condensation occurs if and only if $\pr(\xi = 1) = \pr(\eta =\infty) >0$.

    In the context of the networks community, this is one example of condensation phenomena where one vertex reaches degree $\Theta_{\pr}(n)$. However, there are more fascinating mechanisms for mass to escape away from the bulk but with the maximal degree satisfying $\Theta_{\pr}(n)$. For various mechanisms for condensation phenomenon to occur see e.g.~\cite{bianconi2001bose,janson2012simply,dereich2017nonextensive,haslegrave2020condensation,betz2018shape,banerjee2022co,bhamidi2012twitter} and the references therein. 
\end{rem}}

The final result describes the scaling of the root degree (in the non-condensation regime), showing that the scaling exponent above for the degree also governs the evolution of more macroscopic functionals. The same scaling should carry over to the maximal degree in this setting, but we will leave this to future work. For notational simplicity, we only consider the case $\alpha=0$. 

\begin{thm}[Root degree asymptotics]\label{thm:macro-max-deg}
	Set $\alpha = 0$. Consider the root degree $M(\rho, n) $. Assume $\eta$ has a density $f_\eta$ such that $\sup_{u \ge 0} e^u f_\eta(u) < \infty$. The following hold under the Assumptions of Theorem~\ref{thm:non-conden} with $\lambda $ as in~\eqref{eqn:malthus-edge}: 
 \begin{enumeratea}
     \item{\bf Upper bound:} Assume $\E(e^{\gamma \eta})< \infty$ for all $\gamma>0$. Then there exists $C_1 \in (0,\infty)$ such that for any $A > 0$,
     \[\limsup_{n\to\infty} \pr(M(\rho, n) > An^{\lambda}) \leq \frac{C_1}{A^{\frac{1}{\lambda +1}}}.\]
     \item {\bf Lower bound:} Assume $\exists\ \gamma >1$ such that $\E(e^{\gamma \eta}) < \infty$. Then there exist finite positive constants  $C_1,C_2$ such that for any $x >0$,
     \[\limsup_{n \to \infty} \prob\left(M(\rho, n) < x n^{\lambda}\right) \le C_1 x^{C_2}. \]
 \end{enumeratea}
\end{thm}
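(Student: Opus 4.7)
\emph{Setup.} By Proposition~\ref{prop:edge-equiv-bp}, the root degree in the embedded CTBP equals $|\BP_\ve(T)|$ at an appropriate stopping time $T$; when $\alpha=0$ the immigration rate $r_\chi \equiv 0$, so $|\BP_\ve(t)|$ reduces to a single supercritical continuous-time branching process driven by the point process $\mvzeta_\ve$, with Malthusian parameter $\lambda \in (1/2,1)$ and martingale limit $V := \lim_{t\to\infty}e^{-\lambda t}|\BP_\ve(t)|$. Since the offspring measure has infinite mean integrated over $[0,\infty)$, the BP survives a.s., so $V > 0$ a.s., and Assumption~\ref{ass:xlogx} (implied by the moment hypothesis on $\eta$) yields $\E V < \infty$ via standard Nerman--Jagers theory.

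\emph{Embedding.} First I would realize $\{\cT(n)\}$ inside $\BP_{\Ma,\ve}$ and let $T_n$ be the $n$-th birth time, so that $M(\rho,n) \stackrel{d}{=} |\BP_\ve(T_n)|$. Since each vertex in $\BP_\Ma$ reproduces at total rate $1$ (cf.~Section~\ref{intuition}), the total size $|\BP_\Ma(t)|$ is a rate-one Yule process, and so $|\BP_\Ma(t)|/e^t \to W \sim \Exp(1)$ a.s., giving $T_n = \log n - \log W + o(1)$. Combined with $|\BP_\ve(t)| \sim Ve^{\lambda t}$ this produces the heuristic $M(\rho, n) \approx V(n/W)^\lambda$, so upper and lower bounds on $M(\rho,n)/n^\lambda$ reduce to tail and anti-concentration estimates for $V$ and $W$.

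\emph{Upper bound.} Using monotonicity of $|\BP_\ve(\cdot)|$ in $t$, for any $s \ge 0$,
\[
\pr(M(\rho,n) > An^\lambda) \le \pr(T_n > \log n + s) + \pr\bigl(|\BP_\ve(\log n + s)| > An^\lambda\bigr) \lesssim e^{-s} + e^{\lambda s}/A,
\]
where the first bound uses the exponential tail of $W$ and the second uses Markov on $V$ together with $\E V < \infty$. Optimizing via $e^s \asymp A^{1/(\lambda+1)}$ yields the claimed bound $C/A^{1/(\lambda+1)}$. The non-trivial exponent $1/(\lambda+1)$ arises from this trade-off between the fluctuation of $T_n$ and the tail of $V$; the strong hypothesis $\E(e^{\gamma\eta}) < \infty$ for all $\gamma > 0$ is used to quantify the $o(1)$ error in $T_n$ uniformly in $n$ and to propagate control through the embedding, extending the renewal-coupling framework of~\cite{dey2022asymptotic}.

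\emph{Lower bound and main obstacle.} The symmetric splitting gives
\[
\pr(M(\rho,n) < xn^\lambda) \le \pr(T_n < \log n - s) + \pr\bigl(|\BP_\ve(\log n - s)| < xn^\lambda\bigr) \lesssim e^{-e^s} + \pr(V < xe^{\lambda s}),
\]
where the first term is doubly-exponentially small. The main technical step is the anti-concentration estimate $\pr(V < y) \le C y^{C_2}$ for some $C_2 > 0$ as $y \downarrow 0$, which I would derive from the smoothing recursion $V \stackrel{d}{=} \sum_i e^{-\lambda \tau_i}V_i'$ (with $\{\tau_i\}$ the points of $\mvzeta_\ve$ and $\{V_i'\}$ i.i.d.\ copies of $V$): a Laplace-transform / Tauberian argument shows $\E(e^{-uV})$ decays polynomially in $u$, with the extra moment $\E(e^{\gamma\eta}) < \infty$ for some $\gamma > 1$ providing the required regularity of $h(t) = \frac{1}{2}\int_0^t e^s dF_\eta(s)$ near $t=0$. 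Substituting $y = xe^{\lambda s}$ with $s \asymp \log(1/x)$ then yields $C_1 x^{C_2}$. The overall main obstacle of the proof is the embedding step itself: making rigorous the mismatch between the discrete vertex-count delay $k(1-\xi_{k+1})$ and the continuous-time delay $T_{k+1}-\eta_{k+1}$, with errors small enough not to degrade either scaling exponent.
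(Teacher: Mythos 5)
Your proposal has the right heuristic picture but treats the central technical obstacle --- which you yourself flag at the end as ``the main obstacle'' --- as though it were already done. The identity $M(\rho,n) \stackrel{d}{=} |\BP_\ve(T_n)|$ is false: the discrete network $\{\cT(n)\}$ is \emph{not} a realization of the edge branching process $\BP_{\Ma,\ve}$, and neither is the population size of $\BP_{\Ma}$ a rate-one Yule process (its individuals reproduce according to the age-dependent point process $\cP_{\Ma}$, not at constant rate). The correct continuous-time avatar of $\cT(n)$ is the process $\TT_c$ in~\eqref{couplingdyn}, built from Construction~\ref{constr-copy-delay} via~\eqref{disctocon}, whose edge reproduction rates $\hat r_i(t)$ depend on the \emph{entire} current state through $Y(t)$, are correlated across edges, differ from the $\BP_{\Ma,\ve}$ rates $r(t-\tau_i)$, and carry a separate $i=1$ term accounting for delays exceeding the current age of the network. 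The root is an old vertex, so the approximation error accumulates over a time window of length $\Theta(\log n)$ and cannot be absorbed into an $o(1)$. The paper's work is almost entirely in controlling this: Lemma~\ref{ratecomp} gives quantitative rate-comparison bounds; the upper bound is then obtained not by splitting on $T_n$ and applying Markov to the BP martingale limit, but by deriving a renewal inequality for $m(t) = \E[\rho(t)]$ in the \emph{true} dynamics $\TT_c$ (with the error terms from Lemma~\ref{ratecomp} feeding into the forcing function) and invoking the key renewal theorem; the lower bound requires constructing an explicit coupling $\TT_{M,c}$ that freezes the early part of $\TT_c$ and runs the edge BP thereafter, and then bounding the number of miscoupled vertices $\Mis(t)$ (Lemma~\ref{misclem}). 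Your Yule time change, the exponent optimization yielding $1/(\lambda+1)$, and the anti-concentration estimate $\pr(W_\ve \le x) \le C x^{C_2}$ (which the paper imports from~\cite{banerjee2023degree}) are all ingredients the paper also uses, but they sit on top of this coupling and rate-comparison machinery rather than replacing it; as written your sketch has a genuine gap at exactly the step the theorem is about.
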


\begin{rem}
    Section~\ref{sec:macro-max-deg-proof} describes an edge copying procedure in a similar vein as (but more involved than) Section~\ref{edgedesc}, which encodes the \textbf{global evolution} of the network process $\{\cT(n): n \ge 1\}$. The proof of Theorem~\ref{thm:macro-max-deg} relies on a detailed quantification of the error incurred in approximating this edge copying procedure by the edge branching process in Section~\ref{edgedesc}. This is achieved using renewal theoretic and coupling techniques. Besides its utility in root degree evolution, we believe that this approach is of independent interest and can be used to furnish asymptotics of a host of other global network functionals, e.g., root PageRank~\cite{banerjee2022pagerank}.
\end{rem}

\subsection{Related work}
We placed this work in the general area of dynamic network models in Section~\ref{sec:intro}. The goal of this section is to discuss research that directly influences this paper. Aldous's paper~\cite{aldous-fringe} heralded the now fundamental notion of local weak convergence to understand asymptotics for large discrete random structures; for more recent work on fringe distributions, see the survey~\cite{holmgren2017fringe}. Our proof techniques use stochastic approximation techniques to understand the historical evolution of subtrees, which was first used in the context of preferential attachment models (without delay) in~\cite{rudas2007random}. While there is limited work to date on models related to delays in the macroscopic regime, understanding probabilistic evolution schemes giving rise to condensation phenomenon in networks has spurred a lot of literature over the last few years starting with~\cite{bianconi2001bose}, see for example~\cite{dereich2017,borgs2007first,bhamidi2012twitter,banerjee2022co}. In the probability community, the study of dynamic network models in the family studied in this paper,  incorporating delay in their evolution,  is still largely in its infancy; the closest papers are~\cite{baccelli2019renewal,king2021fluid,dey2022asymptotic}.

\section{Proofs}
\label{sec:proofs-macro}
{ We start by proving local weak convergence in the macroscopic regime (Theorem~\ref{thm:macroscopic-local}) in Sections~\ref{sec:proof-macro-local} and~\ref{sec:cont-proof-lwc}. Section~\ref{sec:conden-proofs} contains proofs of Theorem~\ref{thm:macroscopic-local}(b) and (c). The results on characterization of the degree exponent, namely Theorem~\ref{thm:non-conden} and Corollary~\ref{cor:degree-tail-macro-non-cond}, are proved in Section \ref{tailexpsec}. The edge branching process described in Section~\ref{edgedesc} will play a central role in the proofs of all the results in Section~\ref{sec:conden-proofs} and Section \ref{tailexpsec}. Section~\ref{sec:macro-max-deg-proof} and Section \ref{lbsec} contain the proof of the root degree asymptotics (Theorem~\ref{thm:macro-max-deg}).}

\subsection{{ Proof of local weak convergence}: Theorem~\ref{thm:macroscopic-local}(a)}
\label{sec:proof-macro-local}
{ For the rest of this section, we emphasize that we work with the discrete time tree process. } For a vertex $n\geq 1$ and $k\geq 0$, define { the random variable} $T_k^n = \inf\set{\ell: \deg(n,\ell) = k+1}$ to be the first time vertex $n$ has its $k$-th child (and $T_0^n = n$) in the discrete time network process $\{\TT(j) : j \ge 1\}$. For $k\geq 1$, we label the $k$th vertex entering the \emph{fringe} of vertex $n$ (the progeny tree associated with vertex $n$) by $\rho_k^n$ i.e \begin{equation*}
    \rho_k^n = \inf\set{m> \rho_{k-1}^n: \text{ vertex }m \text{ attaches to one of } \rho_0^n,\rho_1^n,\dots, \rho_{k-1}^n}\end{equation*}and $\rho_0^n = n$. For $j\geq 0,k\geq 1$, let $T^n_{j,k} = T^{\rho_j^n}_k$ be the birth time of $k$th child of $j$th vertex in the fringe of vertex $n$ (note that $T^n_{0,k} = T^n_k$ and $T^n_{j,0} = \rho^n_j$), and  \begin{align*}
    \tau_{j,k}^n = \log\left(\frac{T^n_{j,k}}{T^n_{j,k-1}}\right)
\end{align*}be the inter-arrival time in $\log$-scale.
One can think of the above quantity as the `continuous time' (if we encode vertex arrivals by epochs of a Yule process) elapsed for the degree of the $j$th descendant of vertex $n$ to increase from $k-1$ to $k$. These form a point process of reproduction times of vertices in the fringe of vertex $n$. { At this stage, we emphasise that the values of $T_{j,k}$'s are whole numbers and this restricts admissible values of $\tau_{j,k}$'s.} The following { lemma} shows that the inter-arrival times for these point processes weakly converge to those of the point process described in Section~\ref{sec:lim-obj-macro} as $n \to \infty$. Further, the point processes associated with distinct vertices in the fringe become asymptotically independent.

\begin{prop}\label{prop:weak-conv-macro} 
Let $\left(\left(\sE_{j,k}\right)_{k\geq 1}: j\geq 0\right)$ be \emph{iid} copies of $\left(\sE_{k-1\leadsto k}\right)_{k\geq 1}$ defined in Section~\ref{sec:lim-obj-macro}. Then, as $n \to \infty$,\begin{align*}
 \left(\left(\tau_{j,k}^n\right)_{k\geq 1}: j\geq 0\right) \convd \left(\left(\sE_{j,k}\right)_{k\geq 1}: j\geq 0\right).
\end{align*}
\end{prop}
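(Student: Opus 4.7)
The plan is to use a logarithmic time change. Fix a tagged vertex $v_n$ (take $j=0$ first); set $s = \log(m/n)$ and $Z^n(s) := \deg(v_n,\lfloor ne^s\rfloor)-1$, the counting process of children of $v_n$, whose jump times are $\sigma^n_i = \sum_{k\le i}\tau^n_{0,k}$. Conditional on $\cT(m)$, the probability that $v_{m+1}$ attaches to $v_n$ is
\[
p_{n,m} \;=\; \int_{\{\xi:\, n\le\lfloor m(1-\xi)\rfloor\}}\frac{\deg(v_n,\lfloor m(1-\xi)\rfloor)+\alpha}{2\lfloor m(1-\xi)\rfloor - 2 + \alpha\lfloor m(1-\xi)\rfloor}\,d\mu(\xi).
\]
Using the denominator asymptotic $\sim (2+\alpha)m(1-\xi)$ and substituting $u=-\log(1-\xi)$, which pushes $\mu$ forward to $F_\eta$ by~\eqref{eqn:eta-dist-def}, one obtains
\[
m\cdot p_{n,m} \;=\; \int_0^{\log(m/n)}\frac{Z^n(\log(m/n)-u)+1+\alpha}{2+\alpha}\,e^u\,dF_\eta(u) + o(1).
\]
Partitioning the integral by the jump times of $Z^n$ reproduces exactly the hazard rate $h_{i\leadsto i+1}(s-\sigma^n_i)$ from~\eqref{eqn:h-gen-hazard} with $i=Z^n(s)$, matching the construction of $\cP_{\Ma,\alpha}$.

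The second step promotes this rate identification to convergence of the jump times for $j=0$. For the first jump, $\pr(\tau^n_{0,1} > s) = \E\bigl[\prod_{m=n+1}^{\lfloor ne^s\rfloor}(1-p_{n,m-1})\bigr]$; applying the approximation above yields $\exp\bigl(-\int_0^s h_{0\leadsto 1}(u)\,du\bigr)(1+o(1))$, which matches the survival function of $\sE_{0\leadsto 1}$. Iterating inductively over $k$ (conditioning on $\tau^n_{0,1},\ldots,\tau^n_{0,k}$ and using the formula for $h_{k\leadsto k+1}$) gives $(\tau^n_{0,k})_{k\ge1}\convd(\sE_{k-1\leadsto k})_{k\ge 1}$. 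The residual error contributions (from the floor, the denominator asymptotic, the tail $u\approx s$, and propagation through the random past jump times) are handled via standard Riemann-sum-to-integral estimates and continuity.

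For joint convergence across $j=0,1,\ldots,J$, I proceed inductively on $j$. The birth time $\rho^n_j$ of the $j$-th vertex in the fringe satisfies $\log(\rho^n_j/n)\convd\tilde\sigma_j$, where $\tilde\sigma_j$ is determined by the previously converged fringe structure. On the own logarithmic clock $u = \log(m/\rho^n_j)$ of $\rho^n_j$, the single-vertex argument of step~2 applies verbatim, since the limiting hazard depends only on $\rho^n_j$'s own past jumps and on the ``global'' denominator $(2+\alpha)m(1-\xi)$, which is insensitive to the local fringe structure around $v_n$. Asymptotic independence of the reproduction processes of distinct $\rho^n_j$'s is established by a second-moment estimate: the probability that a fixed $v_{m+1}$ attaches to two distinct tagged vertices is $O(1/m^2)$, whose sum over $m\ge n$ is $O(1/n)$, allowing these processes to be coupled to independent copies with vanishing total variation error.

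The main technical obstacle is step~2: the hazard $h_{i\leadsto i+1}$ depends on the \emph{entire} history of past jump times $(\sigma_1,\ldots,\sigma_i)$, so the inductive step must use the joint convergence $(\sigma^n_1,\ldots,\sigma^n_i)\Rightarrow(\sigma_1,\ldots,\sigma_i)$ rather than just marginal convergence. Technically this requires tightness combined with identification of the conditional hazard under the joint weak limit; a continuity argument for $h_{i\leadsto i+1}$ as a functional of $(\sigma_1,\ldots,\sigma_i)$ (which holds under mild regularity of $F_\eta$) closes the loop. A cleaner alternative is to exploit the edge branching process $\BP_{\Ma,\ve}$ from Section~\ref{edgedesc}, whose Markovian structure bypasses this history dependence entirely.
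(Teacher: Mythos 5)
Your starting point---identifying the limiting hazard rate $h_{i\leadsto i+1}$ via the logarithmic time change and the Riemann-sum-to-integral reduction of the attachment probability---is exactly the same intuition the paper uses. But from there the routes diverge. The paper does not iterate on conditional survival functions; it works with the full joint law at once. It conditions on the \emph{entire} array of discrete inter-arrival times to write an exact expression $\vP_{0,n}(\mvx)$ for the path probability, replaces it by $\vP_{3,n}(\mvx)$ through three approximations each controlled by an explicit $O(1/n)$ error bound uniform on compacts (Lemma~\ref{lem:bigggg-lemma}), and then makes a change of variables to logarithmic inter-arrival times $G_n(\mvy)$ so that $\vP_{3,n}(G_n(\mvy))\fJ_n(\mvy)$ converges pointwise to the continuous joint density $\fF_K$; DCT (with Lemma~\ref{lem: invalid-set} to dispose of the null set of ``invalid'' paths) then gives convergence of the integral against any compactly supported continuous test function. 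This density-plus-change-of-variables route automatically yields joint convergence across fringe positions $j$ because the limiting product $\prod_j[\exp(-H_K(\mvy_j))\prod_k h_k(\mvy_j)]$ factorizes in $j$; no separate independence argument is required.

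Your inductive conditional-distribution approach is defensible in spirit but has real gaps you acknowledge only in passing. First, the step from marginal convergence of $\tau^n_{0,1}$ to joint convergence of $(\tau^n_{0,1},\dots,\tau^n_{0,K})$ requires controlling the conditional hazard $h_{k\leadsto k+1}$ as a functional of the realized (and random) past jump times; you flag this but do not close it, and this is precisely what demands the uniform quantitative bounds that the paper provides through $\vP_{0,n}\to\vP_3$. Second, your estimate for asymptotic independence across $j$'s is imprecise: a new vertex attaches to exactly one parent, so the probability of attaching to two tagged vertices simultaneously is exactly $0$, not $O(1/m^2)$---the correlation you actually need to bound is the dependence of the reproduction point processes of distinct edges through the shared normalizing count, and that is a different (and more delicate) quantity than what you wrote down. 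Finally, your remark that one could prove the weak convergence directly via the edge branching process $\BP_{\Ma,\ve}$ inverts the logical order of the paper: Prop.~\ref{prop:edge-equiv-bp} shows a distributional equivalence between two limit descriptions, but the paper never uses $\BP_{\Ma,\ve}$ to \emph{establish} the discrete-to-continuous limit---doing so would require a fresh argument about the discrete model, not merely an appeal to the Markovian limit.
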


We now present an immediate corollary of Proposition~\ref{prop:weak-conv-macro}, which is the key component in the proof of Theorem~\ref{thm:macroscopic-local}(a). Let $U$ denote a $U[0,1]$ random variable independent of the network process $\set{\cT(n)}_{n\geq 1}$. 
\begin{cor}\label{cor:weak-conv-macro-uniform} Let $\set{\left(\sE_{j,k}\right)_{k\geq 1}:j\geq 0}$ be \emph{iid} copies of $\left(\sE_{k-1\leadsto k}\right)_{k\geq 1}$ and $T_1$ be an exponential random variable of rate $1$ independent of $\set{\left(\sE_{j,k}\right)_{k\geq 1}:j\geq 0}$. Then 
    \begin{equation*}
        \left(\log n - \log \lceil n U\rceil,\left(\tau_{j,k}^{\lceil nU \rceil}\right)_{k\geq 1}: j\geq 0 \right) \convd \left(T_1,\left(\sE_{j,k}\right)_{k\geq 1}:j\geq 0 \right).
    \end{equation*}
\end{cor}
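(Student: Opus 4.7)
The plan is to deduce the corollary from Proposition~\ref{prop:weak-conv-macro} by conditioning on $U$ and invoking independence of $U$ from the network process. The key observations are (i) conditionally on $U = u \in (0,1)$, the vertex index $\lceil nU\rceil$ becomes the deterministic quantity $\lceil nu\rceil$, which tends to $\infty$ as $n\to\infty$; (ii) $\log n - \log\lceil nu\rceil \to -\log u$ deterministically; and (iii) $-\log U \sim \exp(1)$, so the first coordinate in the limit is exactly $T_1$ as claimed. Moreover, since $U$ is independent of $\{\cT(n):n\geq 1\}$, it is also independent of the limit $(\sE_{j,k})$ arising from Proposition~\ref{prop:weak-conv-macro}, which furnishes the required joint independence of $T_1$ and $(\sE_{j,k})$ in the limit.

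Concretely, to establish convergence in distribution on the product space, it suffices to show, for any $J,K\geq 1$ and any bounded continuous $f:\bR^{1+JK}\to\bR$, that
\[
\E\left[f\bigl(\log n - \log\lceil nU\rceil,\ (\tau_{j,k}^{\lceil nU\rceil})_{j\leq J,k\leq K}\bigr)\right] \ \longrightarrow\ \E\left[f\bigl(T_1,\ (\sE_{j,k})_{j\leq J,k\leq K}\bigr)\right].
\]
Using the independence of $U$ from $\set{\cT(n)}_{n\geq 1}$, the left-hand side equals
\[
\int_0^1 \E\left[f\bigl(\log n - \log\lceil nu\rceil,\ (\tau_{j,k}^{\lceil nu\rceil})_{j\leq J,k\leq K}\bigr)\right]du.
\]
For each fixed $u\in (0,1)$, Proposition~\ref{prop:weak-conv-macro} (applied along the sequence $m=\lceil nu\rceil \to\infty$) yields $(\tau_{j,k}^{\lceil nu\rceil})_{j,k} \convd (\sE_{j,k})_{j,k}$; combining this with the deterministic convergence $\log n - \log\lceil nu\rceil \to -\log u$ and the continuous mapping / portmanteau theorem shows the integrand converges to $\E[f(-\log u, (\sE_{j,k}))]$.

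Finally, since $f$ is bounded, bounded convergence permits passing to the limit under the integral, giving
\[
\int_0^1 \E\left[f\bigl(-\log u,\ (\sE_{j,k})\bigr)\right]du \ =\ \E\left[f\bigl(-\log U,\ (\sE_{j,k})\bigr)\right],
\]
where in the last expression $U$ and $(\sE_{j,k})$ are independent (by construction, the $(\sE_{j,k})$ were defined independently of $U$). Since $-\log U \stackrel{d}{=} T_1 \sim \exp(1)$, this matches the right-hand side and completes the argument. The only mild subtlety is that $u=0$ must be excluded, but this occurs with $U$-probability zero and thus does not affect the integral; otherwise, the proof is a clean combination of conditioning, independence, and the already-established Proposition~\ref{prop:weak-conv-macro}. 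The essential input is Proposition~\ref{prop:weak-conv-macro}, and no further probabilistic obstacle arises in this corollary.
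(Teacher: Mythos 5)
Your proof is correct and supplies exactly the argument the paper leaves implicit — the paper states this as an "immediate corollary" of Proposition~\ref{prop:weak-conv-macro} without a proof, and your conditioning-on-$U$, subsequence-of-integers, bounded-convergence argument is the natural way to make that precise. The one phrase that could be tightened is "$U$ is independent of the limit $(\sE_{j,k})$ arising from Proposition~\ref{prop:weak-conv-macro}": since the convergence is only in distribution, there is no coupled limit to speak of; the correct statement, which your displayed computation actually delivers, is that for each fixed $u$ the limit law of $(\tau_{j,k}^{\lceil nu\rceil})$ does not depend on $u$, so integrating against the law of $U$ produces a product measure, equivalently $T_1=-\log U$ independent of $(\sE_{j,k})$.
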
 
In the remainder of the section, we prove Theorem~\ref{thm:macroscopic-local}(a). The proof of Proposition~\ref{prop:weak-conv-macro} will be provided in the next section.

\begin{proof}[Proof of Theorem~\ref{thm:macroscopic-local}(a)]
{For any $t \in \mathbb{R}_+$ and $\mathbf{w} \in \mathbb{R}_+^{\mathbb{N}_0 \times \mathbb{N}}$, one can construct a unique rooted tree $\mathscr{T}(t,\mathbf{w}) \in \mathbb{T}$ via the following temporal evolution: (a) the root $\emptyset$ is born at time $0$, (b) for any $j \ge 0, k \ge 1$, $w_{j,k}$ denotes the age of the $j$th born vertex in the tree when its $k$th child is born (with root being the $0$th born vertex), (c) retain only those vertices with birth times $\le t$. For $0 \le j < |\mathscr{T}(t,\mathbf{w})|$, denote by $\tau_j(t,\mathbf{w})$ the birth time of the $j$th born vertex in $\mathscr{T}(t,\mathbf{w})$, and let $n(t,\mathbf{w}) := \sup\{j \ge 0 : \tau_j(t,\mathbf{w}) \le t\}$. Define $\mathscr{W} := \{(t,\mathbf{w}) \in  \mathbb{R}_+ \times \mathbb{R}_+^{\mathbb{N}_0 \times \mathbb{N}} : n(t,\mathbf{w}) < \infty, \, \tau_j(t,\mathbf{w}) \neq t \text{ for any } 0 \le j \le n(t,\mathbf{w})\}$. Then, it is clear that $\mathscr{T}: \mathscr{W} \rightarrow \mathbb{T}$ is continuous.}

{Now, consider the evolution of the network process $\set{\cT(n)}_{n\geq 1}$ in the logarithmic time scale, namely, assign the birth time of $0$ to $v_0,v_1$ and $\log j$ to $v_j$ for $j \ge 2$. For any $n \ge 2$ and $2 \le \ell \le n$, the \emph{fringe} of vertex $\ell$ in $\cT(n)$ can be expressed as $\mathscr{T}(\log n - \log \ell, \mathbf{w}^{(\ell)})$, where $w^{(\ell)}_{j,k} =\tau_{j,k}^\ell = \log\left(\frac{T^\ell_{j,k}}{T^\ell_{j,k-1}}\right)$, $j \ge 0, k \ge 1$. In particular, the fringe of a uniformly chosen vertex is given by
$\mathscr{T}(\log n - \log \lceil n U\rceil, \mathbf{w}^{(\lceil n U\rceil)})$. By Corollary~\ref{cor:weak-conv-macro-uniform}, $(\log n - \log \lceil n U\rceil, \mathbf{w}^{(\lceil n U\rceil)}) \convd (T_1, \mathbf{w}^*)$ where $w^*_{j,k} = \sE_{j,k}$, $j \ge 0, k \ge 1$. Moreover, using Lemma~\ref{lem:hazard-rate-sum} (also, by Proposition~\ref{prop:edge-equiv-bp}), $\prob\left((T_1, \mathbf{w}^*) \in \mathscr{W}^c\right)=0$.
Hence, by the continuous mapping theorem, $\mathscr{T}(\log n - \log \lceil n U\rceil, \mathbf{w}^{(\lceil n U\rceil)}) \convd \mathscr{T}(T_1, \mathbf{w}^*)$, which has the law $\fpm_{\Ma,\alpha}$. This implies the local weak convergence in the expected fringe sense.}

The convergence of the (expected) empirical degree distribution is an immediate consequence of this local weak convergence.
\end{proof}

\subsection{Proof of Proposition~\ref{prop:weak-conv-macro}}
\label{sec:cont-proof-lwc}
This section is dedicated to the proof of Proposition~\ref{prop:weak-conv-macro}. { Throughout the section, we fix $K \in \mathbb{N}$. To prove the proposition it is enough to prove \begin{align*}
        \tau^n_{1:K} := \left(\tau^n_{j,k}:0\leq j\leq K, 1 \leq k\leq K\right) \convd \sE_{1:K} := \left(\sE_{j,k}:0\leq j\leq K, 1 \leq k\leq K\right). 
    \end{align*}To this extent, we show 
    \begin{align}\label{eqn:exp-convergence}
        \lim\limits_{n\to \infty}\E\left(f(\tau^n_{1:K})\right) = \E\left(f(\sE_{1:K})\right).
    \end{align} where $f:\bR_+^{K^2+K}\to \bR$ is a continuous function with compact support $\cS \subseteq [0,L]^{K^2+K}$ for some $L \geq 0$.} 
    
    { We divide the proof into four steps \textbf{(i)} we identify the joint density of the random variables $\sE_{1:K}$, \textbf{(ii)} we obtain a tractable expression for the expectation in the LHS of \eqref{eqn:exp-convergence}, \textbf{(iii)} we make several approximations to the obtained expression and \textbf{(iv)} using the approximations in step (iii) we complete the proof. }

\noindent { \textbf{Step$-1:$} We start by computing the joint density of $\sE_{K}= (\sE_{0\leadsto 1},\sE_{1 \leadsto 2},\dots,\sE_{K-1\leadsto K})$.} Recall that $h(x) = \int_0^x\frac{e^u}{2+\alpha}dF_\eta(u)$ and $H(x) = \int_0^x h(y)dy = \int_0^x \frac{e^u}{2+\alpha} (x-u) dF_\eta(u)$. For any $\mvx\in \bR^K_+$ and $1 \le k \le K$, {let \begin{align}\label{hkdef}
    h_{k}(\mvx) &= \sum_{j=0}^{k-1} h(\sigma_{k}(\mvx) -\sigma_j(\mvx)) + \alpha h(\sigma_{k}(\mvx)),\notag\\
    H_{k}(\mvx) &= \sum_{j=0}^{k-1} H(\sigma_{k}(\mvx) -\sigma_j(\mvx)) + \alpha H(\sigma_{k}(\mvx)),
\end{align} where $\sigma_i(\mvx) = \sum\limits_{l=1}^i x_l$ and $\sigma_0(\mvx) = 0$.} 
Using these functions and the hazard rate description for the inter-arrival times in Section~\ref{sec:lim-obj-macro}, we conclude that the joint density of $\sE_{K}= (\sE_{0\leadsto 1},\sE_{1 \leadsto 2},\dots,\sE_{K-1\leadsto K})$ is given by
\begin{align}\label{eqn:joint-density}
    \fF_K(\mvx) =  \left(\prod\limits_{i=1}^K h_i(\mvx)\right)\exp\left(-H_K(\mvx)\right), \ \mvx \in \bb{R}_+^K.
\end{align} 
Note that $h(x)$ and $H(x)$ are { are non-decreasing functions and therefore discontinuous at most at countably many points.} Hence, the density $\fF_K$ is continuous almost surely on $\bR_+^K$ w.r.t.~Lebesgue measure. 

\noindent { \textbf{Step-$2$:}
We next express $\E\left(f(\tau^n_{1:K})\right)$ as an integral over $\bR_+^{K^2+K}$. We begin by developing some notation to express the distribution of $\tau^n_{1:K}$.}

    Recall $(\rho_j^n:j\geq 0)$ and $(T_{j,k}^n: j,k\geq 0)$ defined at the beginning of Section~\ref{sec:proof-macro-local}. Observe that the values of $(\rho_j^n:0\leq j\leq K)$ are determined by the values of $(T_{j,k}-T_{j,k-1}: 0\leq j\leq K, 1 \leq k \leq K)$. Thus, for $\mvx \in \bR^{K^2+K}_+$, one can define $(\rho_j^n(\mvx):0\leq j\leq K)$, the values of $(\rho^n_j:0\leq j\leq K)$ on the event $E_{\mvx} = \set{T^n_{j,k}-T^n_{j,k-1} = \lfloor x_{j,k}\rfloor :0\leq j\leq K, 1\leq k\leq K}$, as follows:
    \begin{itemize}
    \item {$\rho_0^n(\mvx) = T^n_{0,0}(\mvx) = n$} and $T^n_{0,k}(\mvx) = \rho_0^n(\mvx) + \sum\limits_{l=1}^k \lfloor x_{0,l}\rfloor $ for $1 \le k\leq K$.
    \item Given $\set{T_{j,k}^n(\mvx):j\leq i,1\leq k\leq K}$ for $0 \le i < K$, define \begin{align*}
        \rho^n_{i+1}(\mvx) &= \text{the }(i+1)^{th} \text{ smallest value in }\set{T_{j,k}^n(\mvx):0\leq j\leq i,1\leq k\leq K},\\
        T_{i+1,k}^n(\mvx) &=  \rho^n_{i+1}(\mvx) + \sum\limits_{l=1}^k \lfloor x_{i+1,l}\rfloor, \ {T^n_{i+1,0} = \rho^n_{i+1}(\mvx)}.
    \end{align*}
\end{itemize} 
Also, define $\tau^n_{j,k}(\mvx) = \log\left(\frac{T_{j,k}^n(\mvx)}{T_{j,k-1}^n(\mvx)}\right)$ and $\tau^n_{1:K}(\mvx) = \left(\tau^n_{j,k}(\mvx):0\leq j\leq K, k\leq K\right)$ - the values of inter-arrival times in $\log$-scale on the event $E_{\mvx}$. { We also note that with the above definitions, one can also view $\rho_{i}^n(\mvx)$ and $T^n_{i,k}(\mvx)$ as maps from $\mvx \in \bR_+^{K^2+K} \to \bN$.}

 On the event $E_{\mvx}$, an incoming vertex { attaches to a vertex in the fringe of vertex $n$, observed by the incoming vertex at that stage, only at times} $
    \cA_{\mvx} = \set{T^n_{j,k}(\mvx):0\leq j\leq K, 1\leq k\leq K }$. Also, let $
    N_{\mvx} =\sup \cA_{\mvx}$ and $
    \cI_{\mvx} = \set{n+1,n+2,\dots,N_{\mvx}}\setminus\cA_{\mvx}$ be the times when the fringe is inactive, meaning there are no attachments.
    
In the discrete-time network process $\set{\cT(n):n\geq 1}$, an incoming vertex can only attach to a single existing vertex in the network. So, we say $\mvx \in \bR^{K^2+K}$ is \emph{valid} if all $T_{j,k}^n(\mvx)$ are distinct for $0\leq j\leq K$ and $1\leq k\leq K$. 

For $0\leq j\leq K$ and $i \in \mathbb{N}$, { let $p^n_j(i,\mvx)$ denote the probability that an incoming vertex at time $i$ attaches to the $j^{th}$ vertex in the fringe of vertex $n$ given the history up to time $i-1$ on the event $\set{T^n_{j,k}-T^n_{j,k-1} = \lfloor x_{j,k}\rfloor \  \forall \ k \text{ such that } T_{j,k}^n(\mvx) < i}$. { The condition $T_{j,k}^n(\mvx) < i$ is needed to ensure that vertex labeled $T_{j,k}^n$ which will be part of the fringe eventually has arrived before time $i$, as the vertex $i$ can only attach to a vertex that has arrived before it.} Explicitly, we have  }
\begin{align}\label{eqn:prob-step}
\begin{split}
    p^n_j(i,\mvx) &= \frac{\ind\set{i\leq T_{j,K}^n(\mvx)}}{2+\alpha}
        \left[\sum\limits_{k =0}^K \E\left(\frac{\ind\set{i(1-\xi) \geq T_{j,k}^n(\mvx)}}{\lfloor i(1-\xi)\rfloor}\right) \right.\\
        &\qquad\qquad\qquad\qquad\left.+ \alpha \E\left(\frac{\ind\set{i(1-\xi)\geq \rho^n_{j}(\mvx)}}{\lfloor i(1-\xi)\rfloor}\right)\right].
        \end{split}
\end{align} { In the right hand side of the above equation, the summation corresponds to the degree of vertex $j$ the incoming vertex $i$ observes. Furthermore,
 note that the sum above runs over $k$ such that $T_{j,k}^n(\mvx) < i$, and hence the right-hand side depends only on the network run up to time $i-1$.}
We then have 
\begin{align}
\label{eqn:prob-Ex}
    \vP_{0,n}(\mvx) = \pr(E_{\mvx}) &{= \ind\set{\mvx \text{ is \emph{valid}}} \prod_{i\in \cI_{\mvx}} \left(1-\sum\limits_{j :\rho^n_j(\mvx) <i} p_j^n(i,\mvx)\right) . \prod\limits_{j,k\leq K}p_{j}^n\left(T_{j,k}(\mvx),\mvx\right)}\notag\\
    &= \ind\set{\mvx \text{ is \emph{valid}}} \prod_{i\in \cI_{\mvx}} \left(1-\sum\limits_{j=0}^K p_j^n(i,\mvx)\right) . \prod\limits_{j,k\leq K}p_{j}^n\left(T_{j,k}(\mvx),\mvx\right).
\end{align} { The last equality follows from the fact that, on the event $E_{\mvx}$, at most $K$ vertices are in the fringe of vertex $n$ and $p_j^n(i,\mvx) = 0$ for $j \geq i$.

At this stage, also note that for a discrete random variable $\mvX$ taking integer values, for any continuous function $g$ with compact support, we have $$\E(g(\mvX)) = \sum_{m\in \bZ}g(m) \pr(\mvX = m)= \int_{\bR}g(\lfloor x \rfloor)\pr(\mvX = \lfloor x \rfloor)dx.$$ One can extend the above to discrete random vectors taking values in $\bN^{K^2 + K}$, and thus by ~\eqref{eqn:prob-Ex}. we have $$
    \E(f(\tau^n_{1:K})) = \int_{\bR_+^{K^2 + K}} f(\tau^n_{1:K}(\mvx)) P_{0,n}(\mvx)d\mvx. $$}

\noindent { \textbf{Step$-3$:} We now approximate $\vP_{0,n}(\mvx)$ using the following sequence of approximations.} \begin{enumerate}
    \item We replace the terms in the first product in~\eqref{eqn:prob-Ex} by exponentials to get $\vP_{1,n}(\mvx)$ where \begin{align}\label{eqn:prob-ex-1}
    \vP_{1,n}(\mvx) = \ind\set{\mvx \text{ is \emph{valid}}} \prod\limits_{i\in \cI_{\mvx}} \exp\left(-\sum\limits_{j=0}^K p_j^n(i,\mvx)\right)  . \prod\limits_{j,k\leq K}p_{j}^n\left(T_{j,k}^n(\mvx),\mvx\right).
\end{align}
\item We next replace the $\lfloor i(1-\xi)\rfloor$ with $i(1-\xi)$ in~\eqref{eqn:prob-step} to get $\vP_{2,n}(\mvx)$ with \begin{align}\label{eqn:prob-ex-2}
\vP_{2,n}(\mvx) = \ind\set{\mvx \text{ is \emph{valid}}} \prod\limits_{i\in \cI_{\mvx}} \exp\left(-\sum\limits_{j=0}^K q_j^n(i,\mvx)\right)  \cdot \prod\limits_{j,k\leq K}q_{j}^n\left(T_{j,k}^n(\mvx),\mvx\right),
\end{align} 
where 
\begin{align}\label{eqn:q_j-defn}
\begin{split}
    q^n_j(i,\mvx) &= \frac{\ind\set{i\leq T_{j,K}^n(\mvx)}}{2+\alpha}
        \left[\sum\limits_{k=0}^K \E\left(\frac{\ind\set{i(1-\xi) \geq T_{j,k}^n(\mvx)}}{i(1-\xi)}\right) \right.\\
        &\qquad\qquad\qquad\qquad\qquad\qquad\left.+ \alpha \E\left(\frac{\ind\set{i(1-\xi)\geq \rho^n_{j}(\mvx)}}{ i(1-\xi)}\right)\right].
        \end{split}
\end{align}

\item  Finally, we approximate the sums of type $\sum\limits_{i=m_1}^{m_2}\frac{1}{i}$ with $\log\frac{m_2}{m_1}$ appearing in the exponent of the first term in~\eqref{eqn:prob-ex-2} to get $\vP_{3,n}(\mvx)$ where \begin{align}\label{eqn:prob-ex-3}
        \vP_{3,n}(\mvx) = \ind\set{\mvx \text{ is \emph{valid}}} \exp\left(-\sum\limits_{j=0}^K r_j^n(\mvx)\right)  . \prod_{j=0}^{K}\prod_{k=1}^K q_{j}^n\left(T_{j,k}^n(\mvx),\mvx\right).
    \end{align} where \begin{align}\label{eqn:r_j-defn}
        r^n_j(\mvx) = \sum_{k=0}^K H\left(\log\left(\frac{T_{j,K}^n(\mvx)}{T_{j,k}^n(\mvx)}\right)\right)+ \alpha H\left(\log\left(\frac{T_{j,K}^n(\mvx)}{\rho_j^n(\mvx)}\right)\right).
    \end{align} 
\end{enumerate} An upper bound on errors made by each approximation is given by the following lemma, whose proof is given in Appendix~\ref{app}. 
\begin{lem}\label{lem:bigggg-lemma}Let $\cC$ be a bounded subset in $R_{+}^{K^2+K}$. Then we have 
\begin{align*}
    \sup_{i=0,1,2} \, \sup_{\mvx \in \cC}\left|\vP_{i,n}(\mvx) - \vP_{i+1,n}(\mvx) \right|\leq  \frac{CK^4}{n^2} \left[n+\sup\limits_{\mvx\in\cC}\left(\sum\limits_{j=0}^K \sum\limits_{k=1}^K x_{j,k}\right)\right]
\end{align*}for some constant $C \geq 0$ independent of the set $\cC$, $K$ and $n$.
\end{lem}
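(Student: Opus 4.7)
The plan is to bound the three differences $\lvert \vP_{i,n}(\mvx) - \vP_{i+1,n}(\mvx)\rvert$ for $i=0,1,2$ separately via standard analytic inequalities, relying on two preliminary estimates that hold uniformly over $\mvx \in \cC$. First, whenever any indicator appearing in the definitions of $p_j^n$ and $q_j^n$ is nonzero, the denominators satisfy $\lfloor i(1-\xi)\rfloor \geq T_{j,k}^n(\mvx) \geq \rho_j^n(\mvx) \geq n$, which gives $p_j^n(i,\mvx),\, q_j^n(i,\mvx) \leq (K+1+\alpha)/((2+\alpha)n)$ and hence $\sum_{j=0}^K p_j^n(i,\mvx) \leq CK^2/n$. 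Second, since the latest descendant birth time $N_{\mvx}$ is at most $n$ plus the sum of all coordinates of $\mvx$, the inactive time set has cardinality $|\cI_{\mvx}| \leq N_{\mvx} - n \leq \sup_{\mvx\in \cC}\sum_{j,k\leq K} x_{j,k}$.

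For Step 1 (from $\vP_{0,n}$ to $\vP_{1,n}$), the elementary inequality $\lvert\prod_i(1-a_i) - \exp(-\sum_i a_i)\rvert \leq \sum_i a_i^2$ valid for $a_i \in [0,1/2]$, applied to $a_i := \sum_{j=0}^K p_j^n(i,\mvx)$, combined with $\prod_{j,k} p_j^n(T_{j,k}^n,\mvx) \leq 1$, yields a contribution of order $|\cI_{\mvx}|\cdot (CK^2/n)^2 \leq CK^4 n^{-2}\sup\sum x_{j,k}$, well within the desired bound. For Step 2 (replacing $\lfloor i(1-\xi)\rfloor$ by $i(1-\xi)$), the pointwise estimate $\lvert 1/\lfloor m\rfloor - 1/m\rvert \leq 2/m^2$ for integer $m \geq 1$ combined with the indicator constraint gives $\lvert p_j^n(i,\mvx) - q_j^n(i,\mvx)\rvert \leq CK/n^2$. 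The difference $\lvert\vP_{1,n} - \vP_{2,n}\rvert$ is then split into a contribution from the exponential factors, handled by $\lvert e^{-A}-e^{-B}\rvert \leq \lvert A-B\rvert$ together with a sum over $i\in\cI_{\mvx}$, and a contribution from the product $\prod_{j,k}(\cdot)$, handled by telescoping using that each factor is bounded by $1$. The total is of order $O\!\left(K^3 n^{-2} + K^2 n^{-2}\sup\sum x_{j,k}\right)$, again inside the target.

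The main obstacle is Step 3, where one must show $\lvert\sum_{i\in\cI_{\mvx}}q_j^n(i,\mvx) - r_j^n(\mvx)\rvert = O(K/n)$ for each $j$. The strategy is to swap the sum over $i$ with the expectation over $\xi$ in the definition~\eqref{eqn:q_j-defn} via Fubini, producing inner sums of the form $\sum_{i=\lceil T_{j,k}^n/(1-\xi)\rceil}^{T_{j,K}^n} 1/i$ and an analogue with $\rho_j^n(\mvx)$ in place of $T_{j,k}^n(\mvx)$. Each such sum is approximated by $\log(T_{j,K}^n(1-\xi)/T_{j,k}^n) = \log(T_{j,K}^n/T_{j,k}^n) - \eta$, using $1-\xi = e^{-\eta}$, with Riemann-sum error of order $1/T_{j,k}^n = O(1/n)$. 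Taking expectation against the identity $1/(1-\xi) = e^{\eta}$ with density $dF_\eta$ exactly reproduces $(2+\alpha)H(\log(T_{j,K}^n/T_{j,k}^n))$ via the definition $H(x) = \int_0^x e^u(x-u)\,dF_\eta(u)/(2+\alpha)$, thereby recovering $r_j^n(\mvx)$ term by term. Summing the Riemann errors over the $O(K)$ inner terms for each $j$ and then over $j \in\{0,\ldots,K\}$ gives a total discrepancy of $O(K^3/n)$ in the exponent; one final application of $\lvert e^{-A}-e^{-B}\rvert \leq \lvert A-B\rvert$ transfers this into the bound $\lvert\vP_{2,n} - \vP_{3,n}\rvert \leq CK^3/n \leq CK^4 n^{-2}\cdot n$, using that the multiplicative product factor $\prod_{j,k} q_j^n(T_{j,k}^n,\mvx)$ is identical in $\vP_{2,n}$ and $\vP_{3,n}$.
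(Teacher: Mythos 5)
Your proof is correct and takes essentially the same route as the paper's: case $i=0$ via $\lvert e^{-x}-(1-x)\rvert\le x^2$ and telescoping with factors bounded by one; case $i=1$ via the pointwise bound $\lvert p_j^n-q_j^n\rvert=O(K/n^2)$ from $\lfloor m\rfloor\ge m-1$ with $m\ge n$; and case $i=2$ via Fubini, the Riemann-sum estimate $\lvert\sum 1/i-\log(z_2/z_1)\rvert\le 5/N$, and the identification $\E\bigl[\ind\{\eta\le x\}e^{\eta}(x-\eta)\bigr]=(2+\alpha)H(x)$, transferred to the product via $\lvert e^{-A}-e^{-B}\rvert\le\lvert A-B\rvert$. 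One loose end you share with the paper's writeup: the exponent in $\vP_{2,n}$ sums over the inactive set $\cI_{\mvx}$ rather than the full range $\{n+1,\dots,T^n_{j,K}(\mvx)\}$ on which the Riemann sum is taken; the discrepancy is at most $K(K+1)$ terms, each $O(K/n)$, hence $O(K^3/n)$ and absorbed into the stated bound, but it merits a sentence.
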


\noindent{ {\bf Step-4:} Using all the above observations, we now complete the proof. Observe that by Lemma~\ref{lem:bigggg-lemma}, we have \begin{align*}
    \left|\E(f(\tau^n_{1:K})) - \int_{\bR_+^{K^2+K}}f(\tau^n_{1:K}(\mvx))\vP_{3,n}(\mvx)\right| \leq \frac{C}{n}
\end{align*} for some constant $C \geq 0$ {depending only on $K$ and $L$ where the support of $f$ is $\mathcal{S} \subseteq[0,L]^{K^2 + K}$}.} Thus, to prove~\eqref{eqn:exp-convergence}, it is sufficient to show \begin{align}\label{eqn:sufficient-condition}
    \lim\limits_{n\to \infty}\int_{\bR_+^{K^2+K}} f(\tau^n_{1:K}(\mvx)) \vP_{3,n}(\mvx)d\mvx  = \E\left(f(\sE_{1:K})\right).
\end{align}
To compute the integral on LHS of~\eqref{eqn:sufficient-condition}, consider the transformation where the inter-arrival times are in exponential scale where we make the change of variables $\mvx \to \mvy$ with { $y_{j,k} = \tau^n_{j,k}(\mvx) = \log\left(\frac{T_{j,k}^n(\mvx)}{T_{j,k-1}^n(\mvx)}\right), \ 0 \le j \le K, 1 \le k \le K$}. More precisely, for $\mvy \in \bR_+^{K^2 + K}$, we define the following:\begin{itemize}
    \item Let $\hat{\rho}^n_0(\mvy) = n$.
    \item For $i < K$, given $\set{\hat{\rho}^n_j(\mvy):0\leq j\leq i}$, let $\hat{\rho}^n_{i+1}(\mvy)$ to be the value of $\rho^n_{i+1}(\mvy)$ when
    \[
    \left\{
    \begin{aligned}
    &T_{j,k}^n-T_{j,k-1}^n
    = \left\lfloor \hat{\rho}^n_j(\mvy)
    \exp\left(\sum\limits_{l=1}^{k-1}y_{j,l}\right)
    \cdot\left(\exp(y_{j,k})-1\right) \right\rfloor :\\
    &\hspace{7cm}0\leq j\leq i,\ 1\leq k\leq K
    \end{aligned}
    \right\}.
    \]
\end{itemize}  
We define $G_n: \mathbb{R}^{K^2 + K} \rightarrow \mathbb{R}^{K^2 + K}$ by $$G_n(\mvy) = \left(\hat{\rho}^n_j(\mvy)\exp\left(\sum\limits_{l=1}^{k-1}y_{j,l}\right)\cdot \left(\exp(y_{j,k})-1\right): 0\leq j\leq K, 1\leq k\leq K \right).$$ { As noted above, the function naturally appears when one transforms the arrival times to exponential scale. Observe that the coordinates of $G_n(y)$ are the inter-arrival times without the floor function.} We compute the integral on LHS of~\eqref{eqn:sufficient-condition} after applying the transformation $\mvx \mapsto G_n(\mvy)$ {(which, as we will see, is the `approximate' inverse of the transformation $\mvy \mapsto \tau^n_{1:K}(\mvx)$)}. Note that the function $G_n(\mvy)$ is piece-wise differentiable and bijective. {Moreover, the $(K^2 + K) \times (K^2 + K)$ matrix of partial derivatives of $\mvy \mapsto G_n(\mvy)$} is lower-triangular (when it is defined). Therefore, the Jacobian of the transformation is \begin{align}\label{Jac}
    \fJ_n(\mvy) = \prod_{j=0}^{K}\prod_{k=1}^K \left[\hat{\rho}^n_j(\mvy)\exp\left({\sum_{i=1}^{k}y_{j,i}}\right)\right].
    \end{align} 
    Let $\hat{T}^n_{j,k}(\mvy) = T^n_{j,k}(G_n(\mvy))$ and $\hat{\tau}^n_{j,k}(\mvy)=\tau^n_{j,k}(G_n(\mvy))$. Then after the transformation, the integral on the LHS of~\eqref{eqn:sufficient-condition} is\begin{align*}
        \int_{\bR_+^{K^2+K}} f(\tau^n_{1:K}(\mvx)) \vP_{3,n}(\mvx)d\mvx = \int_{\bR_+^{K^2+K}} f(\hat{\tau}^n_{1:K}(\mvy)) \vP_{3,n}(G_n(\mvy)) \fJ_n(\mvy)d\mvy. 
    \end{align*} 

    { Recall the definition of $\pr_{3,n}$ from \eqref{eqn:prob-ex-3}. Also, from the definition of $r^n_j$ in~\eqref{eqn:r_j-defn} and $H_K$ in~\eqref{hkdef}, we have $
    r^n_j(G_n(\mvy)) = H_K(\hat{\tau}_j^n(\mvy))$. Since $\hat{\rho}^n_j(\mvy) \geq n$, we have used the bounds on $\hat{T}^n_{j,k}(\mvy)$ obtained above that $\lim\limits_{n\to \infty} \hat{\tau}^n_{j,k}(\mvy) = y_{j,k}$. 
    
    Also, recalling $q^n_j(i,\mvx)$ defined in~\eqref{eqn:q_j-defn} and $\eta = - \log (1- \xi)$, we have for any $0 \le j \le K, 1 \le k \le K$,
\begin{align*}
    &q^n_j(\hat{T}^n_{j,k}(\mvy),G_n(\mvy)) \\
    &= \frac{1}{\hat{T}^n_{j,k}(\mvy)}\left[\sum_{l=0}^{k-1}\int_0^{\log \left(\frac{\hat{T}^n_{j,k}(\mvy)}{\hat{T}^n_{j,l}(\mvy)}\right)}\frac{e^u}{2 + \alpha}dF_\eta(u) + \alpha \int_0^{\log \left(\frac{\hat{T}^n_{j,k}(\mvy)}{\hat{T}^n_{j,0}(\mvy)}\right)}\frac{e^u}{2 + \alpha}dF_\eta(u) \right]\\
    &= \frac{h_k\left(\hat{\tau}_j^n(\mvy)\right)}{\hat{T}^n_{j,k}(\mvy)}
\end{align*}
where $\hat{\tau}^n_{j}(\mvy) := (\hat{\tau}^n_{j,k}(\mvy): 1\leq k\leq K)$, and we recall $h_k$ from~\eqref{hkdef}. Combining this with~\eqref{Jac}, we obtain
\begin{align}\label{eqn:bound-for-dct}
\begin{split}
&\left[\prod\limits_{j=0}^K\prod\limits_{k=1}^K q^n_j(\hat{T}^n_{j,k}(\mvy),G_n(\mvy))\right] \fJ_n(\mvy) \\
&\qquad= \left(\prod_{j=0}^{K}\prod_{k=1}^K h_k(\hat{\tau}^n_{j}(\mvy))\right) \left(\prod_{j=0}^{K}\prod_{k=1}^K \frac{\hat{\rho}^n_j(\mvy)\exp\left({\sum_{i=1}^{k}y_{j,i}}\right)}{\hat{T}_{j,k}^n(\mvy)}\right).
\end{split}
\end{align} Using this along with~\eqref{eqn:bound-for-dct}, we thus obtain
    \begin{align}
        \lim\limits_{n\to\infty} \prod\limits_{j=0}^K \prod\limits_{k=1}^K \left[q^n_j(\hat{T}^n_{j,k}(\mvy), G_n(\mvy))\right] \fJ_n(\mvy) &= \prod_{j=0}^{K}\prod_{k=1}^K h_k(\mvy_j)\label{eqn:pointwise-limit-1},\\
        \lim\limits_{n\to \infty}r_j(G_n(\mvy)) &= H_K(\mvy_j)\label{eqn:pointwise-limit-2},
    \end{align} where $\mvy_j := (y_{j,k} : 1 \le k \le K)$.  Furthermore, by~\eqref{eqn:pointwise-limit-1},~\eqref{eqn:pointwise-limit-2} and definition of $P_{3,n}(\mvx)$ in~\eqref{eqn:prob-ex-3}, we have on the set $\set{\mvy : G_n(\mvy) \text{ is } \emph{valid}} \cap [0, \log\left(e^L + K\right)]^{K^2+K}$, 
\begin{align*}
 \lim\limits_{n\to \infty}\vP_{3,n}(G_n(\mvy)) \fJ_n(\mvy) =  \prod_{j=0}^K \left[ \exp\left(-H_K(\mvy_j)\right)\prod_{k=1}^K h_k(\mvy_j)\right].
\end{align*} The above observations are useful in handling the integral over the region where $\mvx$ is valid. We next argue that the contribution to the integral from invalid points goes to zero as $n\to \infty$ in the following lemma by showing (i) integrand is uniformly bounded, and (ii) the Lebesgue measure of invalid points goes to zero as $n\to \infty.$ We state this in the following lemma and proof is provided in the Appendix \ref{app}.

\begin{lem}\label{lem: invalid-set}
The following hold \begin{enumerate}
\item $\vP_{3,n}(G_n(\mvy)) \fJ_n(\mvy)$ is uniformly bounded by on when $\hat{\tau}^n_{1:K}(\mvy) \in \cS \subseteq [0,L]^{K^2+ K}$ (the support of $f$)
    \item Let $\lambda$ be the Lebesgue measure on $R^{K^2+K}$. Then, as $n\to \infty$, \begin{align*}
        \lambda(\set{\mvy: G_n(\mvy) \text{ is not \emph{valid}}} \cap \cL) \to 0 
    \end{align*} where $\cL = [0,\log(e^L+K)]^{K^2+K}$.
\end{enumerate}
\end{lem}}
 
Finally, using the above lemma and by DCT, we have \begin{align*}
    \lim\limits_{n\to \infty}\int_{\bR_+^{K^2+K}} f(\tau^n_{1:K}(\mvx)) \vP_{3,n}(\mvx)d\mvx &=    \lim\limits_{n\to \infty}\int_{\bR_+^{K^2+K}} f(\hat{\tau}^n_{1:K}(\mvy)) \vP_{3,n}(G_n(\mvy)) \fJ_n(\mvy) d\mvy\\
    &=\int_{\bR_+^{K^2+K}} f(\mvy) \prod_{j=0}^K \left[ \exp\left(-H_K(\mvy_j)\right)\prod_{k=1}^K h_k(\mvy_j)\right]d\mvy\\
    &=\E(f(\sE_{1:K})).
\end{align*} The last line follows from the expression for joint density of $(\sE_{0\leadsto 1},\sE_{1\leadsto 2},\dots,\sE_{k-1\leadsto k})$ in~\eqref{eqn:joint-density}.  This finishes the proof.\qed\\

\noindent {\bf Proof of Corollary~\ref{cor:special-case-macro}:} We now briefly describe the proof of the special case considered in this corollary where the delay has a Bernoulli distribution. The corresponding $\eta$ transform is given by $\eta = p\delta_{0} + (1-p) \delta_{\infty}$. Thus, using Lemma~\ref{lem:hazard-rate-sum}, the hazard for $\cE_{k-1 \leadsto k}$ is given as
\[h_k(x) = \frac{k}{2} \E(e^{\eta} \ind\set{\eta \leq x}) = \frac{kp}{2}. \]
Thus $\cE_{k-1 \leadsto k} \sim \exp(kp/2)$, independent across different $k$. Thus the tail probability, {
\begin{align*}
p_{\Ma \geq }(k) = \pr(\sum_{j=1}^k \cE_{j-1 \leadsto k} \leq T_1 ) &= \E(\exp(-\sum_{j=1}^k \cE_{j-1 \leadsto k})) = \prod_{j=1}^k \frac{kp}{kp+2} \end{align*}
 where the last line follows from properties of exponential distribution and independence of random variables $\cE_{k-1 \leadsto k}$. The result finally follows from the following approximation.$$\prod_{j=1}^k \frac{jp}{jp+2} = \exp\left(\sum_{j=1}^k \log\left(1-\frac{2}{jp}\right)\right)\approx \exp\left(-\frac{2}{p}\sum_{j=1}^k \frac{1}{j}\right)\approx \exp\left(-\frac{2}{p}\log k\right) = k^{-\frac{2}{p}}. $$ }
\qed\\

\subsection{Proofs of Theorem~\ref{thm:macroscopic-local}(b) and (c)}
\label{sec:conden-proofs}

We will start by using Proposition~\ref{prop:edge-equiv-bp} to prove the following. 
\begin{lem}
    \label{lem:mean-dma}
    In the setting of Theorem~\ref{thm:macroscopic-local}, the limit degree distribution $D_{\Ma}$ satisfies 
    \[\E(D_{\Ma}) = \frac{(2+ \alpha) + \alpha \pr(\eta < \infty)}{2 + \alpha-\pr(\eta < \infty)}.\]
\end{lem}

\begin{proof}
    By Prop.~\ref{prop:edge-equiv-bp}, 
    \begin{align}\label{dexp}
    \begin{split}
        \E(D_{\Ma}) &= \E(|\BP_{\ve}(T_1)|) = \E\left(\sum_{i=0}^{\chi(T_1)-1} |\BP^\circ_{\ve,i}(T_1 - \theta_i)| \right)\\
        & = \E\left(\sum_{i=0}^{\infty}\ind\set{T_1 \ge \theta_i}|\BP^\circ_{\ve,i}(T_1 - \theta_i)| \right)\\
        &= \left(\sum_{i=0}^{\infty}\pr(T_1 \ge \theta_i)\right)\E\left(|\BP^\circ_{\ve}(T_1)|\right) = \E\left(\chi(T_1)\right)\E\left(|\BP^\circ_{\ve}(T_1)|\right),
            \end{split}
    \end{align}
    where the fourth equality is a consequence of the memoryless property of exponential random variables, { more precisely, for any $i \ge 0$, the conditional distribution of $T_1 - \theta_i$ given $T_1> \theta_i$ is $\exp(1)$.}
    
    Define $m(t) = \E(|\BP^{\circ}_{\ve}(t)|)$. Using the forward decomposition of the branching process based on the reproduction times of the offspring of the root and their progeny sizes, one gets the standard renewal equation $m(t) = 1+\int_0^t m(t-s) \mu_{\mvzeta_{\ve}}(ds)$. Thus writing  $\fm(t) := e^{-t} m(t)$, the above identity gives 
    $\fm(t) = e^{-t} + \int_0^t \fm(t-s) e^{-s} \mu_{\mvzeta_{\ve}}(ds)$
 and hence, 
    \begin{align*}
        \int_0^\infty \fm(t) dt &= 1 + \int_0^{\infty}\int_0^t \fm(t-s) e^{-s} \mu_{\mvzeta_{\ve}}(ds)dt = 1 + \int_0^{\infty}\left(\int_s^\infty \fm(t-s)dt\right) e^{-s} \mu_{\mvzeta_{\ve}}(ds)\nonumber\\
        &= 1+ \int_0^\infty \fm(t) dt  \cdot \int_0^\infty e^{-s} \mu_{\mvzeta_{\ve}}(ds) \Rightarrow  \int_0^\infty \fm(t) dt = \frac{1}{1-\left(\int_0^\infty e^{-s} d\mu_{\mvzeta_{\ve}}(ds)\right)}.
    \end{align*}
    Now the term in the denominator, 
    \begin{align}\label{intrep}
        \int_0^\infty e^{-s}d\mu_{\mvzeta_{\ve}}(ds) 
        &= \frac{1}{2 + \alpha}\int_0^\infty e^{-s}\int_0^s e^udF_\eta(u) ds\\
        & = \frac{1}{2 + \alpha} \int_0^\infty \bigg(\int_u^\infty e^{-s} ds \bigg) e^udF_\eta(u) = \frac{1}{2+\alpha}\int_0^\infty dF_\eta(u) = \frac{\pr(\eta < \infty)}{2 + \alpha},
    \end{align}
    and hence,
    \begin{equation}\label{bexp}
        \E\left(|\BP^\circ_{\ve}(T_1)|\right) = \int_0^\infty \fm(t) dt = \frac{2 + \alpha}{2 + \alpha - \pr(\eta < \infty)}.
    \end{equation}
    Moreover, as $\chi(\cdot)$ has rate $r_{\chi}(t) := \frac{\alpha}{2 + \alpha} r(t), t \ge 0,$
    $$
    \E\left(\chi(T_1)\right) = 1 + \int_0^\infty \frac{\alpha}{2 + \alpha}e^{-t}\int_0^t e^u dF_\eta(u)dt = 1 + \frac{\alpha}{2 + \alpha} \pr(\eta < \infty), 
    $$
    using~\eqref{intrep}.
    Using this and~\eqref{bexp} in~\eqref{dexp} completes the proof. 
\end{proof}
{\noindent{\bf Completing the proof of Theorem~\ref{thm:macroscopic-local}(b) and (c):} Theorem~\ref{thm:macroscopic-local}(b) is proved in Lemma~\ref{lem:mean-dma}. By Theorem~\ref{thm:macroscopic-local}(b), if $\pr(\eta =\infty) = 0$, then the mean number of children of the root in the original branching process $\BP_{\Ma}(T_1)$ is 
\[
\E(\Zma(T_1)) = \E(D_{\Ma})-1  = 1.
\] 
By~\cite[Proposition 3]{aldous-fringe}, $\varpi_{\Ma}$ is a fringe distribution in the sense of Definition~\ref{fringedef}. Moreover, by~\cite[Theorem 13]{aldous-fringe} (see also the paragraph following the theorem), $\varpi_{\Ma}$ is an extremal fringe distribution. Theorem~\ref{thm:macroscopic-local}(c) now follows from Theorem~\ref{thm:aldous-efr-pfr} and Lemma~\ref{ftoeflemma}.}

\subsection{Tail exponents: Proof of Theorem~\ref{thm:non-conden} and Corollary~\ref{cor:degree-tail-macro-non-cond}}\label{tailexpsec}  
Recall again, from Prop.~\ref{prop:edge-equiv-bp}, the characterization of the limit degree $D_\Ma$ in terms of the total size of the edge branching process at a random time $T_1$ i.e.  $|\BP_{\ve}(T_1)|$. We will use this to quantify the behavior of the tail exponents.  We first start with some implications of the assumptions of (a) and (b) of Theorem~\ref{thm:non-conden}.

\begin{lem}
    \label{lem:bpe-moment}
    Consider the edge branching process $\BP_{\ve}$ with offspring distribution $\mvzeta_{\ve}$.
    \begin{enumeratea}
        \item Under Assumption~\ref{ass:xlogx}, with $\lambda$ as in~\eqref{eqn:malthus-edge}, there exists a strictly positive finite random variable $W_{\ve}$ such that $e^{-\lambda t}|\BP_{\ve}(t)| \stackrel{a.s., \bL^1}{\longrightarrow} W_{\ve}$ as $t\to\infty$. 
        \item  Further, suppose for some $l >1$,  $\E([\hat\mvzeta_{\ve}(\gl(1 \wedge \alpha^{-1}))]^l) < \infty$. Then there exists a finite constant $H_l \in (0,\infty)$ such that 
        \[\sup_{t\geq 0}\E\left(\left[e^{-\lambda t}|\BP_{\ve}(t)|\right]^l\right) \leq H_l.\]
    \end{enumeratea}
\end{lem}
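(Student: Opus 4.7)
For part (a), the plan is to invoke the classical Nerman strong law for supercritical continuous-time branching processes on $\BP^\circ_\ve$, and then transfer the conclusion to $\BP_\ve$ via its representation as a branching process with immigration. Assumption~\ref{ass:xlogx} supplies the Malthusian parameter $\lambda$ solving~\eqref{eqn:malthus-edge} together with the $\hat\mvzeta\log^+\hat\mvzeta$ moment condition, so Nerman's theorem gives $e^{-\lambda t}|\BP^\circ_\ve(t)|\to W^\circ_\ve$ almost surely and in $L^1$, with $W^\circ_\ve>0$ a.s.\ (since $\int_0^\infty h(t)\,dt=\infty$ forces every individual to produce infinitely many offspring and hence prevents extinction).

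Define $W_\ve := \sum_{i\ge 0} e^{-\lambda\theta_i}W^\circ_{\ve,i}$. Using $r_\chi=\alpha h$ together with the Malthus equation for $\mvzeta_\ve$,
\[
\E\Bigl[\sum_{i\ge 0}e^{-\lambda\theta_i}\Bigr] = 1+\int_0^\infty e^{-\lambda t}\alpha h(t)\,dt = 1+\alpha,
\]
so by independence of $\{W^\circ_{\ve,i}\}$ from $\chi$, $\E[W_\ve]=(1+\alpha)\E[W^\circ_\ve]<\infty$, while $W_\ve\ge W^\circ_{\ve,0}>0$ a.s. Writing $e^{-\lambda t}|\BP_\ve(t)| = \sum_{i\ge 0}e^{-\lambda\theta_i}\cdot e^{-\lambda(t-\theta_i)}|\BP^\circ_{\ve,i}(t-\theta_i)|\ind\set{\theta_i\le t}$, $L^1$ convergence to $W_\ve$ follows by splitting at a truncation level $N$: the first $N$ terms converge termwise to their Nerman limits, while the tail is uniformly small in $L^1$ because $\sup_s\E[e^{-\lambda s}|\BP^\circ_\ve(s)|]<\infty$. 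The almost-sure upgrade comes from a dominated-convergence argument exploiting the a.s.\ finiteness of $\sup_{s\ge 0}e^{-\lambda s}|\BP^\circ_\ve(s)|$.

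For part (b), the first step is to show $M_l := \sup_t\E[(e^{-\lambda t}|\BP^\circ_\ve(t)|)^l]<\infty$. This is a standard $L^l$-boundedness result for supercritical CTBPs under $\E[\hat\mvzeta_\ve(\lambda)^l]<\infty$, which is implied by the hypothesis since $\lambda(1\wedge\alpha^{-1})\le\lambda$ gives $\hat\mvzeta_\ve(\lambda)\le\hat\mvzeta_\ve(\lambda(1\wedge\alpha^{-1}))$. A direct proof applies Minkowski's inequality, conditional on $\mvzeta_\ve$, to the recursion $|\BP^\circ_\ve(t)|=1+\sum_{\tau_j\in\mvzeta_\ve,\,\tau_j\le t}|\BP^\circ_{\ve,j}(t-\tau_j)|$, producing a self-referential bound on $\phi_l(t):=\E[(e^{-\lambda t}|\BP^\circ_\ve(t)|)^l]^{1/l}$ that iterates, via the Malthus equation, to a uniform bound.

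Given $M_l$, the same conditional-Minkowski argument applied to the immigration decomposition $|\BP_\ve(t)|=\sum_{i\ge 0}|\BP^\circ_{\ve,i}(t-\theta_i)|\ind\set{\theta_i\le t}$ (conditioning on $\chi$, using independence of the $\BP^\circ_{\ve,i}$'s) yields
\[
\bigl\| e^{-\lambda t}|\BP_\ve(t)|\bigr\|_l \le M_l^{1/l}\Bigl(1+\int_0^\infty e^{-\lambda s}\,d\chi'(s)\Bigr),
\]
where $\chi'$ is Poisson with intensity $\alpha h$, reducing the problem to bounding $\E[\hat\chi'(\lambda)^l]$. I expect the main obstacle to lie here: for $\alpha\le 1$ a superposition comparison with $\mvzeta_\ve$ cleanly yields the bound in terms of $\E[\hat\mvzeta_\ve(\lambda)^l]$, but for $\alpha>1$ the denser immigration seems to require controlling the Poisson integral at a slower exponential rate, which is precisely the role of the $(1\wedge\alpha^{-1})$-rescaled Malthusian exponent in the hypothesis.
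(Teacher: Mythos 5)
Your argument follows the paper's proof essentially step by step: part~(a) applies Nerman's theorem to $\BP^\circ_{\ve}$ and sums over the immigration times using finiteness of $\E\bigl[\sum_{i\ge 0} e^{-\lambda\theta_i}\bigr]$ (you correctly get $1+\alpha$; the paper's displayed value $\alpha$ omits the $i=0$ term, an inconsequential slip), and part~(b) reduces to a uniform $L^l$ bound for $\BP^\circ_{\ve}$ combined with a moment bound for $\int_0^\infty e^{-\lambda t}\chi(dt)$. The only genuine differences are cosmetic: the paper cites M\'ori--Rokob for the $L^l$ bound rather than running your Minkowski recursion, and it uses a Jensen inequality rather than conditional Minkowski to splice that bound with the immigration stream.

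The step you flag as the main obstacle is indeed the delicate one, and the paper's own phrasing there, that $\{\chi(t)\}$ has the same law as $\{\mvzeta_{\ve}(\alpha t)\}$, is not literally correct: the former (minus the atom at $0$) has intensity $\alpha h(t)$ while the latter has intensity $\alpha h(\alpha t)$. What is true and what suffices is stochastic domination. Since $h$ is nondecreasing, for $\alpha\le 1$ one has $\alpha h(t) \le h(t)$ and hence $\chi - \delta_0 \stod \mvzeta_{\ve}$, while for $\alpha>1$ one has $\alpha h(t) \le \alpha h(\alpha t)$ and hence $\chi - \delta_0 \stod \mvzeta_{\ve}(\alpha\cdot)$. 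After the change of variable $s=\alpha t$, the domination gives $\int_0^\infty e^{-\lambda t}\,\chi(dt) \stod 1 + \hat\mvzeta_{\ve}\bigl(\lambda(1\wedge\alpha^{-1})\bigr)$ in either regime, which closes your loose end exactly at the rescaled exponent appearing in the hypothesis, as you anticipated.
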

\begin{proof}
The parameter $\lambda$ in~\eqref{eqn:malthus-edge} is the so-called Malthusian rate of growth of the edge branching process $\BP^\circ_{\ve}$, which under the ``$\hat\mvzeta\log^+(\hat\mvzeta)$'' condition for continuous time branching processes implies
\[
e^{-\lambda t}|\BP^{\circ}_{\ve}(t)| \stackrel{a.s., \bL^1}{\longrightarrow} W^{\circ}_{\ve},
\]
see~\cite{jagers-nerman-1,jagers-nerman-2,jagers-ctbp-book}, where the limiting random variable is finite and strictly positive. In particular,
\[
\sup_{t \ge 0}e^{-\lambda t}\E(|\BP^{\circ}_{\ve}(t)|)< \infty.
\]
For $i \ge 0$, let $W^{\circ}_i$ denote the almost sure limit of $e^{-\lambda t}|\BP^{\circ}_{\ve,i}(t)|$ as $t \to \infty$. Recall that the point process $\chi$ has rate $r_{\chi}(t) := \frac{\alpha}{2 + \alpha} r(t), \, t \ge 0,$ { where we recall $r(t) := \int_0^t e^s dF_{\eta}(s)$,} and hence,
\begin{equation*}
\E\left(\sum_{i=0}^{\infty}e^{-\gl \theta_i}\right) = \int_0^\infty e^{-\lambda t}r_{\chi}(t)dt = \alpha\int_0^\infty e^{-\lambda t} \mu_{\mvzeta_{\ve}}(dt) = \alpha < \infty
\end{equation*}
using the definition of $\lambda$.

Thus, writing $W_{\ve} := \sum_{i=0}^{\infty}e^{-\lambda \theta_i}W^{\circ}_i$, we conclude from the above observations that $e^{-\lambda t}|\BP_{\ve}(t)| \stackrel{a.s.}{\rightarrow} W_{\ve}$ as $t \to \infty$. Moreover, $\E(W_{\ve}) < \infty$ gives, in particular, the (almost sure) finiteness of $W_{\ve}$. Further,
\begin{align*}
\E\left(\Big|e^{-\gl t}|\BP_{\ve}(t)| - W_{\ve}\Big|\right) &\le \E\left(\sum_{i=0}^\infty e^{-\gl \theta_i} \Big| e^{-\gl (t- \theta_i)}|\BP^{\circ}_{\ve,i}(t - \theta_i)| - W^{\circ}_i\Big|\right)\\
&\qquad + \E\left(\sum_{i=\chi(t)}^\infty e^{-\gl \theta_i}W^{\circ}_i\right).
\end{align*}
The convergence to zero of the right-hand side above follows from the recorded observation { that $e^{-\lambda t}|\BP^{\circ}_{\ve}(t)| \stackrel{\bL^1}{\longrightarrow} W^{\circ}_{\ve}$} along with the dominated convergence theorem and the independence of $\chi$ and $\{\BP^{\circ}_{\ve,i} : i \in \mathbb{N}_0\}$.

To prove the second result, we first note by the equivalence of the existence of moments of $\hat\mvzeta_{\ve}(\gl)$ and the corresponding  moments of the normalized branching process, see~\cite[Theorem 4]{mori2019moments}, that there exists $H^\circ_l \in (0, \infty)$ such that
\[\sup_{t\geq 0}\E\left(\left[e^{-\gl t}|\BP^\circ_{\ve}(t)|\right]^l\right) \leq H^\circ_l.\]
{ By H\"older's inequality $\sum_{i=0}^n |a_ib_i| \le \left(\sum_{i=0}^n |a_i|^p\right)^{1/p}\left(\sum_{i=0}^n |b_i|^q\right)^{1/q}$, with $n=\chi(t)-1$, $p=\frac{l}{l-1}, q = l$, $a_i = e^{-\gl \frac{l-1}{l} \theta_i}$, $b_i = e^{-\gl \frac{1}{l} \theta_i}e^{-\gl (t-\theta_i)}|\BP^{\circ}_{\ve,i}(t - \theta_i)|$,}
\begin{align*}
\E\left(\left[e^{-\lambda t}|\BP^\circ_{\ve}(t)|\right]^l\right) &= \E\left(\left[\sum_{i=0}^{\chi(t)-1}e^{-\gl \theta_i}e^{-\gl (t-\theta_i)}|\BP^{\circ}_{\ve,i}(t - \theta_i)|\right]^l\right)\\
&\le \E\left(\left[\sum_{i=0}^{\chi(t)-1}e^{-\gl \theta_i}\right]^{l-1}\left[\sum_{i=0}^{\chi(t)-1}e^{-\gl \theta_i}\left(e^{-\lambda (t-\theta_i)}|\BP^{\circ}_{\ve,i}(t - \theta_i)|\right)^l\right]\right)\\
& \le H^\circ_l \,\E\left(\left[\sum_{i=0}^{\chi(t)-1}e^{-\gl \theta_i}\right]^{l}\right) \le H^\circ_l \,\E\left(\left[\int_0^\infty e^{-\gl t}\chi(dt)\right]^l\right),
\end{align*}
where we applied the independence of $\chi$ and $\{\BP^{\circ}_{\ve,i} : i \in \mathbb{N}_0\}$ in the second inequality. By rate considerations, the point process $\{\chi(t) : t \ge 0\}$ has the same law as $\{\mvzeta_{\ve}(\alpha t) : t \ge 0\}$. Hence,
\begin{align*}
    \E\left(\left[\int_0^\infty e^{-\lambda t}\chi(dt)\right]^l\right) &= \E\left(\left[\int_0^\infty \lambda e^{-\lambda t}\chi(t)dt\right]^l\right) = \E\left(\left[\int_0^\infty \lambda e^{-\lambda t}\mvzeta_{\ve}(\alpha t)dt\right]^l\right)\\
    &= \E\left(\left[\int_0^\infty e^{-\lambda \alpha^{-1} t}\mvzeta_{\ve}(dt)\right]^l\right) = \E([\hat\mvzeta_{\ve}(\gl \alpha^{-1})]^l) < \infty
\end{align*}
by our assumption. The result follows from the above two displays.
\end{proof}

\noindent{\bf Completing the proof of Theorem~\ref{thm:non-conden}:} Let us first prove part (a), namely the lower bound. By Lemma~\ref{lem:bpe-moment}(a), there exist $\delta, t_0, \eta >0$ such that

\begin{equation}
    \label{eqn:248}
     \pr(e^{-\lambda t}|\BP_{\ve}(t)| >\delta )\geq \eta \qquad \forall~t\geq t_0.
\end{equation}
Then for the degree distribution, for all $k\geq \delta e^{\lambda t_0}$, with $C =\eta \delta^{1/\lambda} $,
\[\pr(D_{\Ma}\geq k) = \int_0^\infty e^{-t}\pr(|\BP_{\ve}(t)|\geq k) dt \geq \int_{\frac{\log (k/\delta)}{\lambda}}^{\infty} e^{-t}\pr(|\BP_{\ve}(t)|\geq k) dt \geq \frac{C}{k^{1/\lambda}}.   \]
To prove (b), assume there exists $l> 1/\gl$ such that $\E([\mvzeta_{\ve}(\gl(1 \wedge \alpha^{-1}))]^l) < \infty$. Using Lemma~\ref{lem:bpe-moment}(b) and Markov's inequality gives for any $x>0$ and $t\geq  0$,
\[\pr(|\BP_{\ve}(t)| > x) \leq \frac{H_l}{[e^{-\lambda t} x]^l}.\]
Thus,
\begin{align*}
    \pr(D_{\Ma} \geq k) &=\int_0^\infty e^{-t}\pr(|\BP_{\ve}(t)|\geq k) dt \leq \int_0^{\frac{\log{k}}{\lambda}} \frac{H_l}{[e^{-\lambda t} x]^l} e^{-t} dt + \int_{\frac{\log{k}}{\lambda}}^\infty e^{-t} dt \\
    &\le \left[\frac{H_l}{\lambda l -1} + 1\right] \frac{1}{k^{1/\lambda}}.
\end{align*}
This completes the proof of the theorem. \qed\\

\noindent{\bf Completing the proof of Corollary~\ref{cor:degree-tail-macro-non-cond}:}
Part (a) of Corollary~\ref{cor:degree-tail-macro-non-cond} follows directly from Theorem~\ref{thm:non-conden}.~\eqref{eqn:gltheta} in Corollary~\ref{cor:degree-tail-macro-non-cond}(b) follows by a straightforward algebraic manipulation. We omit the details. To prove the remaining claims of part (b), for any $\gl' >0$, $l >1$,
\begin{equation}\label{lapbound}
\E\left([\hat\mvzeta_{\ve}(\gl')]^l\right) = \E\left(\left[\int_0^\infty \gl'e^{-\gl' t}\mvzeta_{\ve}(t)dt\right]^l\right) \le \int_0^\infty \gl'e^{-\gl' t}\E\left(\left[\mvzeta_{\ve}(t)\right]^l\right)dt
\end{equation}
where the last step follows from Jensen's inequality. { Using the explicit form of the rate associated with the process $\mvzeta_{\ve}$, we conclude that $\mvzeta_{\ve}(t)$ is a Poisson random variable with mean
$$
\frac{1}{2+ \alpha}\int_0^t\int_0^se^u\theta e^{-\theta u}du ds = \frac{\theta}{(2+ \alpha)(1-\theta)}\left(\frac{1}{1 - \theta}(e^{(1-\theta)t}-1) - t\right)
$$
if $\theta \neq 1$ and $t^2/(4 + 2 \alpha)$ if $\theta = 1$. By standard moment bounds for Poisson random variables, if $X \sim \Poi(\gamma)$, then for any $l >1$, there exists $c_l >0$ such that $\E(X^l) \le c_l \gamma^l$. Using these observations, for any $l>1$, there exists $C_l \in (0, \infty)$ (also depending on $\theta, \alpha$) such that 
$$
\E\left(\left[\mvzeta_{\ve}(t)\right]^l\right) \le C_l\left(t^l + t^{2l}\ind\set{\theta=1} + e^{l(1-\theta)t}\ind\set{\theta<1}\right).
$$}
Checking the assumptions of Theorem~\ref{thm:non-conden}(a) for all values of $\alpha \ge 0, \theta>0$ (thereby giving the lower bound on the degree distribution) and those of Theorem~\ref{thm:non-conden}(b) when $\alpha \ge 0, \theta \ge 1$ (giving degree distribution upper bound) is now routine using the above and the observation $\lambda + \theta>1$. To check the moment condition of Theorem~\ref{thm:non-conden}(b) when $\alpha=0$, $\theta \in (0,1)$, we use the Poisson moment bound above in~\eqref{lapbound} to obtain for any $l >1$ a constant $C'_l \in (0, \infty)$ such that
$$
\E\left([\hat\mvzeta_{\ve}(\gl)]^l\right) \le C'_l \int_0^{\infty}e^{(l(1-\theta)- \gl)t}dt.
$$
As we only need finiteness of the right-hand side for some $l>1/\gl$, it suffices to show $\gl^2>1-\theta$. Using the explicit formula of $\gl$ in~\eqref{eqn:gltheta}, note that
\begin{align*}
    \gl^2 &= \frac{\left((1-\theta) + \sqrt{1 + \theta^2}\right)^2}{4} = \frac{1+ \theta^2 - \theta + (1-\theta)\sqrt{1 + \theta^2}}{2}\\
    &\ge \frac{1+ \theta^2 - \theta + (1-\theta)}{2} = (1-\theta) + \frac{\theta^2}{2}
\end{align*}
which completes the proof.\qed

\subsection{Root degree behavior: Proof of Theorem~\ref{thm:macro-max-deg}} 
\label{sec:macro-max-deg-proof}
Here, we consider the model with $\alpha =0$, and the goal is to prove Theorem~\ref{thm:macro-max-deg}.  The proof of the root degree asymptotics relies on a careful quantification of the approximation of the network dynamics by that of the edge branching process described in Section~\ref{edgedesc}, that extends beyond the `fringe' (which the local limit captures) to the older vertices of the network evolution. First recall the ``edge copying equivalence'' of preferential attachment explained in Fig.~\ref{fig:three graphs}. We describe a construction which is a natural extension to the delay regime. 
\begin{constr}[Copying construction of the delayed model]
\label{constr-copy-delay}
    Consider the following random tree model $\set{\overline{\cT}(n): n\geq 1}$, started with a single edge $\overline{\cT}(1) = \{e_1\} =\{\set{v_0, v_1}\} $ and recursively constructed as follows:
    \begin{enumeratea}
        \item For $n\geq 2$ assume the tree $\overline{\cT}(n-1)$ consists of vertex set $\cV(\overline{\cT}(n)) = \set{v_1, v_1, \ldots, v_{n-1}}$ and edge set $\cE(\overline{\cT}(n)):= \set{e_1, e_2, \ldots, e_n}$ in the order of attachment. 
        \item With $\xi\sim F_\xi$ denoting the delay distribution as before, define the pmf $\vp^{\sss(n)} = \set{p_j^n: j\in [n]}$,{
        \begin{equation}
        \label{eqn:pjss}
            p_j^n:= \int_0^{1 - {j}/{n}}\frac{1}{\lfloor n(1-x)\rfloor}dF_{\xi}(x) + \ind\set{j=1}\int_{1 - {1}/{n}}^1 1\cdot  dF_{\xi}(x), \qquad j\in [n]. 
        \end{equation}}
        \item {\bf Copying procedure: } To transition to $\overline{\cT}(n+1)$, first select one of the edges $\cE(\overline{\cT}(n))$ using $\vp^{\sss(n)}$ (i.e.  $e_j$ selected with probability $p_j^n$).  Suppose $e_j = \set{u,v}$ has been selected. Then $\overline{\cT}(n+1)$ is obtained from $\overline{\cT}(n)$ by adding a new vertex $v_{n+1}$ connected to $\overline{\cT}(n)$ with a new edge which is  $\set{v_{n+1}, u}$ with probability $1/2$ or edge $\set{v_{n+1}, v}$ with probability $1/2$.  
    \end{enumeratea}
\end{constr}
The following is elementary to check.

\begin{lem}
\label{lem:equiv}
    Consider the delayed preferential attachment model with $\alpha =0$ and delay distribution $\xi \sim F_\xi$.  As models of growing random trees,  $\set{\overline{\cT}(n):n\geq 1} \stackrel{d}{=}\set{{\cT}(n):n\geq 1} $. 
\end{lem}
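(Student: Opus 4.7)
The plan is to proceed by induction on $n$, comparing the one-step transition kernels of the two models. The key tool is the classical \emph{edge-copying identity} for linear preferential attachment: for any finite tree $T$ with edge set $E$,
\[
\frac{\deg(v,T)}{\sum_{u}\deg(u,T)} \;=\; \frac{\deg(v,T)}{2|E|} \;=\; \frac{1}{2}\sum_{e\in E,\ v\in e}\frac{1}{|E|},
\]
so that sampling a vertex with probability proportional to its degree is equivalent to sampling a uniform edge of $T$ and then one of its two endpoints with equal probability. This identity is the bridge between the ``attach-to-vertex'' dynamics of Definition~\ref{defn:model} and the ``copy-an-edge'' dynamics of Construction~\ref{constr-copy-delay}.

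\textbf{Matching the one-step kernel.} Fix the inductive hypothesis that $\overline{\cT}(k)\stackrel{d}{=}\cT(k)$ for all $k\le n$, under a bijective correspondence in which the $j$-th edge to be added in either process is labeled $e_j$, and in which the two root-labels $v_0,v_1$ of $\overline{\cT}$ are identified with the single root $v_1$ of $\cT$, so the phantom initial edge $e_1=\{v_0,v_1\}$ of $\overline{\cT}(1)$ acts as a self-loop at the root and introduces no real connectivity. In the original model, conditional on $\cT(n)$ and $\xi_{n+1}=x$, the incoming vertex sees $\cT(m)$ with $m=\lfloor n(1-x)\rfloor$ and attaches to $v_i$ with probability $\deg(v_i,m)/(2(m-1))$ when $m\ge 2$, and attaches deterministically to the root when $m\le 1$. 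Applying the edge-copying identity to $\cT(m)$ and marginalizing over $\xi_{n+1}\sim F_\xi$, the probability that edge $e_j$ is the one copied is, after incorporating the phantom $e_1$ so that the snapshot's effective edge count is $\lfloor n(1-x)\rfloor$ rather than $\lfloor n(1-x)\rfloor-1$,
\[
\int_{\{x:\ \lfloor n(1-x)\rfloor\ge j\}}\frac{dF_\xi(x)}{\lfloor n(1-x)\rfloor} \;+\; \ind\set{j=1}\int_{1-1/n}^1 dF_\xi(x),
\]
which is exactly $p_j^n$ in~\eqref{eqn:pjss}. The indicator term captures the boundary regime $m\le 1$ where the original snapshot is too small to contain any edge and attachment defaults to the root; in the copying construction this is precisely the event that the phantom $e_1$ is selected (both of whose endpoints are identified with the root). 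Conditional on the chosen edge, both procedures pick one of its two endpoints uniformly, so the full conditional law of the attachment target of $v_{n+1}$ agrees in the two models.

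\textbf{Conclusion and main obstacle.} Since the one-step transition kernels agree under the inductive coupling of labels, $\overline{\cT}(n+1)\stackrel{d}{=}\cT(n+1)$, closing the induction. The whole argument is a direct conditional-probability computation, consistent with the authors' remark that the verification is elementary. The only (minor) obstacle is the bookkeeping around the phantom edge $e_1$: one must verify that adding a single virtual edge whose two endpoints are both identified with the root simultaneously (i) absorbs the $\lfloor n(1-x)\rfloor\le 1$ boundary regime of the original dynamics, and (ii) converts the ``$1/(\lfloor n(1-x)\rfloor-1)$'' from the raw edge-copying identity applied to $\cT(m)$ into the ``$1/\lfloor n(1-x)\rfloor$'' appearing in~\eqref{eqn:pjss}.
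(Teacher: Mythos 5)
Your high-level plan (edge-copying identity $+$ induction over one-step kernels) is the right idea, and it does match what the authors almost certainly have in mind when they say the lemma is ``elementary to check.'' But the one step you flag as a ``minor'' bookkeeping obstacle --- item (ii), converting $1/(\lfloor n(1-x)\rfloor-1)$ into $1/\lfloor n(1-x)\rfloor$ by adjoining a phantom edge $e_1$ that ``acts as a self-loop at the root'' --- is precisely where the argument breaks, and it is not minor. A self-loop at the root contributes $2$ to the root's graph degree in the edge-copying identity, while the paper's convention $\deg(v_1,m)=\mathrm{in\text{-}deg}(v_1)+1$ calls for only a $+1$ boost. Concretely: take $n=2$, $\xi\equiv 0$, so $\cT(2)$ is the single edge $\{v_1,v_2\}$. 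The model of Definition~\ref{defn:model} attaches $v_3$ to $v_1$ with probability $\deg(v_1,2)/(\deg(v_1,2)+\deg(v_2,2))=2/3$. In your phantom-augmented picture, $\overline{\cT}(2)$ has edges $e_1$ (a self-loop at the root) and $e_2=\{v_1,v_2\}$; evaluating~\eqref{eqn:pjss} at $\xi\equiv 0$ gives $p^2_1=p^2_2=1/2$, so the copying dynamics attach to the root with probability $\tfrac12\cdot 1+\tfrac12\cdot\tfrac12=3/4\neq 2/3$. So ``adding a single virtual edge whose two endpoints are both identified with the root'' does \emph{not} yield the one-step kernel of $\cT$, and the inductive step you are relying on does not close.

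There is also a bookkeeping mismatch you have not addressed: $\overline{\cT}(n)$ as defined in Construction~\ref{constr-copy-delay} has $n+1$ vertices $v_0,\dots,v_n$ and $n$ edges, while $\cT(n)$ has $n$ vertices and $n-1$ edges, so a naive termwise identification of the two sequences cannot be correct without some relabeling/shift that your proof never pins down. Whichever convention one adopts (identify $v_0\equiv v_1$, or treat $v_0$ as a genuine vertex and shift indices by one), a careful one-step check against $p^n_j$ in~\eqref{eqn:pjss} must be carried out, and the above computation shows that the route you chose does not survive that check. To repair the proof you would need to (a) fix once and for all the intended correspondence between $\overline{\cT}(n)$ and $\cT(\cdot)$, (b) settle exactly which degree convention the snapshot attachment uses (graph degree at the root versus $\mathrm{in\text{-}deg}+1$), and then (c) re-derive the edge-selection pmf \emph{from} that convention --- rather than asserting that a phantom self-loop bridges the $\lfloor n(1-x)\rfloor$ versus $\lfloor n(1-x)\rfloor-1$ gap. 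As it stands, the proposal identifies the right tool but does not supply a correct proof.
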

In order to understand the rate of growth of the root degree, essentially one needs to carefully understand the rate of copying events that involve the edges attached to the root $v_0$.

{ Recall the discussion in Section~\ref{intuition} to analyze the local limit in terms of an associated continuous time process. To formalize this intuition at the level of the entire network (not just locally around a uniformly chosen vertex)}, we will move Construction~\ref{constr-copy-delay} into continuous time so that copying events happen at a rate proportional to the number of edges in the system. To do this, we first describe a continuous time analogue of the construction in $\set{\overline{\cT}(n):n\geq 1}$  in terms of time-changed Poisson processes. Let $\{N_i(\cdot) : i \in \mathbb{N}\}$ be {\it iid} Poisson rate one processes on $\mathbb{R}_+$. Consider the following evolution equations for $t \ge 0$:
\begin{align}\label{couplingdyn}
 \ve_i(t) &= N_i\left(\int_0^t \hat r_i(s)ds\right), \nonumber\\
 \hat r_i(t) &= \ind\set{i \le Y(t)}\int_0^{1 - {i}/{Y(t)}}\frac{Y(t)}{\lfloor Y(t)(1-x)\rfloor}dF_{\xi}(x) + \ind\set{i=1}\int_{1 - {1}/{Y(t)}}^1Y(t)dF_{\xi}(x),\nonumber\\
 Y(t) &= \sum_{i=1}^{\infty}\ve_i(t),\nonumber\\
\end{align}
where $F_\xi$ is the cdf of the delay variable $\xi$. Since for any time $t$, using the fact that the expression in~\eqref{eqn:pjss} is a probability mass function, it is easy to check that $\sum_{j} \hat r_j(t) = Y(t)$. Thus it is easy to verify that $Y$ is a rate one Yule process with $Y(0)=1$. The above dynamics can be used to describe a growing network process by thinking of $\ve_i(\cdot)$ as the reproduction process of the $i$-th added edge in continuous time as in Section~\ref{edgedesc}. More precisely, consider the continuous time network process $\{\TT_c(t) : t \ge 0\}$, started with $\TT_c(0) = \TT(0)$, where the $i$-th edge, added to the system at time $\tau_i := \inf\{t \ge 0: Y(t) = i\}$ reproduces, giving birth to new edges (equivalently, vertices) at the arrival times of $\{\ve_i(t) : t \ge \tau_i\}$. At each birth time, the new individual is connected to the parent vertex or child vertex of the $i$-th edge with equal chance via an outgoing edge. Lemma~\ref{lem:equiv} now implies that, 
\begin{equation}\label{disctocon}
\set{\TT_c(\tau_i) : i \ge 1} \stackrel{d}{=} \set{\TT(i) : i \ge 1}.
\end{equation}
Although the above recipe provides a continuous time description of the entire discrete time process (not just the local weak limit), the rates $\hat r_i$ appearing above have a substantially more complicated form than those for the edge branching process in Definition~\ref{def:bpmave}. In particular, the \emph{reproduction rates across different vertices are correlated} through their mutual dependence on $Y(\cdot)$. However, using the approximations $\lfloor Y(t)(1-x)\rfloor \approx Y(t)(1-x)$, $\log Y(t) \approx t$ and $\tau_i \approx \log i$, we observe that
\begin{align*}
\hat r_i(t) &\approx \ind\set{\tau_i \le t}\int_0^{\log (Y(t)/i)} e^sdF_{\eta}(s) + \ind\set{i=1}\int_{\log Y(t)}^\infty Y(t)dF_{\eta}(s)\\
&\approx \int_0^{t - \tau_i} e^sdF_{\eta}(s) + \ind\set{i=1}\int_{t}^\infty Y(t)dF_{\eta}(s)\\
&= r(t-\tau_i) + \ind\set{i=1}\int_{t}^\infty Y(t)dF_{\eta}(s),
\end{align*}
where we recall $r(t) := \int_0^t e^s dF_{\eta}(s)$ and $\eta = -\log(1-\xi)$. 
The right-hand side above looks exactly like the reproduction rates of the edge branching process in Definition~\ref{def:bpmave} with some adjustment for the root.

 In view of the above, a crucial technical ingredient is thus quantifying
 \[
 |\hat r_i(t) - r(t-\tau_i) - \ind\set{i=1}\int_{t}^\infty Y(t)dF_{\eta}(s)|
 \]
 as a function of $t,i$. 

The following lemma provides bounds on this quantity. 

In the remainder of this section, we will denote by $\mathbb{P}_i$ and $\mathbb{E}_i$ the conditional probability and expectation given $\mathcal{F}_{\tau_i} :=\{\TT_c(s) : s \le \tau_i\}$. We will write $\prob, \E$ for $\prob_1, \E_1$. Also, $C, C', C'', C_1,C_2,..$ will denote generic positive constants, not depending on $i,\mathcal{F}_{\tau_i}$, whose values might change between lines.

\begin{lemma}\label{ratecomp}
Assume $\eta$ has a density $f_\eta$ such that $\sup_{u \ge 0} e^u f_\eta(u) < \infty$, and there exists $\gamma>1$ such that $\E(e^{\gamma \eta}) < \infty$. Then, for $i \ge 1$ and $t \ge \tau_i$,
writing
\begin{equation*}
\mathcal{E}_i(t) := \Big| \hat r_i(t) - r(t-\tau_i) - \ind\set{i=1}\int_{t}^\infty Y(t)dF_{\eta}(s)\Big|,
\end{equation*}
we have
\begin{align*}
&\E_i\left[\mathcal{E}_i(t)\right] \\
&\quad\le  \left(C_1 i^{-\frac{\gamma-1}{2\gamma}} e^{-C_2 (t-\tau_i)} + C_1 e^{-C_2 i} e^{\frac{\gamma-1}{2\gamma}(t-\tau_i)}\right) \wedge \left(C_1e^{-(\gamma-1)(t-\tau_i)/2} + C_1 e^{-i(t-\tau_i)/2}\right),
\end{align*}
where the constants $C_1, C_2$ depend only on $\gamma$.
\end{lemma}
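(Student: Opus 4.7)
The plan is to rewrite $\hat r_i(t)$ via the change of variables $u = -\log(1-x)$, giving
\[
\hat r_i(t) = \int_0^{\log(Y(t)/i)}\frac{Y(t)}{\lfloor Y(t)e^{-u}\rfloor}\,dF_\eta(u) + \ind\set{i=1}\,Y(t)\,\prob(\eta > \log Y(t)),
\]
and to set $s := t-\tau_i$, $W_i := e^{-s}Y(t)/i$. The key probabilistic input is that, conditional on $\mathcal{F}_{\tau_i}$, $Y(\tau_i + \cdot)$ is a Yule process started at $i$, equal in distribution to a sum of $i$ iid rate-one Yule processes started at $1$. Consequently $\E_i W_i = 1$, $W_i \ge e^{-s}$ deterministically, and standard $L^p$ bounds (Rosenthal for $p\ge 2$, Cauchy--Schwarz for $p\in[1,2]$) applied to the iid decomposition yield $\E_i|W_i - 1|^p \le C_p\, i^{-p/2}$ for all $p \ge 1$.

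For $i \ge 2$ I would then decompose $\mathcal{E}_i(t) \le A_1 + A_2$, where $A_1$ is the integrand--approximation error $\int_0^{s\wedge(s+\log W_i)}\bigl|\tfrac{Y(t)}{\lfloor Y(t)e^{-u}\rfloor} - e^u\bigr|\,dF_\eta(u)$ on the common integration range, and $A_2 := \bigl|\int_s^{s+\log W_i} e^u\, dF_\eta(u)\bigr|$ is the endpoint-shift error. Since $Y(t)e^{-u} \ge i \ge 2$ on the common range, one has $\lfloor Y(t)e^{-u}\rfloor \ge Y(t)e^{-u}/2$, so the integrand of $A_1$ is at most $2e^{2u}/Y(t)$; combined with the density bound $f_\eta(u) \le ce^{-u}$ this gives $A_1 \le 2c/i$ deterministically on $\mathcal{F}_{\tau_i}$. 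The same density bound yields $A_2 \le c|\log W_i|$. The case $i=1$ requires separate treatment of the root-boundary term $Y(t)|\prob(\eta > \log Y(t)) - \prob(\eta > t)|$, which via the Markov tail estimate $\prob(\eta > u) \le Ce^{-\gamma u}$ (from $\E e^{\gamma\eta}<\infty$) is bounded by $C(W_1 + W_1^{-(\gamma-1)})\,e^{-(\gamma-1)t}$, with moment control on $W_1^{-(\gamma-1)}$ obtained by splitting at $\set{W_1 \ge 1/2}$ and using $W_1 \ge e^{-t}$ on the complement.

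The two alternative bounds in the Lemma arise from two distinct estimates of $\E_i|\log W_i|$ (and its $i=1$ counterpart). For the first (moderate-$s$) bound, I would split at a threshold event $\set{|W_i-1|\le \delta}$: on this event use $|\log W_i|\le 2|W_i-1|$; on its complement apply H\"older's inequality with conjugate exponents $\gamma$ and $\gamma/(\gamma-1)$, exploiting the deterministic bound $|\log W_i|\le s$ on $\set{W_i \le 1}$ (from $W_i \ge e^{-s}$) together with $\prob_i(|W_i-1|>\delta)\le Ci^{-1}\delta^{-2}$ from Chebyshev. Optimizing $\delta$ and combining with the $\eta$-tail estimate near the upper endpoint of integration produces the stated exponent $(\gamma-1)/(2\gamma)$, the growth factor $e^{(\gamma-1)s/(2\gamma)}$, and the compensating multiplier $e^{-C_2 s}$. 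For the second (large-$s$) bound, use the triangle inequality $|\hat r_i(t) - r(s)| \le |\hat r_i(t) - \E e^\eta| + |\E e^\eta - r(s)|$: the second term is at most $Ce^{-(\gamma-1)s}$ directly from $\E e^{\gamma\eta}<\infty$, and a parallel tail estimate on the first term, combined with the Chebyshev bound $\prob_i(W_i \le 1/2) \le C/i$ exponentiated through the $e^{-\gamma s}$ tail weight of $\eta$, yields the $e^{-is/2}$ contribution. The principal technical obstacle is the tension between the $i^{-1/2}$-scale concentration of $W_i$ around $1$ and the deterministic lower bound $W_i \ge e^{-s}$, which allows $|\log W_i|$ to grow as large as $s$ on events of probability $O(1/i)$; H\"older interpolation between these two scales is precisely what produces the exponent $(\gamma-1)/(2\gamma)$, while the $i=1$ boundary term requires separate care since $W_1$ does not concentrate as $s$ grows and all decay in $t$ must be inherited from the tail of $\eta$.
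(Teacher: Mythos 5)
Your decomposition into a floor--rounding error $A_1$, an endpoint--shift error $A_2$, and an $i=1$ boundary correction mirrors the paper's split into $\mathcal{E}^{(1)}_i,\mathcal{E}^{(2)}_i,\mathcal{E}^{(3)}$, and the normalization $W_i=e^{-s}Y(t)/i$ with $\gamma$-conjugate H\"older is also the right machinery. The gap is that the pointwise bounds $A_1\le 2c/i$ and $A_2\le c|\log W_i|$, which you take as starting points, are obtained by replacing $e^uf_\eta(u)$ by its global sup and therefore discard, at the very first step, the decay of $e^uf_\eta(u)$ near $u\approx s$ that comes from $\E(e^{\gamma\eta})<\infty$. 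Once that information is gone it cannot be reinjected: you would still like to ``combine with the $\eta$-tail estimate near the upper endpoint,'' but after $A_1\le 2c/i$ there is no endpoint left. The failure is concrete: fix $i\ge 2$ and let $s=t-\tau_i\to\infty$; the second alternative in the lemma tends to $0$, but your deterministic lower bound $A_1\le 2c/i$ does not produce any decay in $s$, and indeed the actual floor error is $\Theta(1/i)$ only because the tail $\int_{s/2}^{\infty}e^uf_\eta(u)\,du\le Ce^{-(\gamma-1)s/2}$ suppresses the contribution from $u$ near $\log(Y(t)/i)\approx s$. The paper captures this by (i) splitting the $\mathcal{E}^{(1)}$ integral at $Y(t)(1-x)=ie^{(t-\tau_i)/2}$, so the bulk has a large denominator and the remainder carries $\eta$-tail weight, and (ii) applying H\"older \emph{inside} the $\mathcal{E}^{(2)}$ integral before taking $\E_i$, which retains $\bigl(\int_{a\wedge b}^\infty e^{\gamma u}f_\eta^\gamma(u)\,du\bigr)^{1/\gamma}$ and hence a factor decaying in $s$. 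The exponent $(\gamma-1)/(2\gamma)$ in the first alternative is precisely the product of the inner H\"older exponent $(\gamma-1)/\gamma$ with the $i^{-1/2}$ concentration scale of $W_i$; optimizing $\delta$ over $\E_i|\log W_i|$ alone cannot reproduce it because the relevant quantity is the H\"older-interpolated product, not $|\log W_i|$ in isolation.

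A second, independent gap: the $e^{-i(t-\tau_i)/2}$ term in the second alternative cannot come from the Chebyshev estimate $\prob_i(W_i\le 1/2)\le C/i$. Chebyshev gives a tail polynomial in $i$; ``exponentiating through the $\eta$-tail weight'' does not turn a $1/i$ bound into an $e^{-ci}$ bound, since $\E(e^{\gamma\eta})<\infty$ only supplies decay in $s$, not in $i$. To get exponential-in-$i$ decay you need the Chernoff/MGF bound for the negative-binomial law of $Y(t)$ under $\prob_i$, namely $\prob_i\bigl(Y(t)\le ie^{(t-\tau_i)/2}\bigr)\le e^{-i(t-\tau_i)/2}$, which is exactly the content of the paper's Lemma~\ref{Yuleest}(iv). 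Your proposal replaces this by a strictly weaker estimate, so the stated bound would not follow.
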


Note that the first bound in the lemma has a better dependence on $i$ and a worse dependence on $t$. The second bound has the reverse nature. As we will see later, the first bound will play a crucial role in the upper bound for the root degree while the second bound is required for the root degree lower bound.

In order to prove the above lemma, we need some estimates about the Yule process recorded in the following lemma. Proof is given in Appendix~\ref{app}.

\begin{lemma}\label{Yuleest}
Let $Y$ be a rate one Yule process with $Y(0)=1$ and let $i \ge 1$. Then,
\begin{enumeratei}
\item For $t \ge 0$,
\[
Y(t) \sim \operatorname{Geom}(e^{-t}).
\]
Moreover,  $\{e^{-(t-\tau_i)}Y(t) : t \ge \tau_i\}$ is a $\prob_i$-martingale and
\[
e^{-(t-\tau_i)} Y(t) \stackrel{\prob_i-a.s., L^2}{\longrightarrow} W_i
\]
as $t \to \infty$, where $W_i \stackrel{d}{=} \sum_{l=1}^i E_l$ and $\{E_l\}$ are iid $\exp(1)$ random variables. Further, 
$$
\E_i\left|Y(t) - e^{t-\tau_i}W_i\right|^2 \le Ci e^{t- \tau_i}, \ t \ge \tau_i.
$$
\item 
$
\prob_i\left(W_i < i x\right) \le \left(e x e^{-x}\right)^i, \ x \in [0, 1).
$
\item $\E_i \left| \log \frac{W_i}{i} \right| \le \frac{C}{\sqrt{i}}$.
\item For any $\beta>0$, $t \ge \tau_i$,
\begin{align*}
\E_i\left[\left(\frac{Y(t)}{i}\right)^{-\beta} \wedge 1\right] &\le e^{-\beta(t-\tau_i)/2} + e^{-i(t-\tau_i)/2}.
\end{align*}
\end{enumeratei}
\end{lemma}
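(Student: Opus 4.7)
For part (i), the plan is to exploit the branching decomposition of the Yule process under $\prob_i$: conditionally on $\mathcal{F}_{\tau_i}$, one has $Y(\tau_i + s) = \sum_{l=1}^i Y^{(l)}(s)$ where $\{Y^{(l)}\}$ are iid Yule processes started from $1$. The classical generating-function computation for the pure-birth chain with rate $n$ from state $n$ gives $Y^{(l)}(s) \sim \mathrm{Geom}(e^{-s})$, and summing $i$ iid geometrics yields $Y(t)\mid\mathcal{F}_{\tau_i} \sim \mathrm{NegBin}(i, e^{-s})$. That $M^{(l)}(s) := e^{-s}Y^{(l)}(s)$ is an $L^2$-bounded martingale converging to $E_l \sim \mathrm{Exp}(1)$ is standard, and summation gives $e^{-s}Y(t) \to W_i := \sum_{l=1}^i E_l$ in $L^2$. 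For the quantitative bound, observe that $L^2$ convergence together with the martingale property forces $M^{(l)}(s) = \E[E_l \mid \mathcal{F}_s^{(l)}]$, whence
\[
\E\bigl(M^{(l)}(s) - E_l\bigr)^2 \;=\; \E(E_l^2) - \E\bigl(M^{(l)}(s)^2\bigr) \;=\; e^{-s},
\]
using $\mathrm{Var}(\mathrm{Geom}(e^{-s})) = (1-e^{-s})e^{2s}$ for the second moment. Independence across $l$ then yields $\E_i|Y(t) - e^s W_i|^2 = e^{2s}\cdot i \cdot e^{-s} = ie^s$.

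Part (ii) is the standard Chernoff lower tail for $W_i \sim \Gamma(i,1)$: for any $\theta>0$,
\[
\prob(W_i < ix) \;\le\; e^{\theta i x}\,\E[e^{-\theta W_i}] \;=\; \Bigl(\tfrac{e^{\theta x}}{1+\theta}\Bigr)^{\!i},
\]
optimized at $\theta = (1-x)/x$ to give $(xe^{1-x})^i = (exe^{-x})^i$. Part (iii) is handled by splitting at $\{W_i \ge i/2\}$: on this event, the local Lipschitz bound $|\log y| \le C|y-1|$ (valid for $y \ge 1/2$) combined with $\E|W_i/i - 1| \le i^{-1/2}$ yields a $C/\sqrt{i}$ contribution; on the complement, Cauchy--Schwarz together with the exponentially small probability $\prob(W_i < i/2) \le (e^{1/2}/2)^i$ from (ii) and a polynomial bound $\E(\log(W_i/i))^2 \le C(\log i)^2$ (itself obtained from $W_i \ge E_1$ so that $|\log W_i|\ind\{W_i < 1\} \le |\log E_1| \in L^2$) yields an $o(1/\sqrt{i})$ contribution.

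For part (iv), the strategy is a threshold split at $\{Y(t) \ge ie^{\alpha s}\}$ with $s := t-\tau_i$ and $\alpha\in(0,1/2)$. On the high event, $(Y(t)/i)^{-\beta} \le e^{-\alpha\beta s}$ directly. On the low event, writing $T_k := \sum_{j=i}^{k-1} E_j/j$ for the Yule hitting time of $k$ from $i$, one has $\prob_i(Y(t) < ie^{\alpha s}) = \prob(T_{\lceil ie^{\alpha s}\rceil} > s)$, and a Chernoff bound with $\E e^{\theta T_k} = \prod_{j=i}^{k-1}(1-\theta/j)^{-1} \le e^{2\theta\alpha s}$ (valid for $\theta\le i/2$ by $-\log(1-x)\le 2x$ on $[0,1/2]$) at $\theta = i/2$ yields $\prob_i(Y(t)<ie^{\alpha s}) \le e^{-is(1/2-\alpha)}$. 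The main obstacle is matching the exact constants $\beta/2$ and $i/2$ stated in the lemma: the approach above produces exponents $\alpha\beta s$ and $(1/2-\alpha)is$, which cannot simultaneously equal $\beta s/2$ and $is/2$. Sharpening to match likely requires a direct Chernoff computation for the negative binomial law using the explicit Kullback--Leibler divergence $\mathrm{KL}(e^{-s/2} \,\|\, e^{-s})$ (whose asymptotic $e^{-s/2}(s/2-1)$, combined with $n \approx ie^{s/2}$, yields the target rate $is/2$ for large $s$), patched with the trivial bound $(Y(t)/i)^{-\beta} \le 1$ in the small-$s$ regime where the stated upper bound is itself $\ge 1$.
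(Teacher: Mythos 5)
Your parts (i) and (ii) follow essentially the same route as the paper. For (i) the paper simply cites Norris, but your branching decomposition $Y(\tau_i+s)=\sum_{l=1}^i Y^{(l)}(s)$ with iid Yule-from-$1$ summands, together with the $L^2$ identity $\E(M^{(l)}(s)-E_l)^2 = \E(E_l^2)-\E(M^{(l)}(s)^2) = e^{-s}$ and decoupling across $l$, reproduces the stated second-moment bound cleanly (with $C=1$). Part (ii) is the identical MGF/Chernoff calculation.

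Part (iii) is a genuinely different argument. The paper substitutes $z=ie^y$, reducing $\E_i|\log(W_i/i)|$ to the Laplace-type integral $\tfrac{i^i}{\Gamma(i)}\int_{-\infty}^\infty |y|\,e^{i(y-e^y)}\,dy$, and then uses the pointwise expansions $e^{y}\ge 1+y+y^2/2$ and $e^{-y}\ge 1-y+y^2/2-y^3/6$ on the two half-lines together with Stirling to get $C/\sqrt{i}$. Your route --- split at $\{W_i\ge i/2\}$, apply the local Lipschitz bound $|\log y|\le C|y-1|$ with $\E|W_i/i - 1|\le i^{-1/2}$ on the good event, and Cauchy--Schwarz with the exponentially small $\prob(W_i<i/2)$ on the bad event --- is more conceptual and avoids any explicit Laplace analysis. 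One small slip: the bound you state, $\E(\log(W_i/i))^2\le C(\log i)^2$, is not what your sketch gives. Controlling $\{W_i<1\}$ via $W_i\ge E_1$ is fine, but on $\{W_i\ge 1\}$ the cheap bound $(\log W_i)^2\le W_i$ (or $W_i^2$) only yields $\E(\log(W_i/i))^2 = O(i)$ (or $O(i^2)$). That is harmless, since $\sqrt{\prob(W_i<i/2)}$ decays geometrically in $i$ and crushes any polynomial, but you should state the polynomial order you can actually prove.

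For part (iv) your instinct is right and in fact exposes a real imprecision in the paper. The paper's proof works directly with the negative binomial law of $Y(t)$: it computes the optimized Chernoff bound
\[
\prob_i(Y(t)\le ix)\;\le\; p^i x^{ix}\Bigl(\tfrac{1-p}{x-1}\Bigr)^{i(x-1)},\qquad p=e^{-(t-\tau_i)},\ x\in[1,1/p),
\]
and then asserts this is $\le(px)^i$ before plugging in $x=1/\sqrt p$. But that last inequality is in the wrong direction: dividing by $(px)^i$ leaves $\bigl(\tfrac{x(1-p)}{x-1}\bigr)^{i(x-1)}$, and on $(1,1/p)$ one has $\tfrac{x(1-p)}{x-1}>1$, so the optimized Chernoff bound is \emph{larger} than $(px)^i$; at $x=1/\sqrt p$ the discrepancy factor is $(1+\sqrt p)^{i(1-\sqrt p)/\sqrt p}$, which tends to $e^i$ as $p\to 0$. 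So the best one gets from Chernoff is $\prob_i(Y(t)\le ie^{(t-\tau_i)/2})\le e^i\,e^{-i(t-\tau_i)/2}$, and the exact constants $\beta/2,\,i/2$ in the lemma statement cannot be certified by this argument. Your difficulty in matching those constants is therefore not a flaw in your approach. Crucially, the lemma is used only in the second bound of Lemma~\ref{ratecomp}, where any estimate of the form $e^{-c_1\beta(t-\tau_i)}+e^{-c_2 i(t-\tau_i)}$ with fixed $c_1,c_2>0$ would serve; your threshold at $ie^{\alpha(t-\tau_i)}$ for some $\alpha\in(0,1/2)$ with the hitting-time Chernoff bound already gives exactly that. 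There is no need to chase the stated constants, and the ``KL-divergence plus small-$s$ patch'' repair you outline, while workable, is unnecessary effort.
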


\begin{proof}[Proof of Lemma~\ref{ratecomp}]
{ Define the `error terms':
\begin{align*}
\mathcal{E}^{(1)}_i(t) &:= \int_0^{1 - \frac{i}{Y(t)}}\Big|\frac{Y(t)}{\lfloor Y(t)(1-x)\rfloor} - \frac{1}{1-x}\Big|dF_{\xi}(x),\\
 \mathcal{E}^{(2)}_i(t) &:= \Big|\int_0^{\log \left(Y(t)/i\right)} e^s dF_{\eta}(s) -  \int_0^{t-\tau_i} e^s dF_{\eta}(s)\Big|,\\
 \mathcal{E}^{(3)}(t) &:= \Big| \int_{\log Y(t)}^{\infty}Y(t)dF_\eta(s) - \int_{t}^\infty Y(t)dF_{\eta}(s)\Big|,  \ \ i \ge 1, t \ge \tau_i.
\end{align*}
Observe that, for $t \ge \tau_i$,
\begin{align}\label{r1}
   &\Big| \hat r_i(t) - r(t-\tau_i) - \ind\set{i=1}\int_{t}^\infty Y(t)dF_{\eta}(s)\Big|\notag\\
   & \le  \int_0^{1 - \frac{i}{Y(t)}}\Big|\frac{Y(t)}{\lfloor Y(t)(1-x)\rfloor} - \frac{1}{1-x}\Big|dF_{\xi}(x) + \Big|\int_0^{1 - \frac{i}{Y(t)}}\frac{1}{1-x}dF_{\xi}(x) - \int_0^{t-\tau_i} e^s dF_{\eta}(s)\Big|\notag\\
   &\qquad + \ind\set{i=1}\Big|\int_{1-\frac{1}{Y(t)}}^1Y(t)dF_{\xi}(x) - \int_{t}^\infty Y(t)dF_{\eta}(s)\Big|\notag\\
   &= \mathcal{E}^{(1)}_i(t) + \mathcal{E}^{(2)}_i(t) + \ind\set{i=1}\mathcal{E}^{(3)}(t),
\end{align}
where we used the transformation $\eta = -\log(1-\xi)$ in noticing $\int_0^{1 - \frac{i}{Y(t)}}\frac{1}{1-x}dF_{\xi}(x) = \int_0^{\log \left(Y(t)/i\right)} e^s dF_{\eta}(s)$ and $\int_{1-\frac{1}{Y(t)}}^1Y(t)dF_{\xi}(x) = \int_{\log Y(t)}^{\infty}Y(t)dF_\eta(s)$.}

Now we will estimate the three errors above. Note that
\begin{align}\label{r2}
&\int_0^{1 - \frac{i}{Y(t)}}\Big|\frac{Y(t)}{\lfloor Y(t)(1-x)\rfloor} - \frac{1}{1-x}\Big|dF_{\xi}(x) \le \int_0^{1 - \frac{i}{Y(t)}}\frac{1}{(1-x)\lfloor Y(t)(1-x)\rfloor}dF_{\xi}(x)\nonumber\\
&\quad = \int_0^{1 - \frac{ie^{(t-\tau_i)/2}}{Y(t)}}\frac{1}{(1-x)\lfloor Y(t)(1-x)\rfloor}dF_{\xi}(x) + \int_{1 - \frac{ie^{(t-\tau_i)/2}}{Y(t)}}^{1 - \frac{i}{Y(t)}}\frac{1}{(1-x)\lfloor Y(t)(1-x)\rfloor}dF_{\xi}(x)\nonumber\\
&\quad \le \frac{2}{ie^{(t-\tau_i)/2}}\int_0^{\infty}e^udF_\eta(u) + \frac{1}{i}\int_{1 - \frac{ie^{(t-\tau_i)/2}}{Y(t)}}^{1}\frac{1}{(1-x)}dF_{\xi}(x)\nonumber\\
&\quad \le  \frac{2}{i}e^{-(t-\tau_i)/2}\int_0^{\infty}e^udF_\eta(u) + \frac{1}{i}\left(\frac{ie^{(t-\tau_i)/2}}{Y(t)} \wedge 1\right)^{\gamma - 1}\int_{0}^\infty e^{\gamma s}dF_{\eta}(s),
\end{align}
{ where the second inequality follows upon noting that $\lfloor Y(t)(1-x)\rfloor \ge \lfloor i e^{(t-\tau_i)/2}\rfloor \ge \frac{i}{2} e^{(t-\tau_i)/2}$ when $x \le 1 - \frac{ie^{(t-\tau_i)/2}}{Y(t)}$ in the first integral, and $\lfloor Y(t)(1-x)\rfloor \ge i$ when $x \le 1 - \frac{i}{Y(t)}$ for the second integral.}

To estimate the second term in the above bound, first note that
\begin{align*}
\prob_i\left(\frac{Y(t)}{i} < e^{3(t-\tau_i)/4}\right) \le \prob_i\left(\left|Y(t) - e^{t-\tau_i} W_i\right| > i e^{3(t-\tau_i)/4}\right) + \prob_i\left(W_i < 2i e^{-(t-\tau_i)/4}\right).
\end{align*}
{  By Lemma~\ref{Yuleest}(i) and Chebyshev's inequality, we have}
\begin{align*}
\prob_i\left(\left|Y(t) - e^{t-\tau_i} W_i\right| > i e^{3(t-\tau_i)/4}\right) \le \frac{Ci e^{t- \tau_i}}{i^2 e^{3(t-\tau_i)/2}} = \frac{C}{i} e^{-(t-\tau_i)/2}.
\end{align*}
Moreover, by Lemma~\ref{Yuleest}(ii), there exists $t_0>0$ such that for all $t \ge \tau_i + t_0$,
\begin{align*}
\prob_i\left(W_i < 2i e^{-(t-\tau_i)/4}\right) \le \left(2e^{1-(t-\tau_i)/4}\right)^i \le e^{-i(t-\tau_i)/8}.
\end{align*}
Using these estimates, we obtain
$$
\prob_i\left(\frac{Y(t)}{i} < e^{3(t-\tau_i)/4}\right) \le \frac{C}{i} e^{-(t-\tau_i)/2} + C e^{-(t-\tau_i)/8}.
$$
Hence,
\begin{align*}
\E_i\left[\left(\frac{ie^{(t-\tau_i)/2}}{Y(t)} \wedge 1\right)^{\gamma - 1}\right] &\le e^{-(\gamma-1)(t - \tau_i)/4} + \prob_i\left(\frac{Y(t)}{i} < e^{3(t-\tau_i)/4}\right)\\
&\le  e^{-(\gamma-1)(t - \tau_i)/4} + \frac{C}{i} e^{-(t-\tau_i)/2} + C e^{-(t-\tau_i)/8}.
\end{align*}
Using this in~\eqref{r2}, we obtain
\begin{equation}\label{rem1}
\E_i\left[\mathcal{E}^{(1)}_i(t)\right] \le \frac{C}{i}e^{-(t-\tau_i)/2} + \frac{1}{i}e^{-(\gamma-1)(t - \tau_i)/4} + \frac{C}{i^2} e^{-(t-\tau_i)/2} + \frac{C}{i} e^{-(t-\tau_i)/8}.
\end{equation}
To estimate the second error term in~\eqref{r1}, note that
\begin{align}\label{r3}
\begin{split}
& \Big|\int_0^{\log \left(Y(t)/i\right)} e^s dF_{\eta}(s) -  \int_0^{t-\tau_i} e^s dF_{\eta}(s)\Big| \\
&\qquad  \le \Big|\int_0^{\log \left(Y(t)/i\right)} e^s dF_{\eta}(s) -  \int_0^{t-\tau_i + \log\frac{W_i}{i} } e^s dF_{\eta}(s)\Big|\\
&\qquad \qquad \qquad  + \Big|\int_0^{t-\tau_i + \log\frac{W_i}{i} } e^s dF_{\eta}(s) -  \int_0^{t-\tau_i} e^s dF_{\eta}(s)\Big|.
 \end{split}
\end{align}
By an application of H\"older's inequality,
\begin{align}\label{rem20}
&\E_i\Big|\int_0^{\log \left(Y(t)/i\right)} e^s dF_{\eta}(s) -  \int_0^{t-\tau_i+ \log\frac{W_i}{i}} e^s dF_{\eta}(s)\Big|\nonumber\\
&\le \left(\E_i\Big|\log \left(\frac{Y(t)}{i}\right) - \left(t-\tau_i - \log\frac{W_i}{i}\right)^+  \Big|\right)^{\frac{\gamma-1}{\gamma}}\left(\E_i\int_{\log \left(Y(t)/i\right) \wedge \left(t-\tau_i + \log\frac{W_i}{i}\right)}^{\infty}e^{\gamma s}f_\eta^\gamma(s)ds\right)^{\frac{1}{\gamma}}\nonumber\\
&\le C \left(\E_i\Big|\log \left(\frac{Y(t)}{i}\right) - \left(t-\tau_i - \log\frac{W_i}{i} \right)^+ \Big|\right)^{\frac{\gamma-1}{\gamma}}\left(\E_i\int_{\log \left(Y(t)/i\right) \wedge \left(t-\tau_i + \log\frac{W_i}{i}\right)}^{\infty}e^{s}f_\eta(s)ds\right)^{\frac{1}{\gamma}}\nonumber\\
&\le C_1 \left(\E_i\Big|\log \left(\frac{Y(t)}{i}\right) - \left(t-\tau_i - \log\frac{W_i}{i}\right)^+  \Big|\right)^{\frac{\gamma-1}{\gamma}}\nonumber\\
&\qquad \qquad \times \left(\E_i\left[C_2 \wedge \left(\left(\frac{Y(t)}{i}\right) \wedge \left(e^{t-\tau_i} \frac{W_i}{i}\right)\right)^{-(\gamma-1)}\right]\right)^{\frac{1}{\gamma}},
\end{align}
where we have used $\sup _{u \ge 0}e^uf_{\eta}(u) < \infty$ in the second inequality and $\E(e^{\gamma \eta}) < \infty$ in the third inequality. 
As $|\log x - \log y| \le (x \wedge y)^{-1} |x-y|$ for all $x, y \ge 1$,
\begin{align}\label{r4}
&\E_i\Big|\log \left(\frac{Y(t)}{i}\right) - \left(t-\tau_i - \log\frac{W_i}{i}\right)^+  \Big|\nonumber\\
& \qquad \le \E_i\left[\left(\left(\frac{Y(t)}{i} \wedge e^{(t - \tau_i)}\frac{W_i}{i}\right)^{-1} \wedge 1\right) \, \left|\frac{Y(t)}{i} -  e^{(t - \tau_i)}\frac{W_i}{i}\right|\right]\nonumber\\
&\qquad \le  \left(\E_i\left[\left(\frac{Y(t)}{i} \wedge e^{(t - \tau_i)}\frac{W_i}{i}\right)^{-2} \wedge 1\right]\right)^{1/2} \left(\E_i\left|\frac{Y(t)}{i} -  e^{(t - \tau_i)}\frac{W_i}{i}\right|^2\right)^{1/2}.
\end{align}
Now, using Lemma~\ref{Yuleest}(i),
$$
\E_i\left|\frac{Y(t)}{i} -  e^{(t - \tau_i)}\frac{W_i}{i}\right|^2 \le \frac{C}{i}e^{t-\tau_i}.
$$
Moreover, to estimate the first term in the bound~\eqref{r4}, note that by Lemma~\ref{Yuleest}(i) and (ii),
\begin{align*}
\prob_i\left(Y(t) < \frac{1}{2}e^{(t - \tau_i)}W_i \right) &\le \prob_i\left(Y(t) - e^{(t - \tau_i)}W_i  \le - \frac{1}{4}i e^{(t-\tau_i)}\right) + \prob_i\left(W_i < \frac{i}{2}\right)\\
& \le \frac{C}{i}e^{-(t-\tau_i)} + e^{-C'i}.
\end{align*}
Using this fact, we obtain
\begin{align*}
&\E_i\left[\left(\frac{Y(t)}{i} \wedge e^{(t - \tau_i)}\frac{W_i}{i}\right)^{-2} \wedge 1\right] \\
&\le \prob_i\left(Y(t) < \frac{1}{2}e^{(t - \tau_i)}W_i \right)  + \E_i\left[\left(e^{(t - \tau_i)}\frac{W_i}{2i}\right)^{-2} \wedge 1\right]\\
& \le \frac{C}{i}e^{-(t-\tau_i)} + e^{-C'i} + \prob_i\left(W_i < \frac{i}{2}\right) + 16 e^{-2(t-\tau_i)}\\
& \le \frac{C}{i}e^{-(t-\tau_i)} + 2 e^{-C' i} + 16 e^{-2(t-\tau_i)}.
\end{align*}
Using the above estimates in~\eqref{r4}, we get
\begin{equation}\label{r5}
\E_i\Big|\log \left(\frac{Y(t)}{i}\right) - \left(t-\tau_i - \log\frac{W_i}{i}\right)^+  \Big| \le \frac{C}{i} + C e^{-C' i}e^{(t-\tau_i)/2} + \frac{C}{\sqrt{i}}e^{-(t - \tau_i)/2}.
\end{equation}
The second term in~\eqref{rem20} can be estimated as in the first term in~\eqref{r4} to obtain
\begin{align}\label{r6}
\E_i\left[C_2 \wedge \left(\left(\frac{Y(t)}{i}\right) \wedge \left(e^{t-\tau_i} \frac{W_i}{i}\right)\right)^{-(\gamma-1)}\right] \le \frac{C}{i}e^{-(t-\tau_i)} + 2 e^{-C' i} + 16 e^{-(\gamma-1)(t-\tau_i)}.
\end{align}
Using~\eqref{r5} and~\eqref{r6} in~\eqref{rem20}, we obtain
\begin{align}\label{rem21}
\E_i\Big|\int_0^{\log \left(Y(t)/i\right)} e^s dF_{\eta}(s) -  \int_0^{t-\tau_i+ \log\frac{W_i}{i}} e^s dF_{\eta}(s)\Big|
\le \frac{C}{i^{\frac{\gamma-1}{2\gamma}}}e^{-C'(t-\tau_i)} + Ce^{-C'i}e^{\left(\frac{\gamma-1}{2\gamma}\right)(t- \tau_i)}.
\end{align}
Similarly, using Lemma~\ref{Yuleest}(ii) and (iii),
\begin{align}\label{rem22}
&\E_i\Big|\int_0^{t-\tau_i + \log\frac{W_i}{i} } e^s dF_{\eta}(s) -  \int_0^{t-\tau_i} e^s dF_{\eta}(s)\Big|\nonumber\\
&\qquad \le C_1\left(\E_i\left|\log \frac{W_i}{i}\right|\right)^{\frac{\gamma-1}{\gamma}}\left(\E_i\left[C_2 \wedge \left(e^{(t-\tau_i)} \wedge e^{(t-\tau_i)} \frac{W_i}{i}\right)^{-(\gamma - 1)}\right]\right)^{\frac{1}{\gamma}}\nonumber\\
&\qquad \le \frac{C}{i^{\frac{\gamma-1}{2\gamma}}}\left(C e^{-C' i} + C e^{-\left(\frac{\gamma-1}{\gamma}\right)(t- \tau_i)}\right).
\end{align}
Hence, using~\eqref{rem21} and~\eqref{rem22} in~\eqref{r3}, we obtain
\begin{equation}\label{rem2}
\E_i\left[ \mathcal{E}^{(2)}_i(t)\right] \le \frac{C}{i^{\frac{\gamma-1}{2\gamma}}}e^{-C'(t-\tau_i)} + Ce^{-C'i}e^{\left(\frac{\gamma-1}{2\gamma}\right)(t- \tau_i)}.
\end{equation}
Finally,
\begin{align}\label{rem3}
\E\left[\mathcal{E}^{(3)}(t)\right] &\le \E\left[Y(t)\int_{\log Y(t) \wedge t}^\infty dF_\eta(s)\right] \le C\E\left[Y(t)(Y(t)^{-\gamma} \vee e^{-\gamma t})\right]\nonumber\\
&\le C\E\left[Y(t)^{-\gamma+ 1} + e^{-\gamma t} Y(t)\right] \le C \prob(Y(t) \le e^{t/2}) + e^{-(\gamma - 1)t/2} + e^{-(\gamma-1)t}\nonumber\\
& \le e^{-t/2} + e^{-(\gamma - 1)t/2} + e^{-(\gamma-1)t},
\end{align}
where we have used $\E(e^{\gamma \eta}) < \infty$ and the fact that $Y(t) \sim \operatorname{Geom}(e^{-t})$.
The first bound on $\E_i[\mathcal{E}_i(t)]$ in the lemma now follows upon using the estimates~\eqref{rem1},~\eqref{rem2} and~\eqref{rem3} in~\eqref{r1}.

To obtain the second bound, we will change the bound on $\mathcal{E}^{(2)}_i(t)$ by simply writing
\begin{equation*}
    \mathcal{E}^{(2)}_i(t) \le C\left[\left(\frac{Y(t)}{i} \wedge e^{t- \tau_i}\right)^{-(\gamma-1)} \wedge 1\right] \le C\left[\left(\frac{Y(t)}{i}\right)^{-(\gamma-1)} \wedge 1\right] + Ce^{-(\gamma-1)(t-\tau_i)}.
\end{equation*}
Hence, by Lemma~\ref{Yuleest}(iv), we obtain
\begin{equation*}
\E_i\left[\mathcal{E}^{(2)}_i(t)\right] \le 2Ce^{-(\gamma-1)(t-\tau_i)/2} + C e^{-i(t-\tau_i)/2},
\end{equation*}
which gives the second bound.
\end{proof}

\noindent\textbf{Completing the proof of upper bound in Theorem~\ref{thm:macro-max-deg}:} For $t \ge 0$, let $\rho(t)$ denote the degree of the root in $\TT_c(t)$ at time $t$ and let $$S_{\rho}(t) := \{i \le Y(t) : \text{$i$-th added vertex attaches to root of } \TT_c(t)\}.$$ Then, writing $m(t) := \E[\rho(t)]$, observe that
\begin{align*}
m'(t) = \frac{1}{2}\E\left[\sum_{i \in S_{\rho}(t)}\hat r_i(t)\right] \le \frac{1}{2}\E\left[\sum_{i \in S_{\rho}(t)}r(t-\tau_i) + \int_{t}^\infty Y(t)dF_{\eta}(s)\right] + \frac{1}{2}\E\left[\sum_{i \in S_{\rho}(t)}\mathcal{E}_i(t)\right].
\end{align*}
By Lemma~\ref{ratecomp},
\begin{align*}
&\int_0^t\E\left[\sum_{i \in S_{\rho}(s)}\mathcal{E}_i(s)\right] ds \le \E\left[\sum_{i \in S_{\rho}(t)}\int_{\tau_i}^tC_1\left(i^{-\frac{\gamma-1}{2\gamma}}e^{-C_2 (s-\tau_i)}  + e^{-C_2 i}e^{\frac{\gamma-1}{2\gamma}(s-\tau_i)}\right)ds\right]\\
&\le C\E\left[\rho(t)^{\frac{1}{2} + \frac{1}{2\gamma}}\right] + C e^{\frac{\gamma-1}{2\gamma}t}
\le C\E\left[Y(t)^{\frac{1}{2} + \frac{1}{2\gamma}}\right] + C e^{\frac{\gamma-1}{2\gamma}t}\\
&\le C\left(\E(Y(t))\right)^{\frac{1}{2} + \frac{1}{2\gamma}} + C e^{\frac{\gamma-1}{2\gamma}t} = Ce^{\frac{t}{2} + \frac{t}{2\gamma}} + C e^{\frac{\gamma-1}{2\gamma}t}.
\end{align*}
Moreover, writing $\bar F_\eta(t):= \int_{t}^\infty dF_{\eta}(s)$,
\begin{align*}
\E\left[\sum_{i \in S_{\rho}(t)}r(t-\tau_i) + \int_{t}^\infty Y(t)dF_{\eta}(s)\right] = \E\left[\int_{[0,t]}r(t-u)\rho(du)\right] + e^{t}\bar F_\eta(t)
\end{align*}
and, using integration by parts,
\begin{align*}
\int_0^t\E\left[\int_{[0,s]}r(s-u)\rho(du)\right] ds = \int_0^t\int_0^sm(s-u) r'(u)duds = \int_0^tm(t-s)r(s)ds.
\end{align*}
Combining the above observations, we obtain
\begin{align}\label{ren1}
m(t) &\le 1 + \frac{1}{2}\int_0^tm(t-s)r(s)ds + \frac{1}{2}\int_0^te^{s}\bar F_\eta(s)ds + \frac{1}{2}\int_0^t\left(Ce^{\frac{s}{2} + \frac{s}{2\gamma}} + C e^{\frac{\gamma-1}{2\gamma}s}\right)ds\nonumber\\
&\le 1 + \frac{1}{2}\int_0^tm(t-s)r(s)ds + \frac{1}{2}\int_0^te^{s}\bar F_\eta(s)ds + C' e^{\frac{t}{2} + \frac{t}{2\gamma}} + C' e^{\frac{\gamma-1}{2\gamma}t}\nonumber\\
&= g(t) +  \frac{1}{2}\int_0^tm(t-s)r(s)ds,
\end{align}
where $g(t) := 1 + \frac{1}{2}\int_0^te^{s}\bar F_\eta(s)ds + C' e^{\frac{t}{2} + \frac{t}{2\gamma}} + C' e^{\frac{\gamma-1}{2\gamma}t}$. Now, define the measure $\nu(ds) := \frac{1}{2}e^{-\lambda s} r(s) ds$ on $\mathbb{R}_+$. By the definition of $\lambda$, $\nu$ is a probability measure. Hence, from~\eqref{ren1}, we obtain the following `renewal theoretic' upper bound
\begin{equation*}
e^{-\lambda t} m(t) \le \int_0^t e^{-\lambda (t-s)} g(t-s) R(ds)
\end{equation*}
where $R(ds) := \delta_{0} + \sum_{l=1}^{\infty}\nu^{*l}(ds)$ is the renewal measure associated with $\nu$. Once again, using the definition of $\lambda$, it can be checked that
$$
\int_0^{\infty}e^{-\lambda s}\int_0^se^{u}\bar F_\eta(u)duds = \frac{2\lambda - 1}{\lambda(1- \lambda)} \in (0,\infty).
$$
Moreover, recalling $\lambda \in (1/2, 1)$ we can choose and fix $\gamma>1$ large enough such that $\frac{1}{2} + \frac{1}{2\gamma} < \lambda$. This is allowed as $\E(e^{\gamma \eta})< \infty$ by the assumed hypothesis of the Theorem. For this choice of $\gamma$,
$$
\int_0^{\infty}e^{-\lambda s}\left(e^{\frac{s}{2} + \frac{s}{2\gamma}} + e^{\frac{\gamma-1}{2\gamma}s}\right)ds < \infty.
$$
Further, by the assumption on finiteness of exponential moments, $\int_0^{\infty}s e^{-\lambda s} r(s) ds < \infty$.
Hence, by the key renewal theorem~\cite{jagers-ctbp-book}, we obtain
\begin{equation}\label{mbdct}
m^* := \limsup_{t \rightarrow \infty} e^{-\lambda t} m(t) \le \frac{2\int_0^{\infty}e^{-\lambda s}g(s) ds}{\int_0^{\infty}s e^{-\lambda s} r(s) ds} < \infty.
\end{equation}
Finally, we will translate the above bound for the root degree in the original discrete time network $\{\TT(n): n \ge 1\}$. Note that, by~\eqref{disctocon}, $\{M(\rho, n) : n \ge 1\} \stackrel{d}{=} \{\rho(\tau_n) : n \ge 1\}$. Therefore, for $A,B>0$,
\begin{align*}
\prob(M(\rho,n) > A n^{\lambda}) &\le \prob(\tau_n - \log n \ge B) + \prob\left(e^{-\lambda (\log n + B)} \rho(\log n + B) > A e^{-\lambda B}\right)\\
& =\prob(Y(\log n + B) \le n) + \prob\left(e^{-\lambda (\log n + B)} \rho(\log n + B) > A e^{-\lambda B}\right)\\
&\le ne^{-(\log n + B)} + \frac{e^{\lambda B}}{A}\E\left[e^{-\lambda (\log n + B)} \rho(\log n + B)\right].
\end{align*}
Hence, from~\eqref{mbdct},
$$
\limsup_{n \to \infty} \prob(M(\rho,n) > A n^{\lambda}) \le e^{-B} + m^* \frac{e^{\lambda B}}{A}.
$$
The upper bound in the theorem now follows upon choosing $B = \log A/(\lambda + 1)$. \qed

\subsection{Lower bound in Theorem~\ref{thm:macro-max-deg}}\label{lbsec} 
For the lower bound, we construct a coupling between $\TT_c(\cdot)$ and the edge branching process described in Section~\ref{edgedesc}. We denote the root of $\TT_c$ by $\rho$. Recall the Ulam-Harris set $\mathcal{U} := \cup_{i=0}^{\infty} \mathbb{N}^i$, where $\mathbb{N}_0 := \{\rho\}$. To describe the coupling, we will view the vertices in any growing tree process $\mathcalb{T} := \{\mathcalb{T}(t) : t \ge 0\}$ (with edges directed from child to parent) as $\mathcal{U} \times \mathbb{R}_+$-valued objects where the first coordinate encodes genealogical information and the second coordinate encodes birth time. For vertex $v \in \mathcalb{T}$, denote by $e(v)$ the {\bf unique} outgoing edge from $v$. For $v_1, v_2 \in \mathcalb{T}$, we will write $v_1 \rightarrow v_2$ to denote that $v_1$ is a child of $v_2$, and $v_1\uparrow e(v_2)$ to denote $v_1$ descends from $e(v_2)$ (via a possible chain of ancestral edge reproductions). 
As before, $\E_v$ and $\prob_v$ will respectively denote the conditional expectation and probability given the history of the network observed up to the birth time of vertex $v$.

By~\eqref{couplingdyn}, there is an independent rate one Poisson process $N_v$ associated with each $v \in \TT_c$ encoding the times at which the edge $e(v)$ gives birth. 
For $v \in \TT_c \setminus \{\rho\}$, writing $\tau_v$ for the birth time of $v$ and $i(v)$ for the chronological index of $v$, define the event
$$
\mathcal{A}_v := \left\lbrace N_v\left(\int_0^t\hat r_{i(v)}(s)ds\right) = N_v\left(\int_0^tr(s-\tau_v)ds\right) \ \forall t \ge \tau_v\right\rbrace.
$$
Recall that the $l$-th child of the edge $e(v)$ in $\TT_c$ chooses to attach to $v$ or its parent vertex with equal chance. We denote the associated Bernoulli random variable as $B_{v,l}$.

Fix $M>0$. Intuitively we will think of $M$ large but fixed so that various functionals involved in the coupling have ``stabilized' in $\cT_c$ by the time it reaches time $M$ and one can potentially run two different coupled processes, one having dynamics modulated by the probabilistic rules in $\BP_{\Ma}$ while the other run via the dynamics of $\cT_c$, such that miscouplings can be handled.\\ 

\noindent\textbf{The coupling: }Let us now be more precise. The coupling involves the construction of a growing tree process $\TT_{M,c}(\cdot)$ which attempts to retain the maximal amount of genealogical and temporal information from $\TT_c$ while keeping the law of the reproduction point process of each edge $e(v)$ born after time $M$ the same as that in $\BP_{\Ma,\ve}$. In the following, we say a vertex $v \in \TT_{M,c}$ is \emph{copied} from $\TT_c$ if there exist vertices in $\TT_c$ with the same values in $\mathcal{U} \times \mathbb{R}_+$ as $v$ and its ancestors in $\TT_{M,c}$. In this case, we will also use $v$ to denote the corresponding vertex in $\TT_c$. Let $v_M$ denote the first vertex born into $\TT_{c}$ after time $M$ that attaches to the root $\rho$.
The construction is now described as follows:
\begin{enumeratei}
\item $\TT_{M,c}(s) = \TT_c(s)$ for all $s \in [0,\tau_{v_M})$, that is, all vertices born into $\TT_c$ on this interval, along with their birth times and edges spanning them, are copied from $\TT_c$ onto $\TT_{M,c}$. Moreover, copy all the vertices that are produced by reproduction of these edges.
\item For each vertex $v \in \TT_{M,c}$ with birth time $\tau^M_v \ge \tau_{v_M}$, the edge $e(v)$ reproduces at times given by the epochs of $\{N_v\left(\int_0^tr(s-\tau^M_v)ds\right) : t \ge \tau^M_v\}$ and the newly born edges (equivalently, vertices) connect to $v$ or its parent vertex with equal chance. Moreover, if $v$ is copied, the $l$-th newly born edge (equivalently, vertex) in the above point process connects to $v$ or its parent according to the value of the same Bernoulli variable $B_{v,l}$ governing the corresponding attachment in $\TT_c$.  
\end{enumeratei}
In other words, all children of vertices born before time $M$ in $\TT_c$ are copied onto $\TT_{M,c}$ and those born after time $M$ reproduce in a coupled way attempting to maximize the number of copied children on any compact time interval.

Observe that if $v \in \TT_{M,c}$ is copied and $\mathcal{A}_v$ holds, then all the vertices originating from the reproduction of $e(v)$ are copied as well.\\

\noindent\textbf{Quantifying number of miscoupled vertices: }
Now, we will quantify the number of vertices in $\TT_{M,c}$ (born by a given time) which descend from $e(v_M)$, and which attach to the root $\rho$ in $\TT_{M,c}$ and have not been copied. We will call such vertices `miscoupled'. Define the following for $t \ge 0$:
\begin{align*}
Z_{M,v}(t) &:= \#\{u \in \TT_{M,c}: \, u \uparrow e(v), \, u \rightarrow \rho \text{ and } \tau_u \le t\}, \ v \in \TT_{M,c}, \ v \rightarrow \rho,\\
C_M(t) &:= \{u \in \TT_{M,c} \textbf{ copied}: \, u \uparrow e(v_M), \, u \rightarrow \rho \text{ and } \tau_u \le t\},\\
S_M(t) &:= \{u \in C_M(t) \cup \{v_M\} : \mathcal{A}_u^c \text{ holds}\}.
\end{align*}

In words, $Z_{M,v}(t)$ denotes the number of vertices in $\TT_{M,c}$ born up to time $t$, descending from the edge $e(v)$, which attach to the root. $C_M(t)$ denotes the number of \textbf{copied} vertices in $\TT_{M,c}$ born up to time $t$, descending from the edge $e(v_M)$, which attach to the root.
$S_M(t)$ is the set of `bad' vertices in $C_M(t) \cup\{v_M\}$ in the sense that at least one of their children is not copied.

Observe that every vertex born in $[0,t]$ that is not copied descends from a vertex in $S_M(t)$. Hence, the number of miscoupled vertices in $\TT_{M,c}$ with birth times in $[0,t]$ is upper bounded by
$$
\Mis(t) := \sum_{v \in S_M(t)}Z_{M,v}(t).
$$
Our proof of the root degree lower bound relies on the fact
\begin{equation}\label{discrepancy}
\rho(t) \ge |C_M(t)| \ge Z_{M,v_M}(t) - \Mis(t), \ t \ge \tau_{v_M}.
\end{equation}
We will quantify $\Mis(t)$ and show that, with high probability, it is much smaller than $Z_{M,v_M}(t)$ for large $M$. This is achieved by the following lemma.
\begin{lemma}\label{misclem}
Assume $\eta$ has a density $f_\eta$ such that $\sup_{u \ge 0} e^u f_\eta(u) < \infty$, and there exists $\gamma>1$ such that $\E(e^{\gamma \eta}) < \infty$. There exists $\phi \in (\lambda, 1)$, $C,\delta, M_0>0$ such that for all $M \ge M_0$ and $t \ge M$,
$$
\prob\left[\Mis(t) \ge e^{-\phi M}e^{\lambda t}\right] \le C e^{-\delta M}.
$$
\end{lemma}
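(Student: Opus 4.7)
The plan is to prove $\E[\Mis(t)]\le Ce^{-\phi'M}e^{\lambda t}$ for some $\phi'>\lambda$, then apply Markov's inequality with any $\phi\in(\lambda,\phi')$ to deduce the desired tail bound with $\delta=\phi'-\phi$. The bound on $\E[\Mis(t)]$ will leverage the decomposition $\Mis(t)=\sum_{u\in S_M(t)}Z_{M,u}(t)$ and separately handle (i) the root-attached subtree growth $Z_{M,u}(t)$ via the edge branching process construction of Section~\ref{edgedesc}, and (ii) the coupling-failure statistic encoded by $S_M(t)$ via the rate-discrepancy estimates of Lemma~\ref{ratecomp}, combining them through Cauchy--Schwarz.

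For (i), by the construction of $\TT_{M,c}$, each copied $u\in C_M(t)$ with $u\to\rho$ seeds a root-attached subtree whose growth matches the root degree of an edge branching process $\BP_{\Ma,\ve}$ started at $e(u)$ at time $\tau_u$; Proposition~\ref{prop:edge-equiv-bp} and Lemma~\ref{lem:bpe-moment}(a) give $\E[Z_{M,u}(t)\mid\mathcal{F}_{\tau_u}]\le Ce^{\lambda(t-\tau_u)}$, while the hypothesis $\E(e^{\gamma\eta})<\infty$ for every $\gamma>0$ (from the upper bound of Theorem~\ref{thm:macro-max-deg}) lets me verify Lemma~\ref{lem:bpe-moment}(b)'s moment condition for some $l>2$, yielding the matching second-moment estimate $\E[Z_{M,u}(t)^2\mid\mathcal{F}_{\tau_u}]\le C'e^{2\lambda(t-\tau_u)}$. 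For (ii), since $\tau_u\ge\tau_{v_M}\ge M$, the event $\mathcal{B}_M:=\{Y(M)\ge ce^M\}$ holds with probability at least $1-e^{-\delta_0M}$ by Lemma~\ref{Yuleest}(i), and on $\mathcal{B}_M$ every $u\in C_M(t)\cup\{v_M\}$ satisfies $i(u)\ge ce^M$. The first bound of Lemma~\ref{ratecomp} applied with $\gamma$ arbitrarily large then controls the expected number of Poisson points of $N_u$ in the symmetric-difference range of the two time-change integrals (the relevant quantity driving miscouplings of children of $e(u)$) by $Ce^{-c'M}$, which I plan to translate into $\prob(\mathcal{A}_u^c\mid\mathcal{F}_{\tau_u})\le Ce^{-c'M}$ on $\mathcal{B}_M$.

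Cauchy--Schwarz then gives $\E[\ind(\mathcal{A}_u^c)Z_{M,u}(t)\mid\mathcal{F}_{\tau_u}]\le Ce^{-c'M/2}e^{\lambda(t-\tau_u)}$ on $\mathcal{B}_M$. To sum over $u\in C_M(t)\cup\{v_M\}$, I will use a Malthusian martingale argument: letting $R(s)$ count root-attached descendants of $e(v_M)$ in $\TT_{M,c}$ by time $s$, Lemma~\ref{lem:bpe-moment}(a) applied to the subtree rooted at $e(v_M)$ gives $\E\bigl[\sum_{u}e^{\lambda(t-\tau_u)}\mid\mathcal{F}_{\tau_{v_M}}\bigr]\le C'e^{\lambda(t-\tau_{v_M})}$. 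Using $\tau_{v_M}\ge M$ and absorbing the contribution from $\mathcal{B}_M^c$ via a crude deterministic estimate on total tree size (bounded by the Yule process $Y(t)$) combined with $\prob(\mathcal{B}_M^c)\le e^{-\delta_0M}$, I obtain $\E[\Mis(t)]\le Ce^{-(c'/2+\lambda)M}e^{\lambda t}$; Markov's inequality with $\phi\in(\lambda,c'/2+\lambda)$ then closes the argument.

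The main technical hurdle is the passage from rate discrepancy to the coupling-failure probability $\prob(\mathcal{A}_u^c\mid\mathcal{F}_{\tau_u})$: the event is defined over the infinite horizon $[\tau_u,\infty)$ on which the integrated rate difference may accumulate, so a naive symmetric-difference bound on the shared Poisson process $N_u$ would fail. The resolution will be to observe that only miscouplings impacting root-attached vertices born by time $t$ affect $\Mis(t)$, so the relevant event can effectively be restricted to a finite horizon $[\tau_u,t]$; on $\mathcal{B}_M$, the decay in $i(u)$ delivered by the first bound of Lemma~\ref{ratecomp} then tames the rate-difference integral. A secondary difficulty is the Malthusian-martingale bound on $\sum_{u}e^{\lambda(t-\tau_u)}$: establishing uniform integrability of the root-attached Malthusian martingale requires the $\hat{\mvzeta}\log^+\hat{\mvzeta}$ condition already encoded in Lemma~\ref{lem:bpe-moment}, together with care in handling the immigration of new edges into $\rho$'s degree process via the Poisson point process $\chi$ of Proposition~\ref{prop:edge-equiv-bp}.
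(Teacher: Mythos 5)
Your high-level strategy — decompose $\Mis(t)=\sum_{u\in S_M(t)}Z_{M,u}(t)$, combine the rate-discrepancy estimates of Lemma~\ref{ratecomp} with second-moment control of $Z_{M,u}(t)$ via Cauchy--Schwarz, pass through a high-probability Yule-process event, and close by Markov — matches the paper's structure. However, two steps as you describe them do not close, and a third uses a hypothesis the lemma does not grant.

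First, the top-level plan to bound $\E[\Mis(t)]$ unconditionally cannot be salvaged by the ``crude deterministic estimate'' you propose on $\mathcal{B}_M^c$. You invoke $\Mis(t)\le Y(t)$ and $\prob(\mathcal{B}_M^c)\le e^{-\delta_0 M}$; by Cauchy--Schwarz this produces a term of order $\sqrt{\E[Y(t)^2]\,\prob(\mathcal{B}_M^c)}\asymp e^{t}e^{-\delta_0 M/2}$, which dwarfs the target $e^{-\phi'M}e^{\lambda t}$ whenever $t$ is large relative to $M$, because $\lambda<1$. This is precisely why the paper never bounds $\E[\Mis(t)]$ itself: it bounds $\prob\left[\Mis(t)\ge e^{-\phi M}e^{\lambda t},\,\mathcal{I}_{M,\epsilon}\right]$ by Markov and adds $\prob\left[\mathcal{I}_{M,\epsilon}^c\right]$ separately, so the ``bad'' event only ever contributes its own probability, never multiplied by a quantity that grows in $t$ at rate $1$.

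Second, your event $\mathcal{B}_M=\{Y(M)\ge ce^M\}$ is too weak. It gives $i(u)\ge ce^M$ for every relevant $u$, but the integrated rate discrepancy over $[\tau_u,t]$ contains the term $\int_{\tau_u}^t e^{-C_2 i(u)}e^{\frac{\gamma-1}{2\gamma}(s-\tau_u)}\,ds$ from the first bound of Lemma~\ref{ratecomp}, which blows up in $t-\tau_u$ unless one either (a) has $i(u)\ge e^{\tau_u/(1+\epsilon)}$, i.e., control of $Y$ at the random time $\tau_u$ and not just at $M$ — this is exactly the paper's event $\mathcal{I}_{M,\epsilon}$ via Lemma~\ref{Yuleupdown}, and its proof uses a maximal inequality for the Yule martingale — or (b) switches to the second bound of Lemma~\ref{ratecomp} on $[\tau_u+\delta M,t]$, a split you do not describe. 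Taking ``$\gamma$ arbitrarily large,'' as you propose, does not help: the offending exponent $\frac{\gamma-1}{2\gamma}\to\frac12$, so the blow-up survives. Finally, ``$\gamma$ arbitrarily large'' is not available here: the lemma assumes only that \emph{some} $\gamma>1$ has $\E(e^{\gamma\eta})<\infty$ (the ``all $\gamma$'' hypothesis belongs to part (a) of Theorem~\ref{thm:macro-max-deg}, not to this lemma, which feeds into the lower bound of part (b)). The paper's proof works with a single fixed $\gamma>1$.
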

The proof of the above lemma involves viewing $Z_{M,v}(t)$, for $v \in C_M(t) \cup \{v_M\}$, as the size of the branching process $\BP^\circ_{\ve}(\cdot)$ introduced in Section~\ref{edgedesc} run up till time $t - \tau_v$. An individual $u$ in this branching process can be marked as either `good' or `bad' depending on whether or not $\mathcal{A}_u$ holds. Quantitative estimates on $\prob_u(\mathcal{A}_u^c)$, derived from Lemma~\ref{ratecomp}, are then used to bound the number of bad individuals via renewal theory.

We will need the following estimate for the Yule process $Y(\cdot)$ whose proof is given in Appendix~\ref{app}.
\begin{lemma}\label{Yuleupdown}
    For any $\delta \in (0,1/2)$, there exist positive constants $C_1, C_2,M_0$ depending on $\delta$ such that for all $M \ge M_0$,
    \begin{align*}
        \prob\left(e^{-s}Y(s) \le e^{-\delta s} \text{ for some } s \ge M\right) \le C_1e^{-C_2M}.
    \end{align*}
\end{lemma}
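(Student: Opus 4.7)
My plan exploits two standard facts about the rate-one Yule process $Y(\cdot)$ started at $1$: (i) $Y$ is a counting process, hence non-decreasing in time; and (ii) $Y(t) \sim \operatorname{Geom}(e^{-t})$, so that $\prob(Y(t) \le y) = 1 - (1-e^{-t})^{\lfloor y \rfloor} \le y e^{-t}$ for every $y \ge 1$, using the elementary inequality $1-(1-x)^n \le nx$ for $x \in [0,1]$ and $n \in \mathbb{N}$.

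The first step is to discretize $[M,\infty)$ into unit blocks $I_k := [M+k, M+k+1)$ for $k \ge 0$ and to set
\[
A_k := \{\exists\, s \in I_k : e^{-s}Y(s) \le e^{-\delta s}\} = \{\exists\, s \in I_k : Y(s) \le e^{(1-\delta)s}\}.
\]
Monotonicity of $Y$ then yields that any $s \in I_k$ with $Y(s) \le e^{(1-\delta)s}$ must satisfy $Y(M+k) \le Y(s) \le e^{(1-\delta)(M+k+1)}$, so $A_k \subseteq \{Y(M+k) \le e^{(1-\delta)(M+k+1)}\}$. This is the only genuinely substantive step in the argument: it converts what is a priori an uncountable-infimum event into a pointwise evaluation of $Y$ at a single deterministic time, so that the explicit marginal distribution in (ii) becomes directly applicable.

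The second step applies fact (ii) with $t = M+k$ and $y = e^{(1-\delta)(M+k+1)}$ to obtain
\[
\prob(A_k) \le e^{(1-\delta)(M+k+1) - (M+k)} = e^{1-\delta}\, e^{-\delta(M+k)}.
\]
A union bound and a geometric summation then give
\[
\prob\Big(\bigcup_{k \ge 0} A_k\Big) \le \frac{e^{1-\delta}}{1-e^{-\delta}}\, e^{-\delta M},
\]
which is the asserted bound with $C_2 = \delta$ and $C_1 = e^{1-\delta}/(1-e^{-\delta})$ (the threshold $M_0$ being chosen large enough to absorb any slack in the first few terms if needed). Incidentally, the argument goes through for every $\delta \in (0,1)$, so the restriction $\delta < 1/2$ in the statement appears to be imposed by the way the lemma is invoked in Lemma~\ref{misclem} rather than by any obstruction in the proof itself.
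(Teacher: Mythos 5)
Your proof is correct, and it takes a genuinely different and more elementary route than the paper's. The paper proves the bound by treating $\mathcal{M}(t)=e^{-t}Y(t)$ as an $L^2$-bounded martingale, computing its quadratic variation, invoking Doob's $L^2$-maximal inequality to control $\sup_{s\ge t}|\mathcal{M}(s)-W|$ over unit blocks, and then combining this with a lower-tail estimate on the a.s.\ limit $W\sim\exp(1)$; the two error terms there scale like $e^{-(1-2\delta)M}$ and $e^{-\delta M}$, which is precisely why the paper's statement has the restriction $\delta<1/2$. You instead sidestep the martingale and the limit variable $W$ entirely: monotonicity of the counting process $Y$ converts the supremum over a unit block into a pointwise event at the left endpoint, and then the explicit $\operatorname{Geom}(e^{-t})$ marginal (stated in Lemma~\ref{Yuleest}(i)) gives $\prob(Y(t)\le y)\le ye^{-t}$ directly. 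As you observe, this yields the conclusion with $C_2=\delta$ for every $\delta\in(0,1)$, so your argument is both simpler and slightly stronger than what the paper needs; the $\delta<1/2$ restriction is indeed an artifact of the Doob-based route and not intrinsic to the statement. Your exponent $C_2=\delta$ also matches the second (dominant, for $\delta<1/2$) term in the paper's bound, so the two proofs give quantitatively comparable results in the stated regime.
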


\begin{proof}[Proof of Lemma~\ref{misclem}]
For $\epsilon \in (0,1)$, define the subset of vertices
\begin{align*}
\mathcal{S}_{M,\epsilon} := \{v \in \TT_{M,c}  \text{ copied and } 
\tau_v \le (1+\epsilon)\log i(v)\}
\end{align*}
and the event
$$
\mathcal{I}_{M,\epsilon} := \{v \in \mathcal{S}_{M,\epsilon} \text{ whenever } v \in \TT_{M,c} \text{ is copied and } \tau_v \ge M\}.
$$
For any $\phi \in (\lambda,1)$,
\begin{multline}\label{misbd}
\prob\left[\Mis(t) \ge e^{-\phi M}e^{\lambda t}, \, \mathcal{I}_{M,\epsilon}\right] \le e^{\phi M -\lambda t} \E\left[\sum_{v \in C_M(t) \cup\{v_M\}} \ind\set{\mathcal{A}_v^c \cap \{v \in \mathcal{S}_{M,\epsilon}\}} Z_{M,v}(t)\right]\\
 \le e^{\phi M -\lambda t}\E\left[\sum_{v \in C_M(t)\cup\{v_M\}} \ind\set{v \in \mathcal{S}_{M,\epsilon}} \left(\prob_v(\mathcal{A}_v^c)\right)^{1/2} \left(\E_{v}\left(Z_{M,v}(t)^2\right)\right)^{1/2}\right].
\end{multline}
By Markov's inequality and Lemma~\ref{ratecomp} (this time requiring both bounds), for any $\delta>0$,
\begin{align*}
    \prob_v(\mathcal{A}_v^c) &\le \mathbb{E}_v\left[\int_{\tau_v}^t\left|\hat r_{i(v)}(s) - r(s - \tau_v)\right|ds\right]\\
    &\le \int_{\tau_v}^{(1+ \delta)\tau_v}C_1\left(i(v)^{-\frac{\gamma-1}{2\gamma}} e^{-C_2 (s-\tau_v)} + e^{-C_2 i(v)} e^{\frac{\gamma-1}{2\gamma}(s-\tau_v)}\right)ds\\
    &\quad + \int_{(1+ \delta)\tau_v}^t C_1\left(e^{-(\gamma-1)(s-\tau_v)/2} + e^{-(s-\tau_v)/2}\right)ds\\
    &\le C_1'\left(i(v)^{-\frac{\gamma-1}{2\gamma}} + e^{-C_2 i(v)} e^{\frac{(\gamma-1)\delta}{2\gamma}\tau_v} + e^{-C_3\delta \tau_v}\right),
\end{align*}
where the constants do not depend on $\delta$. Hence, on the event $\{v \in \mathcal{S}_{M,\epsilon}\}$,
\begin{equation}\label{aprob}
    \prob_v(\mathcal{A}_v^c) \le C_1'\left(e^{-\frac{\gamma - 1}{2\gamma(1 + \epsilon)}\tau_v} + e^{-C_2e^{\tau_v/(1+\epsilon)}}e^{\frac{(\gamma-1)\delta}{2\gamma}\tau_v} + e^{-C_3\delta \tau_v}\right)\le C e^{-C'\tau_v}
\end{equation}
on choosing and fixing $\delta = \delta(\epsilon)$ sufficiently small (the constants $C,C'$ depend on $\epsilon$).

Moreover, by the assumed hypothesis $\E(e^{\eta}) < \infty$, $\E\left(\left[\zeta_{\ve}(t)\right]^2\right) \le C t^2$ for all $t \ge 0$. From this and Jensen's inequality, we conclude that $\E\left(\left[\hat \zeta_{\ve}(\lambda)\right]^2\right) < \infty$ which, by Lemma~\ref{lem:bpe-moment}(b), gives
\begin{equation}\label{zless}
\E_{v}\left(Z_{M,v}(t)^2\right) \le C e^{2\lambda (t - \tau_v)}.
\end{equation}
Using~\eqref{aprob} and~\eqref{zless} in~\eqref{misbd}, we get
\begin{align*}
\prob\left[\Mis(t) \ge e^{-\phi M}e^{\lambda t}, \, \mathcal{I}_{M,\epsilon}\right] &\le C e^{\phi M -\lambda t} \E\left[\sum_{v \in C_M(t)\cup\{v_M\}} \ind\set{v \in \mathcal{S}_{M,\epsilon}} e^{-C'\tau_v}e^{\lambda (t - \tau_v)}\right]\nonumber\\
&\le C e^{\phi M}\E\left[\sum_{v \in C_M(t)\cup\{v_M\}} e^{-(\lambda + C')\tau_v}\right].
\end{align*}
Now, as vertices in $C_M(t)$ descend from $e(v_M)$, the above expectation can be bounded in terms of the birth times $\{\tau^\circ_v : v \in \BP^\circ_{\ve}(t)\}$, where $\BP^\circ_{\ve}(\cdot)$ is the edge branching process determining root degree that was introduced in Section~\ref{edgedesc}. Hence, we obtain
\begin{align}\label{mis1}
\prob\left[\Mis(t) \ge e^{-\phi M}e^{\lambda t}, \, \mathcal{I}_{M,\epsilon}\right] \le e^{-(\lambda + C' - \phi)M}\E\left[\sum_{v \in \BP^\circ_{\ve}(t)} e^{-(\lambda + C')\tau^\circ_v}\right].
\end{align}
Writing $m^\circ(t) := \E\left[\sum_{v \in \BP^\circ_{\ve}(t)} e^{-(\lambda + C')\tau^\circ_v}\right]$, note that $m^\circ$ satisfies the following renewal-type equation
\begin{equation*}
m^\circ(t) = 1 + \int_0^te^{-(\lambda+ C')s} m^\circ(t-s)\mu_{\ve}(ds).
\end{equation*}
As $\int_0^\infty e^{-\lambda s}\mu_{\ve}(ds) = 1$ and $\mu_{\ve}(0,\infty)>0$, we conclude
$
\hat{\gamma} := \int_0^\infty e^{-(\lambda + C')s}\mu_{\ve}(ds) <1
$
and hence 
$$
\sup_{t \ge 0}m^\circ(t) = \frac{1}{1-\hat\gamma} < \infty.
$$
Using this in~\eqref{mis1} gives
\begin{equation}\label{mis2}
\prob\left[\Mis(t) \ge e^{-\phi M}e^{\lambda t}, \, \mathcal{I}_{M,\epsilon}\right] \le \frac{C}{1-\hat\gamma} e^{-(\lambda + C' - \phi)M}.
\end{equation}
Now, we bound $\prob[\mathcal{I}_{M,\epsilon}^c]$. Note that, if for some $v \in \TT_{M,c}$ copied with $\tau_v \ge M$, we have $\tau_v \ge (1 + \epsilon)\log i(v)$, then
$$
e^{-\tau_v}Y(\tau_v) = e^{-\tau_v} i(v) \le e^{-\epsilon \tau_v/(1+ \epsilon)}.
$$

By Lemma~\ref{Yuleupdown}, choosing and fixing $\epsilon \in (0,1)$, we thus obtain for $  \ge M_1$,
\begin{equation}\label{mis3}
\prob[\mathcal{I}_{M,\epsilon}^c] \le C_1e^{-C_2M}.
\end{equation}
The lemma follows from~\eqref{mis2} and~\eqref{mis3} upon choosing any $\phi \in (\lambda, \lambda + C')$.
\end{proof}
\textbf{Completing the proof of lower bound in Theorem~\ref{thm:macro-max-deg}:}
All the constants $C,C',\dots$ appearing in the proof are independent of $M$. By Lemma~\ref{lem:bpe-moment}(b),
\begin{equation*}
e^{-\lambda(t-\tau_{v_M})}Z_{M,v_M}(t) \stackrel{a.s.}{\rightarrow} W_{M,\ve}, \text{ as } t \to \infty
\end{equation*}
where $W_{M,\ve} \stackrel{d}{=} W_{\ve}$ is a strictly positive finite random variable. 

Choose and fix $\theta>0$ such that $\lambda(1+ \theta) < \phi$, where $\phi \in (\lambda,1)$ is as in Lemma~\ref{misclem}. As new vertices are added in $\TT_c$ according to a Yule process $Y(\cdot)$ and a new vertex arriving at time $t$ attaches to the root $\rho$ with probability at least $1/(2Y(t-))$, it follows from this that the law of $\tau_{v_M} - M$ is stochastically dominated by $\exp(1/2)$. Hence,
\begin{align*}
    \prob\left(\tau_{v_M} \ge (1+ \theta)M\right) \le \prob(\exp(1/2) \ge \theta M) \le e^{-\theta M/2}.
\end{align*}

From these observations and Fatou's lemma, we conclude that for any $x>0$,
\begin{align*}
    \limsup_{t \to \infty}\prob\left(e^{-\lambda t}Z_{M,v_M}(t) \le e^{-\lambda(1 + \theta)M}x\right) &\le e^{-\theta M/2} + \limsup_{t \to \infty}\prob\left(e^{-\lambda (t-\tau_{v_M})}Z_{M,v_M}(t) \le x\right)\nonumber\\
    &\le e^{-\theta M/2} + \prob\left(W_{\ve} \le x\right).
\end{align*}
Let $\sigma'_1$ denote the first arrival time in the Poisson point process $\mvzeta_{\ve}$ governing the edge branching process $\BP^\circ_{\ve}$. Then it is straightforward to check that $\E\left[e^{\lambda a \sigma'_1}\right] < \infty$ for any $a \in (0, \E[e^\eta]/\lambda)$. By using the same argument as in the proof of~\cite[Lemma 3.5]{banerjee2023degree}, we conclude that for any such $a$, there exists $C_a>0$ such that
$$
\prob\left(W_{\ve} \le x\right) \le C_a x^{a}, \, x>0.
$$
Choosing and fixing such an $a$, and setting $x=2e^{-(\phi - \lambda(1+ \theta))M}$, we thus have
\begin{equation}\label{lbfin1}
\limsup_{t \to \infty}\prob\left(e^{-\lambda t}Z_{M,v_M}(t) \le 2 e^{-\phi M}\right) \le C e^{-C'M}.
\end{equation}
Now, leveraging the key bound in~\eqref{discrepancy} and using~\eqref{lbfin1},
\begin{align}\label{lbfin2}
    &\limsup_{t \to \infty}\prob\left(e^{-\lambda t} \rho(t) < e^{-\phi M}\right)\nonumber\\
    &\le \limsup_{t \to \infty}\prob\left( e^{-\lambda t} \left(Z_{M,v_M}(t) - \Mis(t)\right) <  e^{-\phi M}\right)\nonumber\\
    &\le \limsup_{t \to \infty}\prob\left(e^{-\lambda t} Z_{M,v_M}(t) < 2 e^{-\phi M}\right) + \limsup_{t \to \infty}\prob\left( e^{-\lambda t} \Mis(t) \ge e^{-\phi M}\right)\nonumber\\
    &\le C e^{-C'M} + C e^{-\delta M} \le Ce^{-C''M},
\end{align}
for all $M \ge M_0$, where we have used Lemma~\ref{misclem} in the last step.

Finally, we translate the above result to a discrete-time lower bound on the root degree. Recall that $\{M(\rho,n) : n \ge 1\} \stackrel{d}{=}\{\rho(\tau_n) : n \ge 1\}$. Hence, using~\eqref{lbfin2}, we obtain for all $M \ge M_0$,
\begin{align*}
  &\limsup_{n \to \infty}\prob\left(n^{-\lambda}M(\rho,n) \le e^{-\phi(1 + \lambda) M}\right)\\
  &\qquad\le \limsup_{n \to \infty}\prob(\tau_n \le \log n - \phi M) + \limsup_{n \to \infty}\prob\left(n^{-\lambda}\rho(\log n - \phi M) \le e^{-\phi(1 + \lambda) M}\right)\\
  &\qquad= \limsup_{n \to \infty}\prob(Y(\log n - \phi M) \ge n) + \limsup_{n \to \infty}\prob\left(e^{-\lambda (\log n - \phi M)}\rho(\log n - \phi M) \le e^{-\phi M}\right)\\
  &\qquad\le e^{-\phi M} + Ce^{-C''M},
\end{align*}
where we have used the fact that $Y(t) \sim \operatorname{Geom}(e^{-t}), \, t \ge 0$, in the last inequality. The lower bound in Theorem~\ref{thm:macro-max-deg} follows from the above.

\section{Conclusion}
\label{sec:conc}

The main goal of this paper is to formulate models of network evolution that incorporate delay as well as preferential attachment and initiate the study of asymptotics of specific functionals. This class of models suggests a plethora of open directions, which we now describe. 
\begin{enumeratea}
    \item {\bf Mesoscopic regime: } This paper dealt with the macroscopic regime. The regime $\beta <1$ was studied in~\cite{BBDS04_meso} for not just linear but more general attachment functions.   Under regularity conditions on the attachment function $f$ and minor moment conditions on the delay distribution $\mu$,  {\bf irrespective} of the parameter $\gb < 1$ and delay distribution $\mu$, it is shown that the local weak limit of the entire graph {\bf are the same} as in the setting without delay.
    
    \item {\bf Non-{\it i.i.d.}~delays:} The paper states results for {\it i.i.d.}~delays, however if one looks at the proofs, many of the results should go through assuming a sequence of independent delays $\set{\xi_n:n\geq 1}$ that satisfy for example convergence of the empirical distribution to $\mu$ as $n\to\infty$. We leave such extensions (and limits thereof) to other interested researchers.

    \item {\bf Non-tree regime:} To keep this paper to a manageable length, we have dealt with the setting where the network stream is a collection of trees. All of the main results for both the meso and macro regimes should be extendable to the more general network setting where vertices enter with more than one edge that they recursively use to connect via probabilistic choices with information limited by delay. The no delay setting has witnessed significant interest over the last few years~\cite{garavaglia2020local,garavaglia2022universality,banerjee2023local}. 
    
    \item {\bf General attachment functions in the Macroscopic regime:} In the macroscopic regime, this paper only dealt with the affine linear attachment function, and this model itself led to non-trivial asymptotics, with even the root degree requiring detailed technical analysis. Understanding the macroscopic regime for general attachment functions and the types of condensation phenomenon in this more general setting is a worthy next endeavor. 
    
    \item {\bf General delay distributions in the Macroscopic regime:} In this regime, we largely considered two subclasses of distributions: 
    \begin{inparaenuma}
    \item {\bf Delay with mass at one:} In this case, each new vertex has a positive probability of attaching to the root, and thus, it is not surprising that the root degree grows like $\Theta_P(n)$ and there is condensation; perhaps the most surprising finding in this setting is that (in the linear attachment function setting), this is the only way to achieve condensation. 
    \item {\bf Transformed delay with finite exponential moments:} The other regime (Theorem~\ref{thm:non-conden} and its Corollaries) is the setting where the (transformed delay, namely $\eta$) has exponential moments. 
    \end{inparaenuma}
    All other regimes, especially settings where $\pr(\eta < \infty) =1$ but $\E(e^{\theta \eta}) = \infty$ for all $\theta >0$ are completely unexplored and suggest a fascinating plethora of directions. Extending the results on root degree asymptotics in the regimes considered in this paper to the maximal degree would be another interesting direction to pursue. 
\end{enumeratea}

\appendix
\section{Proofs of auxiliary lemmas}\label{app}

\begin{proof}[Proof of Lemma~\ref{lem:bigggg-lemma}]
We will prove the bounds for $\sup_{\mvx \in \cC}\left|\vP_{i,n}(\mvx)-\vP_{i+1,n}(\mvx)\right|$ for each $i=0,1,2$ separately.
\begin{enumeratei}
    \item {\bf{When $i=0$}:} Since $\left|\exp(-x) - (1-x)\right| \leq x^2$ for all $x\in \bR_+$ and using the observation that {$p_j^n(i,\mvx)\leq \frac{K+1 +\alpha}{2+\alpha}\frac{1}{n}$} for all $0 \le j \le K$, $i\in \bN$, we have from~\eqref{eqn:prob-Ex} and~\eqref{eqn:prob-ex-1},
    \begin{align*}
    &\left|\vP_{0,n}(\mvx) - \vP_{1,n}(\mvx) \right| \\
    &\leq \sum_{i\in \cI_{\mvx}}\left|\exp\left(-\sum\limits_{j=0}^K p_j^n(i,\mvx)\right) - \left(1-\sum\limits_{j=0}^K p_j^n(i,\mvx)\right)\right|\leq \sum_{i\in \cI_{\mvx}} \left(\sum\limits_{j=0}^K p_j^n(i,\mvx)\right)^2\\
    &\leq \sum_{i=n+1}^{N_{\mvx}} \left(\frac{(K+1)(K+1 + \alpha)}{(2+\alpha)n}\right)^2 = \frac{N_{\mvx}-n}{n^2}\left(\frac{(K+1)(K+ 1 + \alpha)}{(2+\alpha)}\right)^2,\end{align*}
    which proves the case $i=0$ upon noting that $N_{\mvx} \leq n+ \sum\limits_{j=0}^K \sum\limits_{k=1}^K x_{j,k}$.
\item {\bf{When $i=1$}:} For all $\mvx \in \cC$, since $T^n_{j,k}(\mvx) \geq n$, we have for all $i\in \bN$ 
\begin{align*}
    &\left|p_j^n(i,\mvx)-q_j^n(i,\mvx)\right|\\
    &\leq \frac{1}{2+\alpha}\left[\sum\limits_{k=0}^K\E\left(\frac{\ind\set{i(1-\xi) \geq T_{j,k}^n(\mvx)}}{[i(1-\xi)]^2}\right) + \alpha \E\left(\frac{\ind\set{i(1-\xi) \geq \rho_j^n(\mvx)}}{[i(1-\xi)]^2}\right)\right]\\
    &\leq \frac{K+ 1 +\alpha}{2+\alpha}\frac{1}{n^2}.
\end{align*}
Therefore, we have
\begin{align*}
&\left|\vP_{1,n}(\mvx) - \vP_{2,n}(\mvx) \right|\\
 &\leq \sum\limits_{i\in \cI_{\mvx}}\sum\limits_{j=0}^K\left|p_j^n(i,\mvx)-q_j^n(i,\mvx)\right|+\sum\limits_{j,k\leq K}\left|p_{j}^n\left(T_{j,k}^n(\mvx),\mvx\right)-q_{j}^n\left(T_{j,k}^n(\mvx),\mvx\right)\right|\\
&\leq \frac{(K+1)(K+1+\alpha)}{2+\alpha} \frac{N_{\mvx}-n}{n^2} + \frac{K(K+1)(K+1+\alpha)}{2+\alpha}\frac{1}{n^2}, 
    \end{align*}
    which proves the case $i=1$.

\item {\bf{When $i=2$}:} Note for that for any $N\in \bN$, we have \begin{align}\label{eqn:log-approximation}
    \sup\limits_{N \leq z_1\leq z_2 }\left|\sum_{i}\frac{\ind\set{z_1\leq i\leq z_2}}{i} - \log\left(\frac{z_2}{z_1}\right)
    \right| \leq \frac{5}{N}.
\end{align} 
{Moreover, by a simple change of variables formula, for any $0 \le k \le K$,
$$
H\left(\log\left(\frac{T_{j,K}^n(\mvx)}{T_{j,k}^n(\mvx)}\right)\right) = \E\left(\ind\set{(1-\xi)^{-1} \le \frac{T_{j,K}^n(\mvx)}{T_{j,k}^n(\mvx)}}(1-\xi)^{-1}\log\left(
\frac{T^n_{j,K}(\mvx)(1-\xi)}{T^n_{j,k}(\mvx)}\right)\right).
$$}
Hence, for any $0 \le j \le K,0 \le k\leq K$, using~\eqref{eqn:log-approximation} we have \begin{align*}
    &\left|\sum_{i=n}^{T^n_{j,K}(\mvx)} \E\left(\frac{1}{2+\alpha} \frac{\ind\set{i(1-\xi)\geq T_{j,k}^n(\mvx)}}{i(1-\xi)}\right) -H\left(\log\left(\frac{T_{j,K}^n(\mvx)}{T_{j,k}^n(\mvx)}\right)\right) \right|& \\
    &\leq \frac{1}{2+\alpha}\E\left(\frac{\ind\set{ T^n_{j,K}(\mvx)(1-\xi) \geq T^n_{j,k}(\mvx)}}{1-\xi}  \right. \\ 
    &{\hspace{1cm} \left. \cdot \left|\sum_{i=n}^{T^n_{j,K}(\mvx)}\frac{\ind\set{ T^n_{j,k}(\mvx)(1 - \xi)^{-1} \leq i \leq T^n_{j,K}(\mvx) }}{i} - \log\left(
\frac{T^n_{j,K}(\mvx)(1-\xi)}{T^n_{j,k}(\mvx)}\right)\right|\right)}\\
 &\leq  \frac{1}{2+\alpha}\frac{T^n_{j,K}(\mvx)}{T^n_{j,k}(\mvx)} \frac{5}{n} \leq  \frac{1}{2+\alpha}\frac{5N_{\mvx}}{n^2}.
\end{align*}
Thus,
    \begin{align*}
        \left|\vP_{2,n}(\mvx)-\vP_{3,n}(\mvx)\right|&\leq \sum_{j=0}^K\left|\sum_{i=n}^{T^n_{j,K}(\mvx)}q_j^n(i,\mvx) - r_j^n(\mvx)\right| {\leq \frac{C'K^2}{n}}
    \end{align*}
for some constant $C'>0$ depending only on $\alpha$, proving case $i=2$. 
\end{enumeratei}
This proves the lemma.
\end{proof}

{ 
\begin{proof}[Proof of Lemma~\ref{lem: invalid-set}]
\noindent \textbf{(1)} Observe that $\hat{\rho}^n_j(\mvy) \leq \hat{T}^n_{j,k}(\mvy) \leq \hat{\rho}^n_j(\mvy) \exp(\sum_{i=1}^k y_{j,i}).$ Moreover, we have $$\hat{T}^n_{j,k}(\mvy) - \hat{T}^n_{j,k-1}(\mvy) \geq \hat{\rho}^n_j(\mvy) \exp(\sum_{i=1}^k y_{j,i}) - \hat{\rho}^n_j(\mvy) \exp(\sum_{i=1}^{k-1} y_{j,i}) -1$$ for $1 \le k \le K$ (with convention $\exp(\sum_{i=1}^{0} y_{j,i}) =1$), which gives $\hat{T}^n_{j,k}(\mvy) \ge \hat{\rho}^n_j(\mvy) \exp(\sum_{i=1}^k y_{j,i}) - k$. Therefore $\hat{\tau}^n_{j,k}(\mvy) \leq L$ implies
    $$
    e^{y_{j,k}} - \frac{k}{n} \le e^{y_{j,k}} - \frac{k}{\hat{\rho}^n_j(\mvy) \exp(\sum_{i=1}^{k-1} y_{j,i})} \le e^L \ \Rightarrow \ y_{j,k} \le \log\left(e^L + K\right) \ \forall \ j,k \le K.
    $$
    Hence the support of the integrand on the right-hand side of the above display is contained in
    \[
    [0, \log\left(e^L + K\right)]^{K^2+K}.
    \]
Therefore, if  $\hat{\tau}_{1:K}^n(\mvy) \in \cS$, using ~\eqref{eqn:bound-for-dct}, and hence $\vP_{3,n}(G_n(\mvy)) \fJ_n(\mvy)$, is uniformly bounded (over $n$). This proves (1) of the lemma.

\vspace{0.4cm}
    
\noindent \textbf{(2)} Note that $G_n(\mvy)$ is not \emph{valid} if and only if in the $\log$-scale the arrival times of two vertices in the fringe of the vertex, $n$ are equal on the event $E_{\mvx}$. Observe that every arrival time in $\log$-scale can be expressed as a sum of some elements in $\hat{\tau}^n_{1:K}(\mvy) = (\hat{\tau}^n_{j,k}:0\leq j\leq K, 1\leq k\leq K)$. Let $S_1(\hat{\tau}^n_{1:K}(\mvy))$ and $S_2(\hat{\tau}^n_{1:K}(\mvy))$ denote two partial sums of subsets of coordinates in $\hat{\tau}^n_{1:K}(\mvy)$. We first show $$\lim\limits_{n\to \infty} \lambda(\set{\mvy:S_1(\hat{\tau}^n_{1:K}(\mvy)) = S_2(\hat{\tau}^n_{1:K}(\mvy))}\cap \cL) \to 0.$$ Observe that if $S_1(\hat{\tau}^n_{1:K}(\mvy)) = S_2(\hat{\tau}^n_{1:K}(\mvy))$, then \begin{align*}
        |S_1(\mvy) -  S_2(\mvy)| &=  |S_1(\hat{\tau}^n_{1:K}(\mvy)) - S_1(\mvy) + S_2(\mvy) - S_2(\hat{\tau}^n_{1:K}(\mvy))|\\
        &\leq C'' \sup_{j,k}|\hat{\tau}^n_{j,k}(\mvy) - y_{j,k}|
    \end{align*} for some constant $C''$ (depending on $K$), as $S_1$ and $S_2$ are linear functions of $\hat{\tau}^n_{j,k}$ and hence Lipschitz globally. Also, using the bounds on $\hat{T}^n_{j,k}(\mvy)$ obtained just before equation~\eqref{eqn:bound-for-dct}, we have $$\log\left(1-\frac{Ke^{-y_{j,k}}}{n}\right)\leq \hat{\tau}^n_{j,k}(\mvy) - y_{j,k} \leq \log\left(1+\frac{K}{n-K}\right). $$ Therefore, there exists a non-negative sequence $\set{c_n}_{n\geq 1}$ such that $c_n \to 0$ such that for all $\mvy \in \cL$, we have $$\sup_{j,k}|\hat{\tau}^n_{j,k}(\mvy) - y_{j,k}| \leq c_n.$$ Hence, \begin{align*}
        \lim\limits_{n\to \infty}\lambda(\set{\mvy:S_1(\hat{\tau}^n_{1:K}(\mvy)) = S_2(\hat{\tau}^n_{1:K}(\mvy))} \cap \cL ) &\leq \lim\limits_{n\to \infty}\lambda(\set{\mvy:|S_1(\mvy)-S_2(\mvy)| \leq C''c_n} \cap \cL)\\
        &= \lambda(\set{\mvy: S_1(\mvy) = S_2(\mvy)} \cap \cL) = 0. 
    \end{align*} Now, a union bound over all possible pairs of partial sums yields the result.
\end{proof}}

\begin{proof}[Proof of Lemma~\ref{Yuleest}]
    (i) follows from standard results on the Yule process, see~\cite[Section 2.5]{norris-mc-book}. To prove (ii), note that by using moment-generating functions and Markov's inequality, for any $\theta>0$,
    $$
    \prob_i\left(W_i < i x\right) \le e^{\theta i x}\frac{1}{(1+\theta)^i}, \ x \in [0,1).
    $$
    The claim follows by optimizing over $\theta$. Now, we prove (iii). Observe that
    $$
    \E_i \left| \log \frac{W_i}{i} \right| = \frac{1}{\Gamma(i)}\int_0^\infty \left| \log \frac{z}{i} \right| z^{i-1}e^{-z} dz = \frac{i^i}{\Gamma(i)}\int_{-\infty}^\infty \left| y \right| e^{i(y-e^y)} dy.
    $$
    Using the fact that $e^y \ge 1 + y + \frac{y^2}{2}$ and $e^{-y} \ge 1-y+\frac{y^2}{2} - \frac{y^3}{6}$ for all $y \ge 0$, we estimate the integrals as follows:
    \begin{align*}
        \frac{i^i}{\Gamma(i)}\int_0^{\infty} y e^{i(y-e^y)} dy \le  \frac{i^i}{\Gamma(i)}\int_0^{\infty} y e^{i(-1 -\frac{y^2}{2})} dy \le \frac{Ci^ie^{-i}}{i\Gamma(i)} \le \frac{C'}{\sqrt{i}}
    \end{align*}
    and
    \begin{align*}
        \frac{i^i}{\Gamma(i)}\int_{-\infty}^0 \left| y \right| e^{i(y-e^y)} dy &= \frac{i^i}{\Gamma(i)}\int_0^{\infty} y e^{i(-y-e^{-y})} dy\\
        &\le \frac{i^i}{\Gamma(i)}\int_0^1 y e^{i(-1-\frac{y^2}{2} + \frac{y^3}{6})} dy + \frac{i^i}{\Gamma(i)}\int_1^{\infty} y e^{-iy} dy\\
        &\le \frac{i^i}{\Gamma(i)}\int_0^1 y e^{i(-1-\frac{y^2}{3})} dy + \frac{i^i}{\Gamma(i)}\int_1^{\infty} y e^{-iy} dy
        \le \frac{Ci^ie^{-i}}{i\Gamma(i)} \le \frac{C'}{\sqrt{i}},
    \end{align*}
    where we have used Stirling's approximation to obtain the final bounds. The claim follows.
    
    To prove (iv), note that under $\prob_i$, for $t  > \tau_i$, $Y(t)$ is distributed as the sum of $i$ $\operatorname{Geom}(p)$ random variables, where $p=e^{-(t-\tau_i)}$. Using moment-generating functions and Markov's inequality, for any $\theta>0$,
    $$
    \prob_i\left(Y(t) \le ix\right) \le e^{\theta i x}\left(\frac{pe^{-\theta}}{1-e^{-\theta} + pe^{-\theta}}\right)^i, \ x \in [1,1/p).
    $$
    The bound above is minimized at $\theta = -\log\left((1-x^{-1})(1-p)^{-1}\right)$. Hence,
    $$
     \prob_i\left(Y(t) \le ix\right) \le p^ix^{ix}\left(\frac{1-p}{x-1}\right)^{i(x-1)}\le (px)^i, \ x \in [1,1/p).
    $$
    Using this bound for $x = 1/\sqrt{p}$, we obtain
    $$
    \prob_i\left(Y(t) \le ie^{-(t-\tau_i)/2}\right) \le e^{-i(t-\tau_i)/2}, \ t > \tau_i,
    $$
which implies the result.
\end{proof}

\begin{proof}[Proof of Lemma~\ref{Yuleupdown}]
    Recall from Lemma~\ref{Yuleest}(i) that $\{\mathcal{M}(t) := e^{-t}Y(t) : t \ge 0\}$ is a martingale. It's quadratic variation $\langle \mathcal{M}\rangle$ is given by
$$
\frac{d}{dt}\langle \mathcal{M}\rangle (t) = -2e^{-2t}Y(t)^2 + e^{-2t}\left((Y(t) + 1)^2 - Y(t)^2\right)Y(t) = e^{-2t} Y(t).
$$
Hence, by Doob's $L^2$-maximal inequality,
$$
\E\left[\sup_{s \ge t} \left| e^{-s}Y(s) - W\right|^2\right] \le C\E\left[\int_{t}^\infty e^{-2s}Y(s)ds\right] = C\int_{t}^\infty e^{-s}ds = Ce^{-t}, \ t \ge 0.
$$
Using this, we obtain for any $\delta \in (0,1/2)$,
\begin{align*}
&\prob\left(\left| e^{-s}Y(s) - W\right| \ge e^{-\delta s} \text{ for some } s \ge M\right)\\
&\qquad\le \sum_{l = \lfloor M \rfloor}^\infty \prob\left(\left| e^{-s}Y(s) - W\right| \ge e^{-\delta s} \text{ for some } s \in [l,l+1]\right)\\
&\qquad\le \sum_{l = \lfloor M \rfloor}^\infty \prob\left(\sup_{s \ge l}\left| e^{-s}Y(s) - W\right| \ge e^{-\delta (l+1)}\right)
 \le C\sum_{l = \lfloor M \rfloor}^\infty e^{2\delta (l+1)}e^{-l} \le C' e^{-(1- 2\delta)M},
\end{align*}
where $C'$ depends on $\delta$. Thus,
\begin{align*}
&\prob\left(e^{-s}Y(s) \le e^{-\delta s} \text{ for some } s \ge M\right)\\
&\qquad\le \prob\left(\left| e^{-s}Y(s) - W\right| \ge e^{-\delta s} \text{ for some } s \ge M\right) + \prob(W \le 2e^{-\delta M})\\
&\qquad \le C' e^{-(1- 2\delta)M} + C'' e^{-\delta M}
\end{align*}
for all sufficiently large $M$, where we used Lemma~\ref{Yuleest}(ii) in the last step. 
The lemma follows.
\end{proof}
\section*{Acknowledgments}
We thank Charles Cooper for writing the Python program to simulate the model and Prabhanka Deka for enlightening discussions and comments related to the paper. The simulations in Fig.~\ref{fig:sim} were plotted using the excellent Graph tools~\cite{peixoto_graph-tool_2014}. { We thank two referees for closely going through the paper which led to a significant improvement in the exposition and organization of the paper. }

\section*{Funding}
{ Banerjee was partially supported by the NSF CAREER award DMS-2141621. Bhamidi and Sakanaveeti were partially supported by NSF DMS-2113662, DMS-2413928, and DMS-2434559. Banerjee and Bhamidi were partially funded by NSF RTG grant DMS-2134107. Part of
this material is based upon work supported by the National Science Foundation under Grant
No. DMS-1928930, while Banerjee and  Bhamidi were in residence at the Simons Laufer
Mathematical Sciences Institute in Berkeley, California, during the Spring 2025 semester.}

\bibliographystyle{plainnat}
\bibliography{macro_refs}

\end{document}